\documentclass{amsart}
\usepackage{amsfonts,amssymb,amscd,amsmath,enumerate,verbatim,calc}
\usepackage[all]{xy}

\newcommand{\CM}{Cohen-Macaulay}

\newcommand{\wrt}{with respect to}

\newcommand{\Ic}{\mathcal{I} }

\newcommand{\n}{\mathfrak{n} }
\newcommand{\m}{\mathfrak{m} }

\newcommand{\Xb}{\mathbf{X}_\bullet}
\newcommand{\Zb}{\mathbf{Z}_\bullet}
\newcommand{\Yb}{\mathbf{Y}_\bullet}
\newcommand{\Ib}{\mathbf{I}_\bullet}
\newcommand{\Pb}{\mathbf{P}_\bullet}
\newcommand{\Qb}{\mathbf{Q}_\bullet}
\newcommand{\Ub}{\mathbf{U}_\bullet}
\newcommand{\Vb}{\mathbf{V}_\bullet}
\newcommand{\Wb}{\mathbf{W}_\bullet}
\newcommand{\Fb}{\mathbf{F}_\bullet}
\newcommand{\Gb}{\mathbf{G}_\bullet}
\newcommand{\Kb}{\mathbf{K}_\bullet}
\newcommand{\Db}{\mathbf{D}_\bullet}

\newcommand{\Rc}{\mathcal{R} }
\newcommand{\Tc}{\mathcal{T} }
\newcommand{\Uc}{\mathcal{U} }

\newcommand{\Fc}{\mathcal{F} }
\newcommand{\Gc}{\mathcal{G} }
\newcommand{\Cc}{\mathcal{C} }
\newcommand{\Kc}{\mathcal{K} }
\newcommand{\Sc}{\mathcal{S} }
\newcommand{\Dc}{\mathcal{D} }
\newcommand{\Ec}{\mathcal{E} }

\newcommand{\Z}{\mathbb{Z} }
\newcommand{\pb}{\mathbf{p}}
\newcommand{\ib}{\mathbf{i}}
\newcommand{\Ga}{\Gamma}
\newcommand{\rt}{\rightarrow}

\newcommand{\rad}{\operatorname{rad}}
\newcommand{\rank}{\operatorname{rank}}

\newcommand{\depth}{\operatorname{depth}}
\newcommand{\md}{\operatorname{mod}}
\newcommand{\thick}{\operatorname{thick}}

\newcommand{\width}{\operatorname{width}}

\newcommand{\cone}{\operatorname{cone}}

\newcommand{\proj}{\operatorname{proj}}

\newcommand{\Inj}{\operatorname{Inj}}

\newcommand{\Hom}{\operatorname{Hom}}
\newcommand{\cHom}{\mathcal{H}om}
\newcommand{\Ext}{\operatorname{Ext}}

\theoremstyle{plain}

\newtheorem{theorem}{Theorem}[section]
\newtheorem{corollary}[theorem]{Corollary}
\newtheorem{lemma}[theorem]{Lemma}
\newtheorem{proposition}[theorem]{Proposition}

\newtheorem{conjecture}[theorem]{Conjecture}

\theoremstyle{definition}
\newtheorem{definition}[theorem]{Definition}

\newtheorem{remark}[theorem]{Remark}

\theoremstyle{remark}

\numberwithin{equation}{section}
\begin{document}

\title{On a generalization of two results of Happel to commutative rings}
\author{Tony~J.~Puthenpurakal}
\date{\today}
\address{Department of Mathematics, IIT Bombay, Powai, Mumbai 400 076}

\email{tputhen@math.iitb.ac.in}
\subjclass{Primary  13D09,  16G70 ; Secondary 13H05, 13H10}
\keywords{bounded homotopy category, bounded derived category, Auslander-Reiten triangles}
 \begin{abstract}
 In this paper we extend two results of Happel to commutative rings. Let $(A,\m)$ be a commutative Noetherian local ring. Let $D^b_f(\md(A))$ be the bounded derived category of complexes of finitely generated modules over $A$ with finite length cohomology. We show $D^b_f(\md(A))$ has Auslander-Reiten(AR)-triangles if and only if $A$ is regular. Let $K^b_f(\proj A)$ be the homotopy category of finite complexes of finitely generated free $A$-modules with finite length cohomology. We show that if $A$ is complete and if $A$ is Gorenstein then $K^b_f(\proj A)$ has AR triangles. Conversely we show that if $K^b_f(\proj A)$ has AR triangles and if $A$ is \CM \ or if $\dim A = 1$ then $A$ is Gorenstein.
\end{abstract}
 \maketitle

\section{introduction}
Let $\Gamma$ be a finite dimensional algebra over a field $k$. Happel proved that its bounded derived category $D^b(\md(\Gamma))$ has Auslander-Reiten (AR) triangles if and only if $\Gamma$ has finite global dimension, see \cite[3.6]{happel-1}, \cite[1.5]{happel-2}. He also proved that $K^b(\proj \Gamma)$ the homotopy category of bounded complexes of finitely generated projective $\Gamma$-modules has right AR triangles if and only if $\Gamma$ is a Gorenstein algebra, see \cite[3.4]{happel-3}. As exemplified by Auslander that concepts in the study of representation theory of Artin algebras have natural analogues in study of maximal \CM \ modules over \CM \ local rings. In this paper we study natural analogues of Happel's results in the context of commutative Noetherian rings.

Let $(A,\m)$ be a commutative Noetherian local ring of dimension $d$.
Let \\  $C^b(\md (A))$ be the category of bounded complexes of finitely generated $A$-modules. Let   $K^b(\md (A))$ be the corresponding  homotopy category and let $D^b(\md (A)$ be the bounded derived category of $A$.
We write complexes cohomologically.
\[
\Xb \colon  \cdots \rt \Xb^i \xrightarrow{d^i} \Xb^{i+1} \xrightarrow{d^{i+1}} \Xb^{i+2}\rt \cdots
\]
with $d^{i+1} \circ d^i = 0$ for all $i$.

Let $C^b(\proj A)$   (and $C^{-,b}(\proj A)$) be   category of bounded complexes (respectively   bounded above with $H^*(\Xb)$ finitely generated for all $\Xb$) )
 of finitely generated free $A$-module and let $K^b(\proj A)$   (and $K^{-,b}(\proj A)$) be the corresponding homotopy categories.
  Recall that in general $D^b(\md A) \cong K^{-,b}(\proj A)$ and if $A$ is regular local then $D^b(\md A) \cong K^b(\md A)$.  Let $D^b_f(\md(A))$ be the bounded derived category of complexes of finitely generated modules over $A$ with finite length cohomology. Then it can be shown that $D^b_f(\md(A))$ is a Hom-finite Krull-Schmidt triangulated category, see \ref{tea}. Our generalization of Happel's results \cite[3.6]{happel-1}, \cite[1.5]{happel-2} is the following:
  \begin{theorem}\label{happel-gen}
  Let $(A,\m)$ be a commutative Noetherian local ring. The following conditions are equivalent:
  \begin{enumerate}[\rm (i)]
  \item
  $A$ is regular.
  \item
  $D^b_f(\md(A))$ has AR-triangles.
  \end{enumerate}
  \end{theorem}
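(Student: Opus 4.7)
The plan is to use the Reiten--Van den Bergh correspondence: for a Hom-finite Krull-Schmidt $k$-linear triangulated category $\Tc$, AR-triangles exist in $\Tc$ if and only if $\Tc$ admits a Serre functor. By~\ref{tea} this applies to $\Tc = D^b_f(\md(A))$. Write $D(V) = \Hom_{A/\m}(V, A/\m)$ for the $k$-vector space dual of a finite-dimensional $k$-space $V$, and $X^\vee = \mathbb{R}\Hom_A(X, E(A/\m))$ for the Matlis dual.

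For (i)$\Rightarrow$(ii), assume $A$ is regular of dimension $d$. I would show that the shift $[d]$ is a Serre functor on $D^b_f(\md(A))$, i.e., exhibit a bifunctorial isomorphism $\Hom(X,Y) \cong D\Hom(Y, X[d])$ for $X, Y \in D^b_f(\md(A))$. Two ingredients feed in: (a) every object of $D^b_f(\md(A))$ is perfect because $A$ has finite global dimension; and (b) local duality for the Gorenstein ring $A$ gives $\mathbb{R}\Hom_A(X, A) \cong X^\vee[-d]$ for $X$ with finite-length cohomology. Combining these,
\[ \mathbb{R}\Hom_A(X, Y) \;\cong\; \mathbb{R}\Hom_A(X, A) \otimes^{\mathbb{L}}_A Y \;\cong\; (X^\vee \otimes^{\mathbb{L}}_A Y)[-d], \]
so $\Hom(X, Y) \cong \Tor^A_d(X^\vee, Y)$. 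A parallel computation using the tensor-Hom adjunction $\mathbb{R}\Hom(Y, X^\vee) \cong (Y \otimes^{\mathbb{L}}_A X)^\vee$ together with Matlis biduality $X^{\vee\vee} \cong X$ (valid on $D^b_f$) yields $D\Ext^d(Y, X) \cong \Tor^A_d(X^\vee, Y)$. Comparison identifies $[d]$ as the Serre functor and so produces AR-triangles.

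For (ii)$\Rightarrow$(i), assume a Serre functor $S$ exists on $D^b_f(\md(A))$ and examine its action on $k = A/\m$. Serre duality gives, for every $n \in \Z$,
\[ \Ext^n_A(k, k) \;=\; \Hom_{D^b_f}(k, k[n]) \;\cong\; D\Hom_{D^b_f}(k, Sk[-n]). \]
Since $Sk \in D^b_f(\md(A))$, its cohomology is bounded, so there is an integer $a$ with $H^i(Sk) = 0$ for $i < a$. For $n > -a$ the complex $Sk[-n]$ has cohomology only in strictly positive degrees, so it is quasi-isomorphic to a complex concentrated in degrees $\geq 1$; any chain map from a projective resolution $P \to k$, which lies in degrees $\leq 0$, to such a representative is forced to be zero. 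Hence $\Hom_{D^b_f}(k, Sk[-n]) = 0$, so $\Ext^n_A(k, k) = 0$ for all $n > -a$. This forces $\projdim_A k < \infty$, and by Auslander--Buchsbaum--Serre $A$ is regular.

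The main obstacle is the forward direction: one has to identify the Serre functor explicitly and verify the naturality of the duality isomorphism on all of $D^b_f(\md(A))$, not just on modules, which requires perfectness together with a careful bookkeeping of the competing shifts and dualities. The reverse direction, once Reiten--Van den Bergh is in hand, reduces to a single Ext-vanishing argument driven by the boundedness of $Sk$.
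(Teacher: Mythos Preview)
Your forward direction (i)$\Rightarrow$(ii) follows the same strategy as the paper: invoke Reiten--Van den Bergh and exhibit a Serre functor. You identify it as $[d]$ directly via local duality, while the paper builds the composite $F=\pb\circ\Ec\circ\Dc$ and then checks density; on objects of the form $\Pb^M$ one computes $F(\Pb^M)\cong\Pb^M[d]$, so the two descriptions agree. One notational slip: your $D(-)=\Hom_{A/\m}(-,A/\m)$ is only defined on $k$-vector spaces, whereas the Hom-groups in $D^b_f(\md(A))$ are finite-length $A$-modules. You should use the Matlis dual $(-)^\vee=\Hom_A(-,E)$ throughout, as the paper does; your biduality step $X^{\vee\vee}\cong X$ already implicitly uses this.

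Your reverse direction (ii)$\Rightarrow$(i) is \emph{genuinely different} from the paper's, and considerably shorter. The paper does not pass through a Serre functor at all for this implication: it argues directly (Sections~7--8) that when $A$ is not regular, no right AR-triangle can end at the minimal resolution $\Xb$ of $k$. The mechanism is Tate's DG-algebra resolution of $k$ together with an ad hoc notion of ``good filtration'' (Theorem~\ref{filt-ext}); one shows that any candidate connecting map $g\colon\Xb\to\Ub[1]$ is null-homotopic by extending homotopies up the filtration (Propositions~\ref{retract}, \ref{extend-homotopy}, Lemma~\ref{res-field}). Your argument instead reads off $\Ext^n_A(k,k)=0$ for $n\gg0$ from the boundedness of $Sk$ and finishes with Auslander--Buchsbaum--Serre. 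This is correct and elegant; what the paper's route buys is that it is entirely self-contained (it never invokes \cite{RV} for this direction) and it gives concrete information about \emph{which} indecomposable object obstructs AR-triangles, namely the resolution of $k$ itself.
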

Our proof is not similar to Happel's.

To state our next result we need the following notation. Let $K^b_f(\proj A)$ be the subcategory of $K^b(\proj A)$ with finite length cohomology.
By Happel's result if $(A,\m)$ is a zero-dimensional commutative Gorenstein ring then $K^b(\proj A)$ has AR-triangles. For higher dimensional Gorenstein rings we prove the following extension of one direction of Happel's result  \cite[3.4]{happel-3}.
\begin{theorem}
  \label{Gor} Let $(A,\m)$ be a complete Gorenstein local ring.
  Then $K^b_f(\proj A)$ has AR-triangles.
\end{theorem}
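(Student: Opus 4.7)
The plan is to exhibit a Serre functor on $\Tc := K^b_f(\proj A)$ and then invoke the standard theorem of Reiten--Van den Bergh that a Hom-finite Krull--Schmidt triangulated category with a Serre functor has AR-triangles both ending and starting at every indecomposable. In this Gorenstein setting the Serre functor will turn out to be the $d$-fold shift $[d]$, where $d = \dim A$, making $\Tc$ a $d$-Calabi--Yau category.

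First I would set up the categorical framework. Using the equivalence $D^b_f(\md A) \cong K^{-,b}_f(\proj A)$ already noted in the excerpt, the inclusion $\Tc \hookrightarrow D^b_f(\md A)$ is fully faithful, and $\Tc$ is closed under direct summands (a summand of a bounded complex of finitely generated free modules is quasi-isomorphic to another such). Hence Hom-finiteness and the Krull--Schmidt property of $\Tc$ are inherited from those of $D^b_f(\md A)$ recorded at \ref{tea}.

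Next I would construct the Serre functor. Since $A$ is complete Gorenstein of dimension $d$, the complex $A[d]$ is a dualizing complex, and Grothendieck's local duality yields, for any $\Zb \in D^b_f(\md A)$,
\[
\mathbf{R}\Gamma_\m(\Zb) \simeq D\bigl(\mathbf{R}\Hom_A(\Zb, A[d])\bigr),
\]
where $D = \Hom_A(-, E_A(A/\m))$ is Matlis duality. For $\Xb, \Yb \in \Tc$, set $\Zb = \mathbf{R}\Hom_A(\Yb, \Xb)$; since $\Xb, \Yb$ are perfect complexes with finite length cohomology, $\Zb$ itself has finite length cohomology and so $\mathbf{R}\Gamma_\m(\Zb) \simeq \Zb$. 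Combining local duality with the reflexivity identification
\[
\mathbf{R}\Hom_A\bigl(\mathbf{R}\Hom_A(\Yb, \Xb), A\bigr) \simeq \mathbf{R}\Hom_A(\Xb, \Yb),
\]
which follows from $\mathbf{R}\Hom_A(\Yb, \Xb) \simeq \mathbf{R}\Hom_A(\Yb, A) \otimes_A^{\mathbf{L}} \Xb$ and the reflexivity $\Yb^{\vee\vee} \simeq \Yb$ for $\Yb$ perfect, one obtains a natural isomorphism
\[
\mathbf{R}\Hom_A(\Yb, \Xb) \simeq D\bigl(\mathbf{R}\Hom_A(\Xb, \Yb)\bigr)[-d].
\]
Passing to $H^0$ and using $H^n D = D H^{-n}$ for Matlis duality, this becomes the Serre formula
\[
\Hom_{\Tc}(\Xb, \Yb) \simeq D\bigl(\Hom_{\Tc}(\Yb, \Xb[d])\bigr),
\]
natural in both arguments. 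Thus $[d]$ is a Serre functor on $\Tc$.

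The main obstacle is verifying this Serre formula: one must carefully track the degree shifts produced by local duality and by the reflexivity isomorphism, and check bifunctoriality of the resulting isomorphism in $\Xb$ and $\Yb$. Completeness of $A$ enters precisely to make local duality take its clean form, and the Gorenstein hypothesis is what ensures $A[d]$ serves as the dualizing complex. Once the Serre formula is in hand, Reiten--Van den Bergh produces AR-triangles in $\Tc$ and the proof is complete.
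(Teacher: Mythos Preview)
Your proposal is correct and arrives at the same endpoint as the paper---a Serre functor on $K^b_f(\proj A)$ plus Reiten--Van den Bergh---but via a genuinely different route. The paper constructs the Serre functor as the composite $G = \pb_r \circ \Ec_r^{op} \circ \Dc_r$ of the $A$-dual, the Matlis dual, and the projective-resolution functor; the subtle point is showing that this composite really lands in $K^b_f(\proj A)$, and for that the paper first restricts everything to the thick subcategory $\Fc_f$ generated by the resolutions $\Pb^M$ of finite-length finite-projective-dimension modules, and then invokes Neeman's classification of thick subcategories of $K^b(\proj A)$ to identify $\Fc_f$ with $K^b_f(\proj A)$. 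You bypass all of this by applying Grothendieck local duality directly to $\mathbf{R}\Hom_A(\Yb,\Xb)$, which yields the Serre formula in one stroke and identifies the Serre functor as the shift $[d]$, making the $d$-Calabi--Yau structure explicit. Each argument trades one nontrivial external input for another: the paper uses Neeman's result, you use local duality. Your approach is shorter and more conceptual; the paper's has the virtue that the Serre isomorphism itself comes from the same elementary chain-level calculation used in the proof of Theorem~\ref{happel-gen}, with the deep input isolated to the question of whether the candidate functor is an equivalence.
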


\begin{remark}
When $A$ is complete then Theorem \ref{happel-gen} ((i) $\implies$ (ii)) follows from Theorem \ref{Gor}. However even in the complete case it is relevant to prove Theorem\ref{happel-gen} directly. The reason is that the proof of Theorem \ref{Gor} uses a non-trivial result due to Neeman \cite{NK} while proof of Theorem\ref{happel-gen} is entirely self-contained.
\end{remark}

 We believe the converse to Theorem \ref{Gor} is true. More precisely
\begin{conjecture}
  \label{h-conj} Let $(A,\m)$ be a complete Noetherian local ring. If \\ $K^b_f(\proj A)$ has AR-triangles then $A$ is Gorenstein.
\end{conjecture}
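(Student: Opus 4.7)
The plan is to produce a Serre functor on $K^b_f(\proj A)$ from the hypothesized AR-triangles, to identify it with $-\otimes^L_A D_A$ where $D_A$ is the normalized dualizing complex of $A$ (which exists since $A$ is complete), and then to force $D_A$ to be a shift of $A$ --- equivalently, to force $A$ to be Gorenstein.

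First, by an argument analogous to \ref{tea} for $D^b_f(\md(A))$, the category $K^b_f(\proj A)$ is Hom-finite Krull--Schmidt with respect to Matlis duality $(-)^\vee = \Hom_A(-,E_A(A/\m))$: every Hom-space between two objects is a finite length $A$-module, being computed in the derived category between a perfect complex and a complex of finite length cohomology. The Reiten--Van den Bergh correspondence then converts the assumed AR-triangles into a Serre functor $S$ on $K^b_f(\proj A)$ satisfying $\Hom(\Xb,\Yb) \cong \Hom(\Yb,S\Xb)^\vee$.

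Next I would identify $S$ abstractly via local duality. For $\Xb \in K^b_f(\proj A)$ and $\Yb$ a complex with finite length cohomology one has a natural isomorphism
\[
\Hom_{D^b(\md A)}(\Xb,\Yb)^\vee \cong \Hom_{D^b(\md A)}(\Yb,\Xb \otimes^L_A D_A),
\]
and uniqueness of Serre functors then forces $S(\Xb) \simeq \Xb \otimes^L_A D_A$. In particular $\Xb \otimes^L_A D_A$ lies in $K^b_f(\proj A)$ for every $\Xb$, and $-\otimes^L_A D_A$ is an autoequivalence of this category.

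Finally I would test this autoequivalence on the Koszul complex $K(\xb)$ for a system of parameters $\xb$, exploiting its self-duality up to shift to translate the problem into a statement about the Bass numbers $\mu^i(\m,A) = \dim_k \Ext^i_A(k,A)$. In the \CM \ situation already treated in the paper one has $D_A \simeq \omega_A[\dim A]$, and invertibility of $-\otimes^L_A \omega_A$ yields $\omega_A \simeq A$ at once. The hardest step, and the one that blocks the full conjecture, is the non-\CM \ case in dimension $\ge 2$: then $D_A$ has nonzero cohomology on the entire interval $[-\dim A,-\depth A]$, and the Koszul test only controls the extremal piece. Propagating the Bass-number vanishing from depth level all the way up to dimension level is where I would expect a genuinely new idea to be required, beyond the \CM \ and $\dim A = 1$ arguments already recorded in the paper.
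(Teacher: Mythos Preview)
The statement is labelled a \emph{conjecture} in the paper; there is no proof of it there. The paper only establishes the special cases $A$ \CM\ and $\dim A = 1$ (Theorem \ref{cm-gor-intro}), and you correctly flag that your own sketch stalls at exactly the same boundary.

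For the \CM\ case your route is genuinely different from the paper's. The paper never invokes a dualizing complex: it takes the Koszul resolution $\Xb$ of $A/(\xb)$ for a system of parameters $\xb$, uses the width invariant of Proposition \ref{width-invar} and the acyclicity lemma to force the Serre functor image $F(\Xb)$ to be (a shift of) a minimal resolution of some cyclic module $A/J$, identifies $J=(\xb)$ via $\Hom(\Xb,\Xb)\cong\Hom(F(\Xb),F(\Xb))$, and then reads off $(A/(\xb))^\vee\cong A/(\xb)$, whence $A/(\xb)$ is Artinian Gorenstein and $A$ is Gorenstein. Your dualizing-complex approach is more conceptual, but it leans on one step you pass over too quickly.

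That step --- ``uniqueness of Serre functors forces $S(\Xb)\simeq\Xb\otimes^L_A D_A$'' --- is a real gap as written. Uniqueness of Serre functors compares two endofunctors of the \emph{same} category, but $-\otimes^L_A D_A$ is not a priori an endofunctor of $K^b_f(\proj A)$; indeed that is precisely what you want to conclude. The repair is a Yoneda argument carried out in the ambient $D^b_f(\md A)$: evaluating the two duality isomorphisms at $\Yb=S(\Xb)$ produces a map $\phi\colon S(\Xb)\to\Xb\otimes^L_A D_A$, naturality gives $\Hom(K(\xb)[j],\cone\phi)=0$ for all $j$, and since each $x_i$ acts nilpotently on the finite-length cohomology of $\cone\phi$ one gets $K(\xb)\otimes^L\cone\phi=0\Rightarrow\cone\phi=0$. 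With this patch your \CM\ argument is complete and gives a pleasant alternative to the paper's. The paper's $\dim A=1$ non-\CM\ argument, by contrast, is a bespoke case analysis (ruling out $F(\Xb)\cong\Xb[s]$ for $s\in\{-1,0,1\}$ one at a time) that your framework does not visibly absorb; and for non-\CM\ rings of dimension $\ge 2$ the conjecture remains open by either method.
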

We prove Conjecture \ref{h-conj} under the following cases:
 \begin{theorem}\label{cm-gor-intro}
Let $(A,\m)$ be a complete Noetherian local ring. Assume \\
$K^b_f(\proj A)$ has AR-triangles. Then
\begin{enumerate}[\rm (1)]
  \item if $A$ is \CM  \ then $A$ is Gorenstein.
  \item if $ \dim A = 1$ then $A$ is Gorenstein.
\end{enumerate}
 then $A$ is Gorenstein.
\end{theorem}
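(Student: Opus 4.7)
The plan is to leverage the Reiten--Van den Bergh correspondence between AR-triangles and Serre functors, identify the Serre functor explicitly via local duality, and then read off that this Serre functor can only preserve $K^b_f(\proj A)$ when $A$ is Gorenstein.

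The category $K^b_f(\proj A)$ is Hom-finite ($\Hom$-groups are finite-length $A$-modules) and Krull--Schmidt (endomorphism rings are Artinian, hence semi-perfect). By the Reiten--Van den Bergh theorem the AR-hypothesis yields a Serre functor $S$ on $K^b_f(\proj A)$ satisfying $D\Hom(X,Y)\cong\Hom(Y,SX)$, where $D=\Hom_A(-,E)$ is Matlis duality. For $X,Y\in K^b_f(\proj A)$ the perfectness of $X$ gives $\mathbf{R}\Hom(X,Y)\cong X^\vee\otimes^{\mathbf{L}}_AY$ with $X^\vee=\mathbf{R}\Hom(X,A)$ still in $K^b_f(\proj A)$; local duality (which on finite-length-cohomology complexes turns Matlis duality into $\mathbf{R}\Hom(-,\omega_A^\bullet)$) together with tensor-hom adjunction yields
\[
D\Hom_{D^b(A)}(X,Y)\;\cong\;\Hom_{D^b(A)}\!\bigl(Y,\,X\otimes^{\mathbf{L}}_A\omega_A^\bullet\bigr).
\]
A Yoneda/detection argument---the cone of the induced natural map $SX\to X\otimes^{\mathbf{L}}_A\omega_A^\bullet$ has finite-length cohomology, lies in the right orthogonal of $K^b_f(\proj A)$, and therefore vanishes since any non-zero finite-length-cohomology complex $C$ has $\mathbf{R}\Hom(K(\xb),C)\neq 0$ for a system of parameters $\xb$ (by Nakayama applied to the bottom cohomology of $C$)---upgrades this to $SX\cong X\otimes^{\mathbf{L}}_A\omega_A^\bullet$ in $D^b(A)$. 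The hypothesis thereby forces $X\otimes^{\mathbf{L}}_A\omega_A^\bullet$ to be perfect for every $X\in K^b_f(\proj A)$.

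For part (1), $A$ is \CM\ of dimension $d$, so $\omega_A^\bullet=\omega_A[d]$. Taking $X=K(\xb)$ for a regular system of parameters $\xb=x_1,\dots,x_d$ and using $\omega_A$-regularity of $\xb$ yields $X\otimes^{\mathbf{L}}\omega_A^\bullet\cong(\omega_A/\xb\omega_A)[d]$, which by the preceding paragraph must be perfect, i.e.\ $\projdim_A(\omega_A/\xb\omega_A)<\infty$. Iterating the short exact sequences
\[
0\to\omega_A/(x_1,\dots,x_{i-1})\omega_A\xrightarrow{x_i}\omega_A/(x_1,\dots,x_{i-1})\omega_A\to\omega_A/(x_1,\dots,x_i)\omega_A\to 0
\]
transfers finite projective dimension back to $\omega_A$; Auslander--Buchsbaum then forces $\projdim_A\omega_A=0$ (since $\depth\omega_A=d=\depth A$), so $\omega_A\cong A$ and $A$ is Gorenstein.

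For part (2), assume for contradiction that $\depth A=0$. Choose a parameter $x\in\m$ and $n$ large enough that $(0:_Ax^n)=H^0_\m(A)$ is finite length, so $X=K(x^n)\in K^b_f(\proj A)$. Since $x^n$ acts as zero on $k$, $X\otimes^{\mathbf{L}}k\cong k\oplus k[1]$; combined with the perfectness of $X\otimes^{\mathbf{L}}\omega_A^\bullet$, a direct Tor-with-$k$ calculation forces $\omega_A^\bullet$ itself to be perfect. Over a complete local ring a perfect dualizing complex is a shift of $A$, so $A$ is Gorenstein, hence \CM\ with $\depth A=1$---contradicting $\depth A=0$. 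Thus $A$ is \CM\ and (1) applies. The main technical obstacle throughout is the detection step in paragraph 2: promoting the Serre-functor identity from an identification of $\Hom$-functors on $K^b_f(\proj A)$ to an isomorphism of the representing objects in the larger derived category $D^b(A)$.
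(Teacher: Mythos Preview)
Your argument is correct and takes a genuinely different route from the paper. The paper never identifies the Serre functor; instead it exploits only its abstract existence together with an elementary ``width--invariance'' lemma (an equivalence preserves the length of a minimal perfect complex), and then carries out explicit computations of $\Hom(\Xb,\Xb[j])$ for the Koszul complex $\Xb$ on a system of parameters. In the \CM\ case this pins down $F(\Xb)$ as a resolution of $A/I$ and forces $(A/I)^\vee\cong A/I$; in dimension one it reduces to a three--case analysis of the shift $F(\Xb)=\Xb[s]$, each case dispatched by hand. Your approach trades these computations for Grothendieck local duality: you show that the Serre functor must be $-\otimes^{\mathbf L}\omega_A^\bullet$ (the detection step you flag is indeed the heart of the matter, and it goes through because any nonzero object of $D^b_f(A)$ receives a nonzero map from a shift of the Koszul complex on a system of parameters), and then read off perfectness of $\omega_A$ or $\omega_A^\bullet$.

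What your approach buys is considerably more than you claim. Your part~(2) does not actually use $\dim A=1$ except in choosing a single parameter: replace $K(x^n)$ by the Koszul complex $K(\xb)$ on a full system of parameters $\xb=x_1,\dots,x_d$ (its Koszul homology is annihilated by $\xb$, hence of finite length, so $K(\xb)\in K^b_f(\proj A)$). Since $\xb\subset\m$, one has $K(\xb)\otimes^{\mathbf L}k\cong\bigoplus_i k^{\binom{d}{i}}[i]$, and the same Tor-with-$k$ argument shows that boundedness of $(K(\xb)\otimes^{\mathbf L}\omega_A^\bullet)\otimes^{\mathbf L}k$ forces $\omega_A^\bullet\otimes^{\mathbf L}k$ to be bounded, hence $\omega_A^\bullet$ perfect, hence $A$ Gorenstein. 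In other words, your method proves Conjecture~\ref{h-conj} in full, not just the two special cases of Theorem~\ref{cm-gor-intro}. (Correspondingly, the reductio framing ``assume $\depth A=0$'' in your part~(2) is unnecessary: the argument yields Gorensteinness outright.) The paper's method is more elementary and self-contained, avoiding dualizing complexes and local duality entirely, but at the cost of this generality.
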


If $A$ is also Henselian then $C^b(\proj A)$ is also a Krull-Schmidt category. It  then can be easily verified that $K^b(\proj A)$ is also a Krull-Schmidt category.
Let $K^b_f(\proj A)$  denote the  complexes in $K^b(\proj A)$ with finite cohomology. \emph{ We give two results which justify why we are concentrating on $K^b_f(\proj A)$}.

Our first result is the following. In a Krull-Schmidt triangulated category we may consider the notion of right (left) AR( Auslander-Reiten) triangles. We prove
\begin{theorem}
  \label{ar} Let $(A,\m)$ be a Henselian commutative Noetherian local ring. Let $\Xb \in K^b(\proj A)$. If there is a right (left) AR-triangle ending at $\Xb$ then $H^*(\Xb)$ has finite length.
\end{theorem}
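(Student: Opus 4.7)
Let the right AR-triangle be $\Zb \to \Yb \to \Xb \xrightarrow{w} \Zb[1]$ with $\Xb$ indecomposable and $w \ne 0$. The plan is to show that the ideal $I := \ker\bigl(A \to \operatorname{End}_{K^b(\proj A)}(\Xb)\bigr)$, sending $a$ to $a \cdot \mathrm{id}_\Xb$, is $\m$-primary. Since localization commutes with $\Hom$ of bounded complexes of finitely generated projectives, $\operatorname{End}_{K^b(\proj A)}(\Xb)_\pb \cong \operatorname{End}_{K^b(\proj A_\pb)}(\Xb_\pb)$, which vanishes iff $\Xb_\pb \simeq 0$ iff $H^*(\Xb)_\pb = 0$; hence $\operatorname{supp}_A H^*(\Xb) = V(I)$, so the theorem reduces to proving $V(I) = \{\m\}$.

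Since $A$ is Henselian and $\Xb$ is indecomposable, $\operatorname{End}_{K^b(\proj A)}(\Xb)$ is a (noncommutative) local $A$-algebra, finitely generated as an $A$-module, with maximal ideal $\mathfrak{n}$ equal to the set of non-isomorphisms. Also $H^*(\Xb) \ne 0$ (else $\Xb \simeq 0$, contradicting indecomposability), so Nakayama on cohomology forces $d \cdot \mathrm{id}_\Xb \in \mathfrak{n}$ for every $d \in \m$.

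Suppose for contradiction that $V(I) \ne \{\m\}$. Choose $\pb \in V(I)$ with $\pb \subsetneq \m$ and any $d \in \m \setminus \pb$. Primality of $\pb$ gives $d^n \notin \pb$, hence $d^n \notin I$, so $\phi_n := d^n \cdot \mathrm{id}_\Xb$ is a nonzero element of $\mathfrak{n}$ for every $n \ge 1$. Form the cone triangle $\Xb \xrightarrow{\phi_n} \Xb \xrightarrow{\iota_n} \mathbf{C}_n \xrightarrow{\delta_n} \Xb[1]$ in $K^b(\proj A)$ and set $\psi_n := \delta_n[-1] \colon \mathbf{C}_n[-1] \to \Xb$. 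If $\psi_n$ admitted a section $s$, then $\phi_n \circ \psi_n = 0$ would give $\phi_n = \phi_n \circ \psi_n \circ s = 0$, contradicting $\phi_n \ne 0$; so $\psi_n$ is not split epi. The right AR property then forces $w \circ \psi_n = 0$, and applying $\Hom_{K^b(\proj A)}(-, \Zb[1])$ to the triangle, exactness yields
\[
w \in \ker(\psi_n^*) = \image(\phi_n^*) = d^n \cdot \Hom_{K^b(\proj A)}(\Xb, \Zb[1]).
\]

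The module $\Hom_{K^b(\proj A)}(\Xb, \Zb[1])$ is finitely generated over $A$ (being a subquotient of $\bigoplus_i \Hom_A(\Xb^i, \Zb^{i+1})$), and $d^n \in \m^n$, so Krull's intersection theorem forces $w \in \bigcap_n \m^n \Hom_{K^b(\proj A)}(\Xb, \Zb[1]) = 0$, contradicting $w \ne 0$. Hence $V(I) = \{\m\}$ and $H^*(\Xb)$ has finite length. The left AR case is dual: one applies $\Hom_{K^b(\proj A)}(\Zb[-1], -)$ to the same cone triangle and uses that $\iota_n$ is not split mono when $\phi_n \ne 0$. The main obstacle is the initial identification $\operatorname{supp}_A H^*(\Xb) = V(I)$, which requires verifying that localization commutes with $\Hom$ in $K^b(\proj A)$ for bounded complexes of finitely generated projectives and that acyclicity of such a complex over $A_\pb$ is equivalent to contractibility; once this is in place, the cone-plus-Krull-intersection argument is essentially forced.
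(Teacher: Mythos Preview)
Your argument is correct and reaches the same endpoint as the paper (Krull's intersection applied to $w \in \bigcap_n \m^n \Hom(\Xb,\Zb[1])$ for a suitable $d \in \m \setminus \pb$), but the route is considerably more direct. The paper first develops the poset $\Sc(\Xb)$ of triangles $\Xb[-1] \xrightarrow{u} \Ub \to \Kb \to \Xb$ with $\Ub$ indecomposable and $u \neq 0$, shows it is directed, and proves that a left AR-triangle is exactly its minimal element; it then manufactures elements of $\Sc(\Xb)$ via a projective cover $0 \to \Vb \to \Pb \to \Xb \to 0$, picks an indecomposable summand $\Vb(i)$ with $(v_i)_P \neq 0$, multiplies by $\alpha^n$ for $\alpha \in \m\setminus P$, and uses minimality to factor $w$ through $\alpha^n v_i$. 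The right AR case is then deduced from the left case via the duality $\Dc = \cHom_A(-,A)$. You bypass all of this: coning directly off $d^n \cdot 1_{\Xb}$ and invoking (RAR3g) for the non-retraction $\psi_n$ gives $w \circ \psi_n = 0$, and one exact sequence later you have $w \in d^n\Hom(\Xb,\Zb[1])$. What the paper's approach buys is the independent characterization of left AR-triangles as minimal elements of $\Sc(\Xb)$ (its Theorem~\ref{l-ar-char}), which may be of interest in its own right; what your approach buys is a shorter, self-contained proof that treats the right case head-on and needs no duality. The localization fact you flag as the ``main obstacle'' is recorded in the paper as~2.5 and is indeed elementary. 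One minor remark: your paragraph establishing that $d\cdot 1_\Xb$ lies in the Jacobson radical of $\operatorname{End}(\Xb)$ is not actually used anywhere in the argument---all you need is $\phi_n \neq 0$, which you get from $d^n \notin I$.
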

The techniques used to prove Theorem \ref{ar} are inspired by some techniques of Yoshino, see \cite[Chapter 2]{Y}.

To state our second result we require some preliminaries:
Let $\Xb \in C^b(\proj A)$ be a minimal complex i.e $\partial (\Xb) \subseteq \m \Xb$. Set
$$ \width \Xb = \sup \{ i \mid \Xb^i \neq 0\} - \inf \{ i \mid \Xb^i \neq 0\}$$
and
$$ \rank \Xb = \sum_{i} \rank \Xb^i. $$
In general if $\Xb \in C^b(\proj A)$ then $\Xb = \Ub \oplus \Vb$ where $\Ub$ is minimal complex and $\Vb $ is a finite direct sum of shifts of the complex $0 \rt A \xrightarrow{1} A \rt 0$. We set
$$ \width \Xb = \width \Ub \quad \text{and} \quad \rank \Xb = \rank \Ub. $$
We note that if $\Xb \in K^b(\proj A)$ then we can define width and rank of $\Xb$ to be same as considered as a complex in $C^b(\proj A)$. It can be easily seen that  this is well-defined.

Our second result justifying study of  $K^b_f(\proj A)$ is
\begin{theorem}\label{huneke-leu}
Let $\Tc$ be a thick subcategory of $K^b(\proj A)$ and let $\Xb \in \Tc$. Let $m = \width \Xb$ and $r = \rank \Xb$. Suppose $\Tc$  has only finitely many complexes (upto shift) with width $\leq 2m$ and rank $\leq 2r$. Then $H^*(\Xb)$ has finite length.
\end{theorem}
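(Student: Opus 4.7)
Our approach is to prove the contrapositive: assuming $H^*(\Xb)$ has infinite length, we will construct infinitely many pairwise non-isomorphic (up to shift) complexes in $\Tc$ of width $\leq 2m$ and rank $\leq 2r$, contradicting the hypothesis. Since $H^*(\Xb)$ has infinite length, we fix $i$ with $M := H^i(\Xb)$ of infinite length, so $\dim_A M \geq 1$ and $\Ga_\m(M) \subsetneq M$. Let $S$ be the finite set of non-maximal primes $\bigcup_j \bigl(\Ass_A H^j(\Xb) \setminus \{\m\}\bigr)$; by prime avoidance we choose $x \in \m$ lying outside every prime of $S$, which ensures that $x$ is a non-zero-divisor on each $H^j(\Xb)/\Ga_\m(H^j(\Xb))$.

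For each $n \geq 1$ we set $\Yb_n := \cone\bigl(x^n \colon \Xb \to \Xb\bigr)$, where $x^n$ denotes multiplication by $x^n$. Since $\Tc$ is thick and $\Xb \in \Tc$, each $\Yb_n$ lies in $\Tc$. The terms of $\Yb_n$ in degree $k$ are $\Xb^{k+1} \oplus \Xb^k$, so $\width \Yb_n \leq m+1 \leq 2m$ (assuming $m \geq 1$; the boundary case $m=0$, where $\Xb$ is a single projective, would merit separate comment). Also $\rank \Yb_n \leq 2r$, and $\Yb_n$ remains minimal because $x \in \m$.

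To distinguish the $\Yb_n$ up to shift, we will use the nilpotency index of the action of $x$ on the graded $A$-module $H^*(\Yb_n)$, a quantity preserved by isomorphism of complexes and by shift. The triangle $\Xb \xrightarrow{x^n} \Xb \to \Yb_n \to \Xb[1]$ yields, for each $j$, a short exact sequence
\[
0 \to H^j(\Xb)/x^n H^j(\Xb) \to H^j(\Yb_n) \to (0 :_{H^{j+1}(\Xb)} x^n) \to 0.
\]
By the choice of $x$, the right-hand term is contained in the finite-length module $\Ga_\m(H^{j+1}(\Xb))$, whose $x$-nilpotency index is bounded by a constant $C$ depending only on $\Xb$ (and not on $n$). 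For $j=i$, a Nakayama argument (lifting a non-zero class in $(M/\Ga_\m M)/x(M/\Ga_\m M)$ back to $y \in M$ and exploiting that $x$ is a NZD on $M/\Ga_\m M$) shows $x^{n-1} y \notin x^n M$, so $x^{n-1}$ acts non-trivially on $M/x^n M$ while $x^n$ annihilates it. Consequently the $x$-nilpotency index of $H^*(\Yb_n)$ lies in $[n, n+C]$, and selecting $n$'s separated by more than $C$ produces infinitely many pairwise non-isomorphic $\Yb_n$, giving the desired contradiction. We expect the main technical obstacle to be the lower bound in this nilpotency computation: one must carefully verify that being a NZD on $M/\Ga_\m M$ (rather than on all of $M$) is sufficient, which relies on separating out the $\m$-torsion of $M$ and tracking $x$-divisibility modulo it.
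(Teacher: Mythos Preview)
Your proof is correct and takes a genuinely different route from the paper's.

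Both arguments begin by forming cones of powers of an element of $\m$ acting on $\Xb$, and both observe that these cones lie in $\Tc$ with width $\leq m+1$ and rank $\leq 2r$. The divergence is in how the pigeonhole is exploited. The paper allows an arbitrary endomorphism $u$ and sets $\Kb(n)=\cone(r^n u)$; when $\Kb(n)\cong\Kb(m)$ for some $n<m$, it invokes the Miyata-type splitting theorem (Theorem~\ref{Miyata-proj}) for $K^{-,b}(\proj A)$, together with a pushout construction, to deduce that $r^n u$ is infinitely divisible by $r^{m-n}$ and hence zero. Having shown that every $r^n u$ is eventually zero, it concludes via Corollary~\ref{hl-lemma}. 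Your argument instead runs the contrapositive: choosing a single $x\in\m$ avoiding the non-maximal associated primes of the cohomology, you compute the $x$-nilpotency index of $H^*(\cone(x^n))$ directly from the long exact sequence and show it lies in $[n,n+C]$ for a fixed $C$, so well-spaced values of $n$ give infinitely many shift-classes. This is more elementary in that it bypasses the Miyata analogue entirely and never leaves module-theoretic invariants; the paper's approach, by contrast, is closer in spirit to the original Huneke--Leuschke argument and showcases the triangulated Miyata theorem as a tool. Your flagged caveat about $m=0$ is apt (and applies equally to the paper's bound $\width\Kb(n)\leq m+1$); the statement should really carry the hypothesis $m\geq 1$.
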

The proof of the above result is an abstraction of a proof due to Huneke and Leuschke, see \cite[Theorem 1]{HL}.
To get our abstraction we need an analogue of a theorem of Miyata \cite[Theorem 1]{Mi}.
Miyata proved that if $R$ is a commutative  Noetherian ring and let $\Ga$ be an $R$- algebra which is  finitely generated  as a $R$-module. Let $M, N, L$ be  finitely generated $\Ga$  modules such that there is an exact sequence
$$ s \colon 0 \rt M   \rt  N \rt L   \rt 0$$
If $N \cong M \oplus L$ then $s$ is split.

Let $\Cc$ be a triangulated category.
If $s \colon M \rt N \rt L \xrightarrow{0} M[1]$ is a triangle then it is well-known that  $L \cong N \oplus M$; see \cite[1.2.7]{N}.
Our analogue of Miyata's theorem is as follows:
\begin{theorem}\label{Miyata-proj}
 Let $\Ga$ be a left Noetherian ring (not necessarily commutative).  Suppose we have a triangle in $K^{-,b}(\proj \Ga)$
\[
s \colon \Ub \xrightarrow{u} \Wb \xrightarrow{w} \Vb \xrightarrow{v} \Ub[1],
\]
such that $\Wb  \cong \Ub \oplus \Vb$ then $v = 0$ (equivalently  there is $\xi \colon \Vb \rt \Wb$ such that $w \circ \xi = 1_{\Vb}))$.
\end{theorem}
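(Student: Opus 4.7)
The plan is to adapt Miyata's module-theoretic argument to the setting of complexes. Fix an isomorphism $\phi \colon \Ub \oplus \Vb \xrightarrow{\sim} \Wb$. With respect to this direct-sum decomposition, let $a \in \operatorname{End}(\Ub)$ and $b \in \Hom(\Ub,\Vb)$ be the components of $\phi^{-1}\circ u \colon \Ub \to \Ub \oplus \Vb$, and let $c \in \Hom(\Ub,\Vb)$ and $d \in \operatorname{End}(\Vb)$ be the components of $w\circ\phi \colon \Ub \oplus \Vb \to \Vb$. The identity $w\circ u = 0$ then reads $ca + db = 0$. The whole plan hinges on proving that $d$ is an isomorphism in $K^{-,b}(\proj \Gamma)$: granting this, the composition $\xi \colon \Vb \xrightarrow{d^{-1}} \Vb \hookrightarrow \Ub \oplus \Vb \xrightarrow{\phi} \Wb$ satisfies $w \circ \xi = 1_{\Vb}$, so $v = 0$ by the standard triangle axioms.

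Via the equivalence $K^{-,b}(\proj \Gamma) \simeq D^b(\md \Gamma)$, showing $d$ is an isomorphism is equivalent to showing each induced $H^i(d) \colon H^i(\Vb) \to H^i(\Vb)$ is an automorphism. Using $H^i(\phi)$ to identify $H^i(\Wb) \cong H^i(\Ub) \oplus H^i(\Vb)$, the map $H^i(d)$ appears as the lower-right block of $H^i(w)$. Since each $H^i(\Vb)$ is finitely generated over the left Noetherian ring $\Gamma$, it is Noetherian, and Vasconcelos's theorem (a surjective endomorphism of a Noetherian module is bijective) reduces the problem to showing that each $H^i(d)$ is surjective.

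I would establish surjectivity by descending induction on $i$. The base is vacuous for $i$ above the cohomological support of $\Ub$ and $\Vb$. For the inductive step, under the hypothesis $H^j(v) = 0$ for all $j > i$, the long exact sequence of the triangle in degree $i+1$ yields a short exact sequence
\[
0 \to H^{i+1}(\Ub)/\image H^i(v) \to H^{i+1}(\Wb) \to H^{i+1}(\Vb) \to 0
\]
whose middle term is abstractly isomorphic (via $H^{i+1}(\phi)$) to $H^{i+1}(\Ub) \oplus H^{i+1}(\Vb)$. A module-level Miyata-type argument, combined with the relation $ca + db = 0$ read in cohomology at level $i+1$ (which gives $c_{i+1}(a_{i+1}(M)) \subseteq d_{i+1}(L)$) and Vasconcelos's lemma applied to $a_{i+1}$ and then to $d_{i+1}$, should force $\image H^i(v) = 0$ and simultaneously yield the surjectivity of $H^{i+1}(d)$, closing the induction.

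The main obstacle is precisely this module-level Miyata-style step in the induction. Miyata's original theorem is stated for finite algebras over a commutative Noetherian ring, whereas here $\Gamma$ is only left Noetherian; one must either extend Miyata's argument to this generality or bypass any cancellation theorem by exploiting the matrix identity $ca + db = 0$ directly, together with Vasconcelos's lemma and the Noetherian property of the cohomology modules. Once the module-level step is in place, the rest of the argument is formal.
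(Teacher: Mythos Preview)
Your plan has a real gap at exactly the point you flag. The descending induction, as you set it up, lands on the module-level statement: from a short exact sequence $0 \to H^{i+1}(\Ub)/\image H^i(v) \to H^{i+1}(\Ub) \oplus H^{i+1}(\Vb) \to H^{i+1}(\Vb) \to 0$ (middle term only \emph{abstractly} isomorphic to the sum), deduce $\image H^i(v) = 0$. Over a merely left Noetherian ring this is a cancellation problem and need not hold. The identity $ca + db = 0$ only gives $\image(c_{i+1} a_{i+1}) \subseteq \image(d_{i+1})$, which by itself forces neither $d_{i+1}$ onto nor $H^i(v) = 0$; invoking Vasconcelos on $a_{i+1}$ first presupposes that $a_{i+1}$ is surjective, and nothing in your setup yields that. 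The inductive bookkeeping is also tangled: you carry the hypothesis $H^j(v) = 0$ for $j > i$, but you have not tied this back to surjectivity of the $H^j(d)$ established at earlier steps.

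The paper sidesteps the whole difficulty by choosing its endomorphism of $\Vb$ differently. Instead of your $d = w\circ\phi\circ\iota_{\Vb}$ (whose surjectivity on $H^i$ factors through $H^i(w)$ and hence already needs control of $v$), the paper uses TR3 to build a morphism of triangles from $s$ to the split triangle $\Ub \xrightarrow{i} \Ub\oplus\Vb \xrightarrow{\pi} \Vb \xrightarrow{0} \Ub[1]$, with the given isomorphism $\theta = \phi^{-1}$ in the middle and some $h \colon \Vb \to \Vb$ on the third vertex. The right-hand commuting square reads $h\circ w = \pi\circ\theta$; since $H^i(\pi\circ\theta)$ is surjective for every $i$ (projection composed with an isomorphism), so is $H^i(h)$ --- in one line, with no induction and no cancellation. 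Then the surjective-endomorphism-of-a-Noetherian-module fact and Lemma~\ref{m-Lemma} finish exactly as in your outline: $h$ is an isomorphism, hence so is the whole morphism of triangles, and a last diagram chase gives $v = 0$.
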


 We now describe in brief the contents of this paper. In section two we discuss some preliminary results that we need. In section three we discuss some preliminaries on AR-sequences that we need. In section four we prove that
 $D^b_f(A)$ is a Krull-Schmidt category. In the next section we give a proof of Theorem \ref{happel-gen}( (i) $\implies$ (ii)). In section six we give a proof of Theorem \ref{Gor}. In section seven  we discuss a filtration of certain DG-algebras that we need. In section eight we give a proof of Theorem \ref{happel-gen}( (ii) $\implies$ (i)). In section nine  we give a proof of Theorem \ref{cm-gor-intro}. In the next section we give a proof of Theorem \ref{ar}. In section eleven we prove our analogue of Miyata's theorem. Finally in section twelve we give a proof of Theorem \ref{huneke-leu}.

\section{preliminaries}
We discuss a few preliminarly notions and results that we need.
\s \emph{(co-chain)-Complexes:}
Let $\Gamma$ be a (not-necessarily commutative) left Noetherian ring. We will consider co-chain complexes over $\Gamma$
\[
\Xb \colon  \cdots \rt \Xb^i \xrightarrow{\partial^i} \Xb^{i+1} \xrightarrow{\partial^{i+1}} \Xb^{i+2}\rt \cdots
\]
with $\partial^{i+1} \circ \partial^i = 0$ for all $i$. By $H^i(\Xb)$ we denote the $i^{th}$ cohomology of $\Xb$. By the $m^{th}$ shift  of $\Xb$ we mean the co-chain complex $\Xb[m]$ with $\Xb[m]^i = \Xb^{i+m}$ and differential $d[m]^{i} = (-1)^md^{i+m}$.

\s \emph{cones of maps:} Let $f \colon \Ub \rt \Vb$ be a co-chain map. Then by $\cone(f)$ we denote the co-chain complex with $\cone(f)^n = \Ub^{n+1} \oplus \Vb^n$ with differential given by
$$ \partial^n(u,v) = (-\partial^{n+1}(u), \partial^n(v) - f^{n+1}(u)).$$
We also have an exact sequence
\begin{equation}\label{c-dia}
  0 \rt   \Vb    \rt       \cone(f)      \rt \Ub[1] \rt 0
\end{equation}
where the left map sends $v$ to $(0,v)$ and the right map sends $(u,v)$ to $-u$.
We need the following result
\begin{lemma}\label{cone-center}
Let $r $ be in center of $\Ga$.  Let $f \colon \Ub \rt \Vb$ be a map. Then we have a  commutative diagram
\[
  \xymatrix
{
 0
 \ar@{->}[r]
  & \Vb
\ar@{->}[r]
\ar@{->}[d]^{r}
 & \cone(f)
\ar@{->}[r]
\ar@{->}[d]^{\psi}
& \Ub[1]
\ar@{->}[r]
\ar@{->}[d]^{j}
&0
\\
 0
 \ar@{->}[r]
  & \Vb
\ar@{->}[r]
 & \cone(rf)
\ar@{->}[r]
& \Ub[1]
    \ar@{->}[r]
    &0
\
 }
\]
where the horizontal maps are as in \ref{c-dia} and $j$ is the identity map on $\Ub[1]$.
\end{lemma}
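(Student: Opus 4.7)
The approach is simply to write $\psi$ down as the \emph{diagonal} map and verify the claims by direct computation. Both $\cone(f)^n$ and $\cone(rf)^n$ equal $\Ub^{n+1}\oplus \Vb^n$ as graded $\Ga$-modules, so commutativity of the two squares almost determines $\psi$: commutativity of the right square forces the first coordinate of $\psi^n(u,v)$ to be $u$, while commutativity of the left square forces $\psi^n(0,v)=(0,rv)$. This dictates the block-diagonal morphism $\psi^n(u,v)=(u,rv)$, which is $\Ga$-linear precisely because $r$ lies in the center of $\Ga$.

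Granted this definition, the two squares commute on the nose by inspection. The only thing left to verify is that $\psi$ is a cochain map; expanding the compositions $\psi^{n+1}\circ \partial_{\cone(f)}^n$ and $\partial_{\cone(rf)}^n\circ \psi^n$ one obtains the same expression on both sides, where the only nonformal ingredient is the identity $\partial^n(rv)=r\partial^n(v)$ that holds by $\Ga$-linearity of the differential of $\Vb$. The computation is routine and I foresee no obstacle: the centrality hypothesis on $r$ is used precisely to make multiplication by $r$ a well-defined morphism in the category of complexes of left $\Ga$-modules, after which everything falls out of the definitions.
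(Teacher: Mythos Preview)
Your proposal is correct and matches the paper's approach exactly: the paper simply defines $\psi(u,v)=(u,rv)$ and declares the commutativity easily verified. Your write-up is in fact more detailed than the paper's, spelling out why the formula is forced and checking the cochain condition, but the underlying argument is identical.
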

\begin{proof}
Define $\psi$ as
$$ \psi(u,v) = (u,rv).$$
The commutativity of the diagram can be easily verified.
\end{proof}
\s \emph{Triangulated categories:}\\
We use \cite{N} for notation on triangulated categories. However we will assume that if $\mathcal{C}$ is a triangulated category then $\Hom_\mathcal{C}(X, Y)$ is a set for any objects $X, Y$ of $\mathcal{C}$.

\s By $C^b(\md \Gamma)$ we mean the category of bounded co-chain complexes of finitely generated left $\Gamma$-modules. Let $K^b(\Gamma)$ be the corresponding homotopy category.
Furthermore let $D^b(\md(\Gamma))$ be its derived category. We note that $K^b( \Ga)$ and $D^b(\md (\Ga))$ are triangulated categories with shift functor $[1]$. Let $C^{-,b}(\proj \Gamma)$ be the category of bounded above complexes of finitely generated projective $\Ga$-modules with finitely generated cohomology and let $K^{-,b}(\proj \Ga)$ be the corresponding homotopy category. It is well-known that we have an equivalence of triangulated categories
$ K^{-,b}(\proj \Ga) \cong  D^b(\md(\Gamma))$.

\s We now work over a \emph{commutative} Noetherian ring $A$. We first note that if $\Ub, \Vb \in K^b(A)$ then it is easy to verify that for any prime $P$ in $A$ the natural map
$$ \Hom_{K^b(A)}(\Ub, \Vb)_P \rt \Hom_{K^b(A_P)}({\Ub}_P, {\Vb}_P) $$
is an isomorphism.
As  a consequence we get
\begin{corollary}
\label{hl-lemma}Let $(A,\m)$ be a Noetherian local ring. Let $\Xb \in K^b(\proj A)$ be such that for any $u \in \Hom_{K(\proj A}(\Xb, \Xb)$ and any $r \in \m$ we have $r^n u = 0$ then $ H^*(\Xb)$ has finite length.
\end{corollary}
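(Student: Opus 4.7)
The plan is to show $H^*(\Xb)_P = 0$ for every prime $P \neq \m$; since $\Xb \in K^b(\proj A)$ is bounded, $H^*(\Xb)$ is a finitely generated $A$-module, so support contained in $\{\m\}$ immediately forces finite length.

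Fix a prime $P \subsetneq \m$. Pick any $r \in \m \setminus P$. Applying the hypothesis to the identity map $u = 1_{\Xb} \in \Hom_{K(\proj A)}(\Xb,\Xb)$ yields an exponent $n$ with $r^n \cdot 1_{\Xb} = 0$. By the localization isomorphism stated just before the corollary,
\[
\Hom_{K^b(A)}(\Xb,\Xb)_P \;\xrightarrow{\;\cong\;}\; \Hom_{K^b(A_P)}(\Xb_P,\Xb_P),
\]
the identity $1_{\Xb}$ is carried to $1_{\Xb_P}$. Since $r^n \notin P$, the relation $r^n \cdot 1_{\Xb} = 0$ already kills $1_{\Xb}$ after inverting the element $r^n$, hence a fortiori after localizing at $P$. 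Thus $1_{\Xb_P} = 0$ in $\Hom_{K^b(A_P)}(\Xb_P,\Xb_P)$, which means $\Xb_P \cong 0$ in $K^b(\proj A_P)$.

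In particular $H^i(\Xb)_P = H^i(\Xb_P) = 0$ for every $i$ and every prime $P \neq \m$, so $\Supp H^*(\Xb) \subseteq \{\m\}$. Combined with finite generation, this gives $\ell_A(H^*(\Xb)) < \infty$, as required. There is no real obstacle: the proof is a direct consequence of the localization isomorphism for $\Hom$-sets in $K^b$ together with the fact that a bounded complex of finitely generated projectives over a local ring is zero in the homotopy category precisely when its identity endomorphism vanishes, which in turn is detected by any element in the complement of $P$ that annihilates $1_{\Xb}$.
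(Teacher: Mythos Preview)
Your proof is correct and follows essentially the same route as the paper: localize at a non-maximal prime $P$, use the localization isomorphism for $\Hom$ in $K^b$, and conclude $\Xb_P = 0$, whence $H^*(\Xb)_P = 0$. The only (harmless) difference is that you apply the hypothesis just to $u = 1_{\Xb}$, whereas the paper uses it for all $u$ to kill the whole endomorphism module after localization; either way the conclusion $\Xb_P = 0$ is immediate.
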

\begin{proof}
  Let $P \neq \m$ be a prime. Our assumptions imply that
  $$\Hom_{K(\proj A)}(\Xb, \Xb)_P = 0.$$
   By our earlier observation we get that
 $ \Hom_{K(\proj A_P)}({\Xb}_P, {\Xb}_P) = 0$. So ${\Xb}_P = 0$. It follows that $H^*({\Xb}_P) = 0$. Therefore $H^*(\Xb)_P = 0$. As $P$ is any non-maximal prime the result follows.
\end{proof}

\s \label{chom}  Let $\Ub, \Vb $ be complexes. The \emph{complex of homomorphisms} from $\Ub$ to    $\Vb$  is a complex $\cHom_A(\Ub, \Vb)$ consisting of modules
$$ \cHom_A(\Ub,  \Vb)^n = \prod_{i}\Hom_A(\Ub^i, \Vb^{i+n}), $$
and differential defined on a element
$f$ in $ \cHom_A(\Ub,  \Vb)^n $ by
$$ \partial(f) = \partial_{\Vb}\circ f   -  (-1)^nf\circ \partial_{\Ub}. $$
It is elementary to observe that
$$ H^0(\cHom_A(\Ub, \Vb)) = \Hom_{K(A)}(\Ub,  \Vb). $$

\s \label{chom-p} It is clear that if $\Ub, \Vb \in C^b(\md(A))$ then $ \cHom_A(\Ub,  \Vb) \in C^b(\md(A))$.  Furthermore if $P$ is a prime ideal in $A$ then
\begin{equation}
  \cHom_A(\Ub, \Vb)_P   \cong    \cHom_{A_P}({\Ub}_P, {\Vb}_P).
\end{equation}

\s\label{dual} Let $\Ub \in C^b(\proj A)$. Then set $\Ub^* = \cHom_A(\Ub, A)$ where $A$ is considered as a complex concentrated in degree zero. Clearly we have
\begin{enumerate}
  \item $(\Ub^*)^* \cong \Ub$.
  \item If $\Ub$ is an exact complex then so is $\Ub^*$.
  \item By (2) and \ref{chom-p} it follows that if $\ell(H^*(\Ub)) < \infty$  then $\ell(H^*(\Ub^*)) < \infty$.
\end{enumerate}

Consider $\Dc(-) = \cHom(-, A)$. Then $\Dc$ is a $A$-linear contravariant  functor from $K^b(\proj A)$ to $K^b(\proj A)$ which also restricts to a functor from
$K^b_f(\proj A)$ to $K^b_f(\proj A)$. Note $\Dc^2 \cong 1$.

\s \label{duality-p} We can consider $\Dc$ as (covariant) functor from $K^b(\proj A) \rt K^b(\proj A)^{op}$. It is routine but tedious to verify that $\Dc$ is a triangulated functor and in-fact an equivalence of triangulated categories.

\s Let $\Inj A$ denote the category of injective $A$-modules. Let $K^b(\Inj A)$ be the homotopy category of bounded complexes of injective $A$-modules. Let $E$ denote the injective hull of $k = A/\m$.  Let $K^b(E)$ be the thick category of $K^b(\Inj A)$ generated by $E$. Let $\Ec(-) = \cHom_A(-, E)$. Then $\Ec \colon K^b(\proj A) \rt K^b(E)^{op}$
is a triangulated functor.

\s \label{duality-e} Let $(A,\m)$ be a complete Noetherian local ring. Then by Matlis duality we get $\Hom_A(E, E) = A$. Using this fact it follows that for any $\Xb \in K^b(\proj A)$ and $\Qb \in K^b(E)$ we have natural isomorphism of complexes
$$ \Ec(\Ec(\Xb)) \cong \Xb \quad \text{and}    \quad \Ec(\Ec(\Qb)) \cong \Qb.             $$
So the functor $\Ec \colon K^b(\proj A) \rt K^b(E)^{op}$ is an equivalence of categories.

\s If $A$ is also Henselian then $C^b(\proj A)$ is also a Krull-Schmidt category. It then can be easily verified that $K^b(\proj A)$ is also a Krull-Schmidt category.

\section{Preliminaries on AR-triangles}

 The notion of AR-triangles in a triangulated Krull-Schmidt category was introduced by Happel \cite[Chapter 1]{Happel}.
First note that the notion of split monomorphism and split epimorphism makes sense in any triangulated category. They are called section and retraction respectively in \cite{Happel}.  However in \cite{Happel} AR-triangles are defined only in Hom-finite categories and in this case right AR-triangles are also left AR-triangles (and vice-versa). However we need the notion of AR-triangles (both left and right) in general. So in this section we carefully define the relevant notions.

Let $\Cc$  be a Krull-Schmidt triangulated category with shift functor $ \sum$.

A triangle $ N \xrightarrow{f} E \xrightarrow{g} M \xrightarrow{h} \sum N$  in $\Cc$ is called a \emph{ right AR-triangle} (ending at $M$) if

   (RAR1) \  \ $M, N$ are indecomposable.

   (RAR2) \ \  $h \neq 0$.

   (RAR3) \  \ If $D$ is indecomposable then for every non-isomorphism $t \colon D \rt M$ we have $h\circ t = 0$.

By considering the functor $\Hom_\Cc(D, -)$ it is easy to see that (AR3) is equivalent to

  (RAR3l)  \  \ If $D$ is indecomposable then for every non-isomorphism $t \colon D \rt M$    there exists $q \colon D \rt E$ with $g\circ q = t$.

It is also easy to see that (RAR3l) is equivalent to

(RAR3g)  \  \ If $W$ is not-necessarily indecomposable and  $s \colon W \rt M$ is not a retraction then    there exists $q \colon W \rt E$ with $g\circ q = s$.

Dually,  a triangle $\sum^{-1} M \xrightarrow{w} N \xrightarrow{f} E \xrightarrow{g} M $  in $\Cc$ is called a \emph{left AR-triangle} (starting at $N$) if

   (LAR1) \  \ $M, N$ are indecomposable.

   (LAR2) \ \  $w \neq 0$.

   (LAR3) \  \ If $D$ is indecomposable then for every non-isomorphism $t \colon N \rt D$ we have $t\circ w = 0$.

By considering the functor $\Hom_\Cc( - , D)$ it is easy to see that (LAR3) is equivalent to

  (LAR3l)  \  \ If $D$ is indecomposable then for every non-isomorphism $t \colon N \rt D$    there exists $q \colon E\rt D$ with $ q \circ f = t$.

It is also easy to see that (LAR3l) is equivalent to

(LAR3g)  \  \ If $W$ is not-necessarily indecomposable and  $s \colon N \rt W$ is not a section then    there exists $q \colon E \rt W$ with $q\circ f = s$.

\begin{definition}
We say a Krull-Schmidt triangulated category $\Cc$ has AR-triangles if for any indecomposable $M \in \Cc$ there exists a right AR-triangle ending at $M$ and a left AR-triangle starting at $M$.
\end{definition}

In \cite[1.4.2]{Happel} it is proved that in a Hom-finite Krull-Schmidt category if
$ N \xrightarrow{f} E \xrightarrow{g} M \xrightarrow{h} \sum N$  in $\Cc$ is a right AR-triangle ending at $M$ then its rotation
$\sum^{-1} M \xrightarrow{-\sum h} N \xrightarrow{f} E \xrightarrow{g} M $  is a left AR-triangle starting at $N$. A careful reading of the proof actually shows the following
\begin{proposition}\label{right-to-left}
Let $ N \xrightarrow{f} E \xrightarrow{g} M \xrightarrow{h} \sum N$  in $\Cc$ is a right AR-triangle ending at $M$. If $\Hom_\Cc(N, N)$ is Artinian then its rotation
$\sum^{-1} M \xrightarrow{-\sum^{-1} h} N \xrightarrow{f} E \xrightarrow{g} M $  is a left AR-triangle starting at $N$.
\end{proposition}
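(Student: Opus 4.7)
The plan is to verify (LAR1)--(LAR3) for the rotated triangle $\sum^{-1}M \xrightarrow{w} N \xrightarrow{f} E \xrightarrow{g} M$, where $w = -\sum^{-1}h$. The first two are immediate: (RAR1) supplies the indecomposability of $M$ and $N$, and because $\sum^{-1}$ is an equivalence, $w \neq 0$ is equivalent to $h \neq 0$, which is (RAR2). All the content is in (LAR3).

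To establish (LAR3), let $t \colon N \rt D$ be a non-isomorphism with $D$ indecomposable, and assume for contradiction that $t \circ w \neq 0$ (equivalently, $(\sum t)\circ h \neq 0$). Complete $t \circ w$ to a triangle
\[
\sum^{-1}M \xrightarrow{t\circ w} D \xrightarrow{\alpha} Q \xrightarrow{\beta} M.
\]
A short sign check, using $\sum w = -h$, shows that its rotation is $D \xrightarrow{\alpha} Q \xrightarrow{\beta} M \xrightarrow{(\sum t)\circ h} \sum D$; since the connecting morphism is nonzero, $\beta$ is not a split epimorphism. Condition (RAR3g) applied to $\beta$ then yields $q \colon Q \rt E$ with $g \circ q = \beta$. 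Axiom (TR3), applied to the pair $(q, 1_M)$, extends this to a morphism of triangles into $N \xrightarrow{f} E \xrightarrow{g} M \xrightarrow{h} \sum N$, whose leftmost component is some $\delta \colon D \rt N$ satisfying $\sum(\delta \circ t) \circ h = h$. Writing $\phi = \delta \circ t \in \Hom_\Cc(N,N)$, this becomes $\sum \phi \circ h = h$, and by iteration $\sum \phi^k \circ h = h$ for every $k \geq 1$.

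Indecomposability of $N$ in a Krull--Schmidt category forces $\Hom_\Cc(N,N)$ to be local with Jacobson radical $J$, and the Artinian hypothesis gives $J^n = 0$ for some $n$. If $\phi$ is not a unit, then $\phi \in J$, so $\phi^n = 0$ and the identity $\sum \phi^n \circ h = h$ collapses to $h = 0$, contradicting (RAR2). Otherwise $\phi$ is a unit, in which case $\phi^{-1}\circ \delta$ is a retraction of $t$; since $D$ is indecomposable, $t$ must then be an isomorphism, contradicting our choice of $t$. The main obstacle I anticipate is the octahedral/morphism-of-triangles bookkeeping that correctly identifies $(\sum t)\circ h$ (with the right sign) as the connecting morphism of the rotation; once this is settled, the Artinian-local structure of $\Hom_\Cc(N,N)$ closes the argument cleanly.
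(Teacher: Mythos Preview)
Your argument is correct and is precisely the ``careful reading of Happel's proof'' that the paper alludes to rather than writes out: verify (LAR1)--(LAR2) trivially, and for (LAR3) lift a non-retraction $\beta$ through $g$ via (RAR3g), complete to a morphism of triangles, and analyse the resulting endomorphism $\phi=\delta t$ of $N$ against the relation $\sum\phi\circ h=h$. One small remark: your use of the Artinian hypothesis (to get $J^n=0$ and hence $\phi^n=0$) is heavier than necessary here---from $\sum\phi\circ h=h$ you already have $(1_{\sum N}-\sum\phi)\circ h=0$, and if $\phi$ lies in the Jacobson radical of the local ring $\Hom_\Cc(N,N)$ then $1-\phi$ is a unit, forcing $h=0$ directly (this is exactly the device the paper uses in Lemma~\ref{end-tri}); the Artinian assumption is what the paper inherits from Happel's Hom-finite setting, and your nilpotence argument is a faithful transcription of that, but the statement in fact holds with only ``local'' in place of ``Artinian''.
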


The following dual statement can also be proved similarly (but in our context also see \ref{our-int}).
\begin{proposition}\label{left-to-right}
Let $ \sum^{-1} M \xrightarrow{w}N \xrightarrow{f} E \xrightarrow{g} M $  in $\Cc$ is a left AR-triangle ending at $M$. If $\Hom_\Cc(M, M)$ is Artinian then its rotation
$ N \xrightarrow{f} E \xrightarrow{g} M \xrightarrow{- \sum w  } \sum N $  is a right AR-triangle ending at $M$.
\end{proposition}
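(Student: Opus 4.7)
The plan is to deduce the statement from Proposition \ref{right-to-left} by passing to the opposite triangulated category $\Cc^{op}$. The opposite inherits a triangulated Krull-Schmidt structure: morphisms are reversed, the shift functor becomes $\sum^{-1}$, and a distinguished triangle in $\Cc^{op}$ is obtained from one in $\Cc$ by reversing arrows and relabelling the shift. Under this translation, the given left AR-triangle $\sum^{-1}M \xrightarrow{w} N \xrightarrow{f} E \xrightarrow{g} M$ in $\Cc$ corresponds to a triangle $M \to E \to N \to \sum^{-1} M$ in $\Cc^{op}$ starting at $M$ and ending at $N$, and one checks that axiom (LAR3) in $\Cc$ (``every non-isomorphism $t\colon N \to D$ satisfies $t \circ w = 0$'') translates exactly to (RAR3) for this triangle in $\Cc^{op}$ (``every non-isomorphism $D \to N$ in $\Cc^{op}$ composes to zero with $w^{op}$''). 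Thus what we have in $\Cc^{op}$ is a right AR-triangle with starting object $M$.

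Next I would apply Proposition \ref{right-to-left} inside $\Cc^{op}$. The hypothesis required there is that the endomorphism ring of the starting object is Artinian; since $\Hom_{\Cc^{op}}(M,M) = \Hom_\Cc(M,M)$, this is precisely our hypothesis. The conclusion is that the rotation in $\Cc^{op}$, namely $\sum N \xrightarrow{-\sum w^{op}} M \xrightarrow{g^{op}} E \xrightarrow{f^{op}} N$, is a left AR-triangle in $\Cc^{op}$ starting at $M$. Translating back to $\Cc$, by reversing arrows and undoing the shift convention, yields exactly the claimed sequence $N \xrightarrow{f} E \xrightarrow{g} M \xrightarrow{-\sum w} \sum N$ in $\Cc$, and ``left AR in $\Cc^{op}$ starting at $M$'' translates literally to ``right AR in $\Cc$ ending at $M$'', with axioms (LAR1), (LAR2), (LAR3) in $\Cc^{op}$ corresponding respectively to (RAR1), (RAR2), (RAR3) in $\Cc$.

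Axioms (RAR1) and (RAR2) for the rotated triangle are in any case immediate from (LAR1), (LAR2), and the fact that $\sum$ is an auto-equivalence, so the whole content lies in (RAR3). The main obstacle is the careful bookkeeping of the conventions for $\Cc^{op}$, specifically tracking the signs of connecting morphisms and the swap of the ``starting'' versus ``ending'' role of objects under rotation; once this is laid out, no new ideas beyond Proposition \ref{right-to-left} are needed. If one preferred to argue directly inside $\Cc$ without invoking the opposite category, one would instead dualize each step of the proof of Proposition \ref{right-to-left}, using that $\Hom_\Cc(M,M)$ is local with nilpotent Jacobson radical (being local Artinian, as $M$ is indecomposable in the Krull-Schmidt category $\Cc$) to control compositions of the form $(-\sum w)\circ t$ for a non-isomorphism $t\colon D \to M$ from an indecomposable $D$.
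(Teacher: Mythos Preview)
Your approach is correct and is essentially a precise formalization of what the paper itself says: the text accompanying Proposition~\ref{left-to-right} only asserts that it ``can also be proved similarly'' to Proposition~\ref{right-to-left}, and your passage to $\Cc^{op}$ is exactly the categorical way to make that duality rigorous. One small point worth tightening: as rings one has $\Hom_{\Cc^{op}}(M,M)=\Hom_\Cc(M,M)^{op}$, not literal equality, so you should remark that the Artinian hypothesis (or, what is really used, nilpotence of the Jacobson radical in this local ring) transfers to the opposite ring.

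Where the paper does something genuinely different is in \S\ref{our-int}, which supplies an independent proof in the specific case $\Cc=K^b(\proj A)$. There the argument avoids $\Cc^{op}$ entirely and instead exploits the concrete duality $\Dc=\cHom_A(-,A)$ together with Lemma~\ref{ar-dual}: one dualizes the given left AR-triangle to a right AR-triangle ending at $N^*$, applies Proposition~\ref{right-to-left} (legitimately, since $H^*(M^*)$ has finite length so $\Hom(M^*,M^*)$ is Artinian), and then dualizes back. Your abstract opposite-category argument works in any Krull--Schmidt triangulated $\Cc$; the paper's alternative is tied to $K^b(\proj A)$ but has the advantage of staying inside the same category and using a duality functor that is needed elsewhere in the paper anyway.
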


 \begin{remark}\label{HuniqueAR}
 It is shown in \cite[1.4.3]{Happel} that AR triangles are unique up-to isomorphism of triangles. However the proof \emph{only works} in Hom-finite categories.
   We  prove uniqueness of AR-triangles in $\Cc$ in general.
 \end{remark}
 We first prove:
 \begin{lemma}\label{end-tri}
 Let $s \colon  N \xrightarrow{f} E \xrightarrow{g} M \xrightarrow{h} \sum N$  be a triangle in $\Cc$ with $N$ indecomposable and $h \neq 0$. Let
 $\theta \colon s \rt s$ be a morphism of triangles such that $\theta_M = 1_M$.  Then $\theta$ is an isomorphism of triangles.
 \end{lemma}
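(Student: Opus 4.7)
The key data is the rightmost square, which gives $(\Sigma \theta_N) \circ h = h \circ 1_M = h$, equivalently
\[
\Sigma(\theta_N - 1_N) \circ h = 0.
\]
The plan is to leverage this identity together with the Krull--Schmidt hypothesis on $\Cc$ to force $\theta_N$ to be an isomorphism, and then to invoke the triangulated five-lemma to conclude that $\theta_E$ is also an isomorphism.

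First I would observe that since $N$ is indecomposable in the Krull--Schmidt category $\Cc$, the endomorphism ring $\End_\Cc(N)$ is local. In particular, for the element $\theta_N \in \End_\Cc(N)$, either $\theta_N$ is a unit or $1_N - \theta_N$ is a unit. Suppose, for contradiction, that $\theta_N$ is not an isomorphism. Then $1_N - \theta_N$ is invertible, hence $\theta_N - 1_N$ is invertible, and applying the shift functor $\Sigma$ (which is an autoequivalence) gives that $\Sigma(\theta_N - 1_N)$ is invertible on $\Sigma N$. Composing the identity displayed above with the inverse of $\Sigma(\theta_N - 1_N)$ yields $h = 0$, contradicting the hypothesis $h \neq 0$. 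Therefore $\theta_N$ must be an isomorphism.

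Having established that both $\theta_N$ and $\theta_M = 1_M$ are isomorphisms, I would finish by applying the standard ``triangulated five-lemma'' (see \cite[Proposition~1.1.20]{N}): given a morphism of triangles in which two of the three vertical arrows are isomorphisms, the third is automatically an isomorphism. Hence $\theta_E$ is also an isomorphism, and therefore $\theta$ is an isomorphism of triangles.

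The argument is short and there is no real obstacle; the only subtlety worth flagging is that this proof does not require any Hom-finiteness assumption on $\Cc$, which is precisely why it will later allow us to upgrade Happel's uniqueness result \cite[1.4.3]{Happel} beyond the Hom-finite setting, as announced in Remark~\ref{HuniqueAR}.
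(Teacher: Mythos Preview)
Your proof is correct and follows essentially the same approach as the paper's: both use that the endomorphism ring of the indecomposable object is local to deduce from $(\Sigma\theta_N)\circ h = h$ that $\theta_N$ (equivalently $\Sigma\theta_N$) must be invertible, and then invoke the triangulated five-lemma for $\theta_E$. The only cosmetic difference is that the paper phrases the local-ring step as ``$u$ not invertible $\Rightarrow u$ lies in the Jacobson radical $\Rightarrow 1-u$ invertible'', while you use the equivalent formulation ``$\theta_N$ or $1_N-\theta_N$ is a unit''.
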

 \begin{proof}
 We note that $\sum N$ is indecomposable. If $u = \theta_{\sum N}$ is not an isomorphism then $u$ is in the Jacobson radical of $ \Rc = \Hom_{\Cc}(\sum N, \sum N)$.   As $s$ is a morphism of triangles we get $u \circ h = h$. Therefore $(1_\Rc- u)\circ h = 0$. Note $1_\Rc - u$ is invertible. This forces $h = 0$; a contradiction. So $u$ is an isomorphism. It follows that $\theta_N$ is an isomorphism. As $\theta_M, \theta_N$ are isomorphisms it is well known that $\theta_E$ is also an isomorphism. So $\theta$ is an isomorphism of triangles.
 \end{proof}
 We now prove uniqueness of right AR-triangles in $\Cc$.
 \begin{proposition}
\label{unique-AR}
In $\Cc$ consider the following two  right AR-triangles ending at $M$.
\begin{itemize}
  \item $s \colon N \xrightarrow{f} E \xrightarrow{g} M \xrightarrow{h} \sum N$
  \item $s' \colon  N' \xrightarrow{f'} E' \xrightarrow{g'} M \xrightarrow{h'} \sum N'$
\end{itemize}
Then $s \cong s'$.
 \end{proposition}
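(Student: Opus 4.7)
The plan is to follow the classical strategy of lifting identity maps via the AR property, replacing Happel's Hom-finiteness hypothesis by the just-proved Lemma \ref{end-tri}. The key observation is that neither $g$ nor $g'$ is a retraction: if $g$ were, the triangle $s$ would split and its connecting map $h$ would vanish, contradicting (RAR2), and similarly for $g'$.

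Applying (RAR3g) to $s'$ with the non-retraction $g$ yields $q \colon E \rt E'$ with $g' \circ q = g$; axiom (TR3) then extends the commutative square $(q, 1_M)$ to a morphism of triangles $(p, q, 1_M) \colon s \rt s'$ for some $p \colon N \rt N'$. Symmetric use of (RAR3g) on $s$ with $g'$ provides a morphism $(p', q', 1_M) \colon s' \rt s$.

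The heart of the argument is to conclude that these maps are mutual inverses. I would form the composite $(p'p, q'q, 1_M) \colon s \rt s$, a morphism of triangles whose third component is $1_M$. Since $N$ is indecomposable and $h \neq 0$, Lemma \ref{end-tri} forces this composite to be an automorphism of $s$, so in particular $q'q$ and $p'p$ are isomorphisms; swapping the roles of $s$ and $s'$ shows that $qq'$ and $pp'$ are likewise isomorphisms. As $q$ has both a left and a right inverse, it is invertible, and similarly for $p$; hence $(p, q, 1_M) \colon s \rt s'$ is an isomorphism of triangles.

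The only genuinely nontrivial ingredient is Lemma \ref{end-tri}, which is precisely the replacement for Happel's Hom-finiteness step \cite[1.4.3]{Happel}; with it in hand the remainder is a routine exercise in the triangulated axioms. The main conceptual point to verify carefully is that each of the morphisms of triangles produced by (RAR3g) plus (TR3) can indeed be chosen with identity component at $M$, so that the composition really has third component $1_M$ and the hypothesis of Lemma \ref{end-tri} is met.
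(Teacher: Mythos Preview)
Your proof is correct and follows essentially the same approach as the paper: both observe that $g,g'$ are non-retractions, use (RAR3g) to produce morphisms of triangles $s\to s'$ and $s'\to s$ with identity at $M$, and then invoke Lemma~\ref{end-tri} on the two composites to conclude. Your final paragraph spelling out why $p$ and $q$ are themselves invertible is a little more explicit than the paper's ``so $s\cong s'$,'' but the argument is the same.
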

 \begin{proof}
   As $h, h'$ are non-zero we get that $g, g'$ are not retractions; see \cite[1.4]{Happel}. By (AR3g) there exists lifts $q \colon E' \rt E$ and $q' \colon E \rt E'$ lifting $g'$ and $g$ respectively. So we have morphism of triangles $\alpha \colon s \rt s'$ and $\beta \colon s' \rt s$ with
   $\alpha_M = \beta_M = 1_M$. Consider $\beta\circ \alpha \colon s \rt s$. As $N$ is indecomposable and $h \neq 0$ we get by Lemma \ref{end-tri} that $\beta \circ \alpha$ is an isomorphism. Similarly $\alpha \circ \beta$ is an isomorphism. So $s \cong s'$.
 \end{proof}

\begin{definition}
  Let $s \colon N \xrightarrow{f} E \xrightarrow{g} M \xrightarrow{h} \sum N$ be a right AR-triangle ending at $M$. Then by Proposition \ref{unique-AR} $N$ is  determined
  by $M$ (upto a not-necessarily unique  isomorphism). Set $\tau^l(M) = N$.
\end{definition}
\begin{remark}
Dually one can show that left AR-triangle starting at $N$ are isomorphic.
\end{remark}

We now concentrate on when $\Cc = K^b(\proj A)$ where $(A,\m)$ is a Henselian Noetherian local ring. We consider the behaviour under the duality operator. More precisely we prove
\begin{lemma}
  \label{ar-dual} Let $(A,\m)$ be a Henselian Noetherian local ring. In $K^b(\proj A)$ we have
  \begin{enumerate}[ \rm(1)]
    \item    Let $s \colon \Ub \rt \Kb \rt \Vb \xrightarrow{h} \Ub[1]$ be a right AR-triangle ending at $\Vb$. Consider $h^* \colon \Ub^*[-1] \rt \Vb^*$. Then a triangle
    $l \colon \Ub^*[-1] \xrightarrow{h^*} \Vb^* \rt \Wb \rt \Ub^*$ is a left AR-triangle starting at $\Vb^*$
\item Let $l \colon \Ub[-1] \xrightarrow{h} \Vb \rt \Wb \rt \Ub$ be a left AR-triangle starting at $\Vb$. Consider $h^* \colon \Vb^* \rt \Ub^*[1]$. Then a triangle
    $s \colon \Ub^* \rt \Kb \rt \Vb^* \xrightarrow{h^*} \Ub^*[1]$ be a right AR-triangle ending at $\Vb^*$.
  \end{enumerate}
  \end{lemma}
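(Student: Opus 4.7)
The plan is to pull the AR structure across the duality $\Dc = \cHom(-, A)$, which by \ref{duality-p} is a contravariant triangulated equivalence $K^b(\proj A) \rt K^b(\proj A)^{\mathrm{op}}$ satisfying $\Dc^2 \cong 1$. For part (1), I apply $\Dc$ to the right AR-triangle $s \colon \Ub \xrightarrow{f} \Kb \xrightarrow{g} \Vb \xrightarrow{h} \Ub[1]$. Since $\Dc$ is contravariant triangulated and $\Dc(\Xb[1]) \cong \Dc(\Xb)[-1]$, this produces a distinguished triangle
$$\Ub^*[-1] \xrightarrow{h^*} \Vb^* \xrightarrow{g^*} \Kb^* \xrightarrow{f^*} \Ub^*$$
in $K^b(\proj A)$. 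Taking $\Wb = \Kb^*$ realises a triangle of the form of $l$; any other completion of $h^*$ is isomorphic to this one, so the left AR-triangle property depends only on the map $h^*$.

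Next I verify the axioms (LAR1)--(LAR3) for $l$ starting at $\Vb^*$. Axiom (LAR1) is immediate because an equivalence preserves indecomposable objects, so $\Vb^*$ and $\Ub^*$ remain indecomposable. For (LAR2), faithfulness of $\Dc$ forces $h^* = \Dc(h) \neq 0$ from $h \neq 0$. For (LAR3), let $D$ be indecomposable and $t \colon \Vb^* \rt D$ a non-isomorphism. Using $\Dc^2 \cong 1$, I may write $D \cong \Dc(D')$ with $D' = \Dc(D)$ indecomposable, and express $t = \Dc(t')$ for some $t' \colon D' \rt \Vb$; since $\Dc$ is an equivalence, $t'$ is a non-isomorphism as well. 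Axiom (RAR3) applied to $s$ gives $h \circ t' = 0$, and applying $\Dc$ yields $t \circ h^* = \Dc(h \circ t') = 0$, verifying (LAR3).

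Part (2) is proved dually: apply $\Dc$ to the left AR-triangle $l$ starting at $\Vb$ to produce a distinguished triangle $\Ub^* \rt \Wb^* \rt \Vb^* \xrightarrow{h^*} \Ub^*[1]$, then translate (LAR1)--(LAR3) into (RAR1)--(RAR3) through $\Dc$ in exactly the same manner (the roles of source and target of the non-isomorphism $t$ swap, but $\Dc^2 \cong 1$ takes care of this). The main technical point throughout is the careful bookkeeping of shifts and the reversal of arrows under the contravariant equivalence so that the resulting triangle has the precise form stated; the rest — preservation of indecomposability, of nonvanishing of maps, and of the status of being a non-isomorphism — transports cleanly because $\Dc$ is an equivalence of categories.
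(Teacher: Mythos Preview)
Your proof is correct and follows essentially the same approach as the paper: both arguments dualize to check (LAR1)--(LAR3) directly, using that $\Dc$ preserves indecomposability, that $(h^*)^* = h$ forces $h^* \neq 0$, and that a non-isomorphism $t \colon \Vb^* \rt D$ dualizes to a non-isomorphism $t^* \colon D^* \rt \Vb$ so that (RAR3) gives $h \circ t^* = 0$ and hence $t \circ h^* = 0$. You are a bit more explicit than the paper about why the dualized sequence is again a distinguished triangle, but the substance is identical.
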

\begin{proof}
  We prove (1). The assertion (2) can be proved similarly.

(1) Clearly $\Ub^*$ and $\Vb^*$ are indecomposable. Also as $(h^*)^* = h$ we get $h^* \neq 0$. Let $\Db \in K^b(\proj A)$ be  indecomposable and let $t \colon \Vb^* \rt \Db$ is not an isomorphism. Then $t^* \colon \Db^* \rt \Vb$ is not an isomorphism. Also $\Db^*$ is indecomposable. As $s$ is an AR-sequence ending at $\Vb$ we get that $h\circ t^* = 0$. So $t\circ h^* = 0$. It follows that $l$ is an AR-sequence starting at $\Vb^*$.
\end{proof}

\s \label{our-int} As an application of Lemma \ref{ar-dual} we give a proof of Proposition \ref{left-to-right} in the case of our interest.
\begin{proof}[Proof of \ref{left-to-right} when $\Cc = K^b(\proj A)$]
Let $s$  be the given left AR-triangle. By \ref{ar-dual} we get that we have a right AR-triangle ending at $N^*$.
$$l \colon  M^* \rt E \rt     N^*\xrightarrow{w^*} M[1] $$
As $H^*(M) $ has finite length it follows that $H^*(M)$ has finite length. Thus \\ $\Hom_{K^b(\proj A)}(M^*, M^*)$ is Artinian. So by \ref{right-to-left} the rotation of $l$
$$ t \colon N^*[-1] \xrightarrow{-w^*[-1]} M^* \rt E \rt N^*,$$
is a left AR-triangle starting ar $M^*$. So again by \ref{ar-dual} we get a right AR triangle ending at $M$,
$$p \colon          N \rt W \rt                        M \xrightarrow{-w[1]} N[1]$$
is a right AR-triangle ending at $M$. As a triangle is determined upto isomorphism by any of its maps the result follows.
\end{proof}

In a fundamental work Reiten and Van den Bergh, \cite{RV}, related existence of AR-triangles to existence of a Serre functor.
Let $(A,\m)$ be a Noetherian local ring and let $E$ be the injective hull of $k = A/\m$.
Set $(-)^\vee = \Hom_A(-, E)$.
 Let $\Cc$ be a Hom-finite $A$-triangulated category.  By a right Serre-functor  on $\Cc$ we mean an additive functor $F \colon \Cc \rt \Cc$ such that we have isomorphism
 \[
 \eta_{C, D} \colon \Hom_{\Cc}(C, D) \rt \Hom_{\Cc}(D, F(C))^\vee,
 \]
 for any $C, D \in \Cc$ which are natural in $C$ and $D$. It can be shown that if $F$ is a right Serre functor
 then $F$ is fully faithful. If $F$ is also dense then we call $F$ to be a Serre functor.
 We will use the following result:
 \begin{theorem}
 \label{RV-main}[see \cite[Theorem I.2.4]{RV}]
 Let $\Cc$ be a Hom-finite $A$-linear triangulated category. Then the following are equivalent
 \begin{enumerate}[\rm (i)]
 \item
 $\Cc$ has AR-triangles.
 \item
 $\Cc$ has a Serre-functor.
 \end{enumerate}
 \end{theorem}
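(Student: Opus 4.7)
The plan is to follow the strategy of Reiten--Van den Bergh, using the Krull--Schmidt hypothesis to reduce constructions to the indecomposable case. Throughout I would use the identity $\eta_{C, D}(f)(g) = \chi_C(g \circ f)$, with $\chi_C = \eta_{C, C}(1_C) \in \Hom_\Cc(C, F(C))^\vee$, which follows from naturality of $\eta$ in both slots.

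For (ii) $\Rightarrow$ (i), fix an indecomposable $M$ and set $R = \Hom_\Cc(M,M)$, a local $A$-algebra. Serre duality identifies $\Hom_\Cc(M, F(M))$ with $R^\vee$; pick $h \colon M \rt F(M)$ corresponding to a nonzero $A$-linear functional $R \rt E$ that kills $\rad R$, put $N = F(M)[-1]$, and complete $h$ to a triangle $N \rt K \rt M \xrightarrow{h} N[1]$. For any non-isomorphism $t \colon D \rt M$ with $D$ indecomposable and any $f \colon M \rt D$, the composition $t \circ f \in R$ cannot be an isomorphism (else the indecomposable $M$ would contain $D$ as a summand) and so lies in $\rad R$; hence $\eta_{M, D}(f)(h \circ t) = \chi_M(h \circ t \circ f) = 0$, and non-degeneracy of $\eta_{M, D}$ forces $h \circ t = 0$, verifying (RAR3). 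This produces right AR-triangles at every indecomposable; left AR-triangles at every indecomposable follow by rotating the right AR-triangle ending at $F(N)$ via Proposition \ref{right-to-left}, using essential surjectivity of $F$ (Serre functors are equivalences).

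For (i) $\Rightarrow$ (ii), for each indecomposable $M$ fix the right AR-triangle $\tau^l(M) \rt E_M \rt M \xrightarrow{h_M} F(M) = \tau^l(M)[1]$ (unique by Proposition \ref{unique-AR}) and extend $F$ additively. The connecting map $h_M$ determines a canonical functional $\chi_M \colon \Hom_\Cc(M, F(M)) \rt E$ whose kernel equals $\rad R \cdot \Hom_\Cc(M, F(M))$ by (RAR3g), giving a one-dimensional quotient over $R/\rad R$. The pairing $(f, g) \mapsto \chi_C(g \circ f)$ on $\Hom_\Cc(C, D) \times \Hom_\Cc(D, F(C))$ is perfect for indecomposable $C, D$ and yields the candidate isomorphisms $\eta_{C, D}$, extended biadditively.

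The main obstacle is promoting $F$ from an assignment on objects to a genuine \emph{functor} and verifying naturality of $\eta$. Given $\phi \colon M \rt M'$ between indecomposables, one defines $F(\phi)$ by first lifting the composite $\phi \circ (E_M \rt M) \colon E_M \rt M'$ through the AR-triangle at $M'$ via (RAR3l) (using that this composite is not a retraction when $\phi$ is not an isomorphism), then invoking the morphism-of-triangles axiom to complete the diagram; the induced fourth map is $F(\phi)$. The lift is not unique, but its ambiguity is confined to the radical (by uniqueness arguments in the spirit of Lemma \ref{end-tri}), so $F(\phi)$ is well-defined. Verifying functoriality of $F$ and bi-naturality of $\eta$ is the technical bookkeeping at the heart of the Reiten--Van den Bergh argument. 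Finally, the existence of left AR-triangles supplies a quasi-inverse to $F$, upgrading a right Serre functor to a genuine Serre functor.
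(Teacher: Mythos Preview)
The paper does not supply its own proof of this statement: it is quoted from Reiten--Van den Bergh with the remark that ``the same proof generalizes to our situation'' (from $k$-linear to $A$-linear Hom-finite categories, replacing the field by the Matlis dual $(-)^\vee = \Hom_A(-,E)$). So there is nothing in the paper to compare your argument against; your proposal is, appropriately, a sketch of the Reiten--Van den Bergh proof itself transported to the $A$-linear setting.

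A couple of small points on your sketch. In (ii)$\Rightarrow$(i), to obtain a left AR-triangle \emph{starting} at a given indecomposable $N$ you need a right AR-triangle whose first term is $N$, i.e.\ one ending at $M$ with $F(M)[-1]\cong N$; that means $M\cong F^{-1}(N[1])$, not $F(N)$ as you wrote. In (i)$\Rightarrow$(ii), your description of $F(\phi)$ via a lift through the AR-triangle is the right idea, but the assertion that the ambiguity ``is confined to the radical, so $F(\phi)$ is well-defined'' is too quick: different lifts can genuinely produce different candidate maps $F(M)\to F(M')$, and the actual argument in \cite{RV} pins down $F(\phi)$ (and functoriality) via the duality pairing rather than by a direct uniqueness claim for the lift. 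You correctly flag this as the technical core, but the phrasing suggests more than the lift alone delivers.
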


 \begin{remark}
 In \cite{RV}, Theorem I.2.4 is proved for $k$-linear Hom-finite triangulated categories where $k$ is a field. However the same proof generalizes to our situation.
 \end{remark}

 \section{Krull-Schmidt property of $D^b_f(A)$}
In this section we prove:
\begin{lemma}\label{tea}
Let $(A,\m)$ be a Noetherian local ring. Then $D^b_f(A)$ is a Hom-finite, triangulated Krull-Schmidt category.
\end{lemma}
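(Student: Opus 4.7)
The plan is to split the statement into three parts and prove each in turn: (a) $\Hom$-sets in $D^b_f(\md A)$ are of finite length as $A$-modules, (b) every endomorphism ring is an Artinian (hence semi-perfect) $A$-algebra, and (c) idempotents in $D^b_f(\md A)$ split. The conjunction of (b) and (c) gives Krull-Schmidt: a semi-perfect endomorphism ring decomposes $1=e_1+\cdots+e_r$ into orthogonal primitive idempotents with local corner rings, and once the $e_i$ lift to genuine direct summands, the object decomposes into indecomposables with local endomorphism rings.

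For the Hom-finiteness, I would argue by double induction on the amplitudes of $\Xb$ and $\Yb$. Using the smart truncation triangle
\[
\tau_{\leq n-1}\Xb \rt \tau_{\leq n}\Xb \rt H^n(\Xb)[-n] \rt \tau_{\leq n-1}\Xb[1]
\]
and the long exact $\Hom$-sequence obtained by applying $\Hom_{D^b(A)}(-,\Yb)$, I reduce to the case $\Xb = M[-i]$ for a single finite length module $M$; a symmetric argument reduces $\Yb$ to $N[-j]$. Then
\[
\Hom_{D^b(A)}(M[-i],N[-j]) = \Ext_A^{j-i}(M,N),
\]
which is finitely generated over $A$ (since $M$ is) and annihilated by a power of $\m$ (since $M$ or $N$ is), hence of finite length. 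Here I also use the fact that negative $\Ext$ vanishes and one of the reductions only requires finitely many nonvanishing cohomologies, which is automatic for bounded complexes.

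Next, by part (a) applied to $\Xb = \Yb$, the ring $\End_{D^b_f(A)}(\Xb)$ has finite length over $A$ and is therefore Artinian; every Artinian ring is semi-perfect, so the needed decomposition of $1$ into primitive orthogonal idempotents with local corner rings exists. For the idempotent-splitting step, I would invoke the known fact that $D^b(\md A)\simeq K^{-,b}(\proj A)$ has split idempotents: the heart $\md A$ of the standard $t$-structure is abelian and has split idempotents, and for bounded derived categories of abelian categories with this property the derived category inherits it. Since $D^b_f(\md A)$ is thick in $D^b(\md A)$ (any summand of a complex with finite length cohomology again has finite length cohomology), idempotents split inside $D^b_f(\md A)$ as well.

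The principal obstacle is the idempotent-splitting assertion for $D^b(\md A)$, which is more subtle than it looks because a chain-level lift of an idempotent is in general only a homotopy idempotent. If one prefers a self-contained argument rather than citing the general fact, one can fix an idempotent $\bar e \in \End_{D^b(A)}(\Xb)$, represent $\Xb$ by a bounded-above complex of finitely generated projectives via the equivalence $D^b(\md A) \simeq K^{-,b}(\proj A)$, and lift $\bar e$ to a chain endomorphism $e$; then one constructs the summand by producing the correct strict idempotent, using that the Artinian structure of $\End_{D^b(A)}(\Xb)$ forces the relevant homotopies to stabilize. Once idempotents are known to split, the Krull-Schmidt conclusion follows from the semi-perfect decomposition of each endomorphism ring combined with standard arguments about finite indecomposable decompositions.
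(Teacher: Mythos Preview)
Your argument is correct, and its overall architecture---thick subcategory, Hom-finiteness, idempotent completeness, hence Krull--Schmidt---matches the paper's. The details differ in two places, and the comparison is worth noting.

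For Hom-finiteness, the paper does not d\'evisser via truncation triangles. Instead it passes to $K^{-,b}(\proj A)$, forms the Hom complex $\mathcal{E}=\cHom_A(\Fc,\Yb)$ with $\Fc$ a projective resolution of $\Xb$, observes that $H^0(\mathcal{E})$ is a finitely generated $A$-module, and then localizes at a prime $P\neq\m$: since $H^*(\Fc)_P=0$ forces $\Fc_P=0$ in $K^{-,b}(\proj A_P)$, one gets $H^0(\mathcal{E})_P=0$, so $\Hom(\Xb,\Yb)$ has finite length. Your truncation/Ext reduction is equally valid and arguably more elementary; the paper's localization argument is shorter and avoids any induction.

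For the decomposition into indecomposables, the paper does not invoke semi-perfectness directly. It simply observes that if $c=\sum_i\ell(H^i(\Xb))$ then $\Xb$ cannot split into more than $c$ nonzero summands, so a finite indecomposable decomposition exists; then Hom-finiteness plus idempotent completeness forces local endomorphism rings on indecomposables. Your route through the semi-perfect structure of the Artinian endomorphism ring gives the same conclusion and is a standard alternative.

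For idempotent splitting both you and the paper cite the known result for $D^b(\md A)$ (the paper references Balmer--Schlichting and Le--Chen) and then use thickness. Your ``self-contained'' sketch at the end is vague as stated---lifting a homotopy idempotent to a strict one is genuinely delicate and the Artinian hypothesis alone does not obviously make homotopies ``stabilize''---but since you also give the citation route, this does not affect correctness.
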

\begin{proof}
It is elementary that $D^b_f(A)$ is a thick subcategory of $D^b(A)$. So in particular it is a triangulated category.

Next we prove that idempotents split in $D^b_f(A)$. Let $\Xb \in D^b_f(A)$. Let $e \colon \Xb \rt \Xb $ be an idempotent. Then as idempotents split in $D^b(A)$ (see \cite[2.10]{BS} or \cite[Corollary A]{LC}) we have that there exists $\Ub \in D^b(A)$ and maps
$\alpha \colon \Xb \rt \Ub$ and $\beta \colon \Ub \rt \Xb$ such that $\beta\circ \alpha = e$ and $\alpha \circ \beta  = 1_{\Ub}$. Taking cohomology we get an inclusion $H^i(\beta) \colon  H^i(\Ub) \rt H^i(\Xb)$ for all $i \in \Z$. It follows that $\Ub \in D^b_f(A)$. Thus idempotents split in $D^b_f(A)$.

Set $\Kc = \Kc^{-,b}(\proj A)$. It is well-known that $D^b(A) \cong \Kc$. Let $\Xb, \Yb$ be bounded complexes in $D^b_f(A)$. Let $\Fc$ be a projective resolution of $\Xb$ with $\Fc^i$ finitely generated free $A$-modules and $\Fc^i = 0$ for $i \gg 0$.
Then
we know that $$\Hom_{D^b(A)}(\Xb, \Yb) \cong \Hom_{\Kc}(\Fc, \Yb).$$

Let $\Ec = \cHom_A(\Fc, \Yb)$ be the complex of homomorphisms from $\Fc$ to $\Yb$; see \ref{chom}. Then it is clear that $\Ec^i = 0$ for all $i \gg 0$ and $\Ec^i$ is finitely generated $A$-module for all $i \in \Z$ (note $\Yb$ is a bounded complex). So $H^i(\Ec)$ is a  finitely generated $A$-module for all $i \in \Z$.
In particular $H^0(\Ec))  = \Hom_{\Kc}(\Fc, \Yb)$ is finitely generated $A$-module.
Let $P \neq \m$ be a prime ideal in $A$. Set
${\Kc}_P = \Kc^{-,b}(\proj A_P)$.

 We note that  ${\Ec}_P \cong \cHom_{A_P}({\Fc}_P, {\Yb}_P)$.
Furthermore \\  $H^0(\Ec))_P  \cong \Hom_{\Kc_P}({\Fc}_P, {\Yb}_P)$. But ${\Fc}_P$ has zero homology. As it is bounnded above we get that ${\Fc}_P = 0$ in $\Kc_P$. Thus  $H^0(\Ec))_P = 0$ for any prime $P \neq \m$. As it is also finitely generated we get that $H^0(\Ec))$  has finite length. Thus $D^b_f(A)$ is a Hom-finite category

Next we show that any object in $D^b_f(A)$ is a finite direct sum of indecomposable objects.
Let $\Xb \in D^b_f(A)$. Let $c = h(\Xb) = \sum_i H^i(\Xb)$. Then it is clear that $\Xb$ cannot be a direct sum of $c+1$ non-zero complexes in $D^b_f(A)$. Thus $\Xb$ is a finite direct sum of indecomposable objects in $D^b_f(A)$. As $D^b_f(A)$ is Hom-finite  with split-idempotents it follows that the endomorphism rings of indecomposable objects are local, see \cite[1.1.5]{LW}. It follows that $D^b_f(A)$ is a Krull-Schmidt category.
\end{proof}

   \section{Proof of Theorem \ref{happel-gen}( (i) $\implies$ (ii))}
In this section we give a proof of Theorem \ref{happel-gen}( (i) $\implies$ (ii)). We need the following preliminaries to prove our result.
\s \label{setup-h}Let $(A,\m)$ be a Noetherian local ring with residue field $k$. Let $E = E(k)$ the injective hull of $k$. Let $\Yb \in C^b(\md(A))$ and let $\Xb \in C^b(\proj A)$. Then note that the natural map
$$ \Psi_{\Yb, \Xb} \colon \Yb \otimes \cHom_A(\Xb, A) \rt \cHom_A(\Xb, \Yb),$$
defined by
$$\Psi_{\Yb, \Xb}(y\otimes f)(x) =  f(x)y$$
is an isomorphism of complexes.

\s \label{dual-E}
Set
$$\Dc(-) =  \cHom_A(-, A) \quad \text{and} \quad \Ec(-) = \cHom_A(-, E).$$
By \ref{setup-h} we have that for any $\Yb \in C^b(\md(A))$ and  $\Xb \in C^b(\proj A)$ we have a natural isomorphism
\begin{align*}
  \Ec (\cHom_A(\Xb, \Yb)) &= \cHom_A(\cHom_A(\Xb, \Yb), E), \\
  &\cong  \cHom_A(\Yb\otimes \Dc(\Xb), E) \\
   &\cong \cHom_A(\Yb, \Ec \circ \Dc(\Xb) ).
\end{align*}

\s Instead of working with $C^b(\md(A))$ it is bit convenient to work with $C^{-,fg}(A)$ consisting of bounded complexes with finitely generated cohomology (the modules in the complexes need not be finitely generated. Let $K^{-,fg}(A)$ be it's homotopy category. After inverting all quasi-isomorphisms we obtain $D^{-,fg}(A)$. It is easy to show that $D^{-,fg}(A) \cong D^b(\md(A))$. Also recall the equivalence from $D^{-,fg}(A)$ to $K^{-,b}(\proj A)$  ($K^{+,b}(\Inj A)$)is given by the projective ( injective) resolution functor $\pb$ ($\ib$).

\s\label{reg-hensel} Now assume that $(A,\m)$ be a Henselian regular local ring. Consider the functor $F$ which is the following composite of $A$-linear functors:
$$ K^b_f(\proj A) \xrightarrow{\Dc} K^b_f(\proj A) \xrightarrow{\Ec} K^b_f(\Inj A) \xrightarrow{\pb} K^b_f(\proj A).$$
\begin{theorem}
  \label{serre} (with hypotheses as in \ref{reg-hensel}) The functor $F$ is dense.
\end{theorem}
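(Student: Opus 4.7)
The plan is to prove the stronger assertion that $F$ is naturally isomorphic to the shift functor $[d]$ with $d = \dim A$; density then follows at once, since any $\Xb \in K^b_f(\proj A)$ equals $F(\Xb[-d])$.

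The first step is formal. Applying \ref{setup-h} with $E$ (placed in degree zero) in the role of $\Yb$ and $\Dc(\Yb)$ in the role of $\Xb$, and using $\Dc^2 \cong 1$ from \ref{dual}, one obtains a natural isomorphism
\[
E \otimes_A \Yb \;\cong\; \cHom_A(\Dc(\Yb), E) \;=\; \Ec(\Dc(\Yb)).
\]
Because $\Yb$ is a bounded complex of finitely generated free modules, $E \otimes_A \Yb$ represents the derived tensor product $E \otimes^{\mathbf{L}}_A \Yb$; applying $\pb$ merely replaces this injective-valued complex by a quasi-isomorphic bounded complex of finitely generated free modules, which exists because $A$ is regular and the cohomology has finite length. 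Hence $F(\Yb) \cong E \otimes^{\mathbf{L}}_A \Yb$ in $D^b_f(\md(A)) \cong K^b_f(\proj A)$.

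The second step identifies $E \otimes^{\mathbf{L}}_A \Yb$ with $\Yb[d]$. Since $A$ is regular of dimension $d$, its only non-vanishing local cohomology module is $H^d_{\m}(A) \cong E$, so $R\Gamma_{\m}(A) \cong E[-d]$, equivalently $E \cong R\Gamma_{\m}(A)[d]$. Using that $R\Gamma_{\m}(-) \cong R\Gamma_{\m}(A) \otimes^{\mathbf{L}}_A (-)$, one obtains
\[
E \otimes^{\mathbf{L}}_A \Yb \;\cong\; R\Gamma_{\m}(\Yb)[d].
\]
Every cohomology module of $\Yb$ has finite length and is therefore $\m$-power torsion, so $R\Gamma_{\m}(\Yb) \cong \Yb$. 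Combining the two steps yields a natural isomorphism $F(\Yb) \cong \Yb[d]$.

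The most delicate point is keeping the above identifications natural in $\Yb$ through $\pb$ and $R\Gamma_{\m}$. If one wishes to bypass local cohomology entirely, it suffices to verify the single case $F(\pb(k)) \cong \pb(k)[d]$: here $\Dc(\pb(k))$ represents $R\Hom_A(k, A) \cong k[-d]$ (by a direct Koszul computation using that $A$ is regular of dimension $d$), so $\Ec(\Dc(\pb(k)))$ represents $R\Hom_A(k[-d], E) \cong k[d]$, and applying $\pb$ produces $\pb(k)[d]$. Because $F$ is triangulated and, by \ref{dual-E}, a right Serre functor on the Krull--Schmidt category $K^b_f(\proj A)$ (hence fully faithful), its essential image is a thick subcategory; it contains $\pb(k)[d]$ and therefore $\pb(k)$, and since $\pb(k)$ generates the whole thick subcategory $K^b_f(\proj A)$, density follows.
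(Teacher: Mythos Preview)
Your argument is correct, and it takes a genuinely different route from the paper's.

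The paper proves density by an explicit inversion: given $\Ub \in K^b_f(\proj A)$, it first constructs a bounded injective resolution $\Ib$ of $\Ub$ with each $\Ib^n$ a finite direct sum of copies of $E$ (Lemma~\ref{fin-inj}), observes $\Ec(\Ib)\in K^b_f(\proj\widehat A)$, descends this to some $\Vb\in K^b_f(\proj A)$ via the completion equivalence (Proposition~\ref{tensor}), and then sets $\Xb=\Dc(\Vb)$; a direct check gives $F(\Xb)\cong\Ub$. Your approach instead identifies $F$ with the shift $[d]$ through the isomorphisms $\Ec\circ\Dc(\Yb)\cong E\otimes_A\Yb\cong E\otimes^{\mathbf L}_A\Yb\cong R\Gamma_\m(\Yb)[d]\cong\Yb[d]$, or alternatively computes only $F(\pb(k))\cong\pb(k)[d]$ and invokes that a right Serre functor is fully faithful, hence has thick essential image, and that $\pb(k)$ thickly generates $K^b_f(\proj A)$. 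Your route is shorter and more conceptual, and it yields the stronger statement $F\cong[d]$; the paper's route is entirely self-contained, needing neither the local-cohomology identities $R\Gamma_\m(-)\cong R\Gamma_\m(A)\otimes^{\mathbf L}_A(-)$ and $R\Gamma_\m(A)\cong E[-d]$ nor the thick-generation fact (none of which are established in the paper).

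Two small points. First, \ref{setup-h} is stated only for $\Yb\in C^b(\md(A))$, i.e.\ bounded complexes of \emph{finitely generated} modules, and $E$ is not finitely generated when $d>0$; the isomorphism $E\otimes_A\Dc(\Xb)\cong\cHom_A(\Xb,E)$ you need is of course still true for $\Xb\in C^b(\proj A)$, but strictly speaking you are extending the lemma. Second, for the density statement alone you do not need naturality of $F(\Yb)\cong\Yb[d]$; the ``delicate point'' you flag is only relevant if you want the stronger conclusion $F\cong[d]$ as functors.
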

We give a proof of Theorem \ref{happel-gen} assuming Theorem \ref{serre}.
\begin{proof}[ Proof of Theorem \ref{happel-gen}]
Set $(-)^\vee = \Hom_A(-, E)$.
Then we have for $\Xb, \Yb \in \Kc = K^b_f(\proj A)
$\begin{align*}
   \Hom_{\Kc}(\Xb, \Yb)^\vee &= (H^0(\cHom_A(\Xb, \Yb)))^\vee \\
    &\cong H^0(\Ec(\cHom_A(\Xb, \Yb))) \\
    &\cong  H^0(\cHom_A(\Yb, (\Ec\circ\Dc)(\Xb))) \\
    &= \Hom_{K^{-,fg}(A)}(\Yb, (\Ec\circ\Dc)(\Xb)) \\
    &\cong \Hom_{D^{-,fg}(A)}(\Yb, (\Ec\circ\Dc)(\Xb)) \\
    &= \Hom_{D^{-,fg}(A)}(\Yb, \pb((\Ec\circ\Dc)(\Xb)))  \\
    &\cong \Hom_{\Kc}(\Yb, F(\Xb)).
 \end{align*}
 Taking dual's \wrt \ $E$ again we get (as the $A$-modules considered have finite length) an isomorphism
 $$\eta_{\Xb,  \Yb} \colon \Hom_{\Kc}(\Xb, \Yb) \rt \Hom_{\Kc}(\Yb, F(\Xb))^\vee.$$
 It can be easily seen that $\eta_{\Xb,  \Yb} $ is natural in  $\Xb$ and   $\Yb$. So $F$ is a right Serre-functor on $ \Kc$. By \ref{serre} $F$ is also dense. Thus $F$ is a Serre-functor on $\Kc$. By \ref{RV-main} we get that $\Kc$ has AR-triangles.
\end{proof}
It remains to prove Theorem \ref{serre}. We need a few preliminaries to prove this result. The following result is essentially known. We give a proof due to lack of a reference.
\begin{lemma}\label{fin-inj}
(with hypotheses as in \ref{reg-hensel}) Let $\Xb \in K^b_f(A)$. Then there is a injective resolution $\Ib$ of $\Xb$ such that $\Ib$ is bounded and for all $n \in \Z$ we have $\Ib^n = E^{\alpha_n}$ for some $\alpha_n < \infty$.
\end{lemma}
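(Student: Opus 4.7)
The plan is to prove the lemma via a thick subcategory argument: show that the class $\mathcal{C}\subseteq D^b(\md(A))$ of those complexes admitting an injective resolution of the required form (bounded, with every term a finite direct sum of copies of $E$) is a thick subcategory containing the residue field $k=A/\m$, and then invoke the standard identity $D^b_f(\md(A))=\thick_{D^b(\md(A))}(k)$.

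The base case $k\in\mathcal{C}$ is immediate: since $A$ is regular of dimension $d$, hence Gorenstein, the injective dimension of $k$ is at most $d$, and because $\Ass_A k=\{\m\}$, every term of the minimal injective resolution of $k$ is a finite direct sum of copies of $E$; in degree $i$ it equals $E^{\mu^i}$ with $\mu^i=\dim_k\Ext^i_A(k,k)=\binom{d}{i}$. To verify that $\mathcal{C}$ is thick, closure under shifts is obvious; for closure under direct summands, $E$ is indecomposable with local endomorphism ring $\Hom_A(E,E)=\wh A$, so by Krull--Schmidt any direct summand of $E^{\alpha_n}$ is of the form $E^{\beta_n}$ with $\beta_n\le\alpha_n$, and a direct sum decomposition of the object induces (via uniqueness up to isomorphism of the minimal injective resolution) one on the resolution; for closure under triangles, starting from $\Xb\to\Yb\to\Zb\to\Xb[1]$ with $\Xb,\Yb\in\mathcal{C}$ admitting injective resolutions $\Ib$ and $\Ib'$ of the required form, I would lift the first map to a chain map $\tilde f\colon\Ib\to\Ib'$ (possible since $\Ib'$ consists of injectives), and observe that $\cone(\tilde f)$ is an injective resolution of $\Zb$ whose $n$-th term is the direct sum of terms of $\Ib$ and $\Ib'$ in consecutive degrees, hence again a finite direct sum of $E$, and is clearly bounded.

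It remains to verify $D^b_f(\md(A))\subseteq\thick(k)$. Given $\Xb\in D^b_f(\md(A))$, iterated application of the good truncation triangles $\tau^{<j}\Xb\to\Xb\to\tau^{\ge j}\Xb\to\tau^{<j}\Xb[1]$ strictly decreases the number of nonzero cohomology groups and reduces to the case $\Xb\simeq M[-j]$ for some finite length module $M$; then a composition series $0=M_0\subset\cdots\subset M_n=M$ with successive quotients $\cong k$ builds $M$ from $k$ via a chain of triangles, placing it in $\thick(k)$. Combining the three steps yields $K^b_f(A)\subseteq\mathcal{C}$, as required. The technically delicate point is the closure of $\mathcal{C}$ under triangles: one must produce a genuine chain map between the chosen resolutions lifting the given morphism in the derived category, and verify that the mapping cone is bounded and truly serves as an injective resolution of $\Zb$ rather than merely of a complex quasi-isomorphic to it.
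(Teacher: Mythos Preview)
Your proof is correct and rests on the same core observation as the paper's: the class of complexes admitting such an injective resolution is closed under mapping cones, because if $\Ib,\Ib'$ are bounded complexes with terms $E^{\alpha_n}$ and $\tilde f\colon \Ib\to\Ib'$ is a chain map, then $\cone(\tilde f)$ is again of this form. The organization differs. The paper runs an explicit induction on $\ell(H^*(\Xb))$: it maps $\Xb$ to a projective resolution $\Pb$ of its top cohomology $W=H^0(\Xb)$, observes that both $\Pb$ and $\cone(\psi)$ have strictly smaller cohomology length, applies the induction hypothesis to each, and then recovers $\ib(\Xb)$ as a cone inside $K^b(\Inj A)$. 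You instead package the same cone stability as ``$\mathcal{C}$ is a thick subcategory'', and then feed in the standard d\'evissage $D^b_f(\md A)=\thick(k)$ via truncations and composition series. Your framing is a bit more modular and makes the role of $k$ explicit, whereas the paper's induction handles an arbitrary finite-length module $W$ in one step (using directly that its minimal injective resolution already has the right shape over a regular ring); neither argument needs anything the other does not. One small remark: the closure under direct summands you prove is not actually needed, since your d\'evissage builds every object of $D^b_f$ from $k$ by shifts and cones alone, so a triangulated (not necessarily thick) subcategory containing $k$ would already suffice.
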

\begin{proof}
  We induct on $\ell(H^*(\Xb))$.
  We first consider the case when $\ell(H^*(\Xb)) = 1$. We may assume that $\Xb$ is minimal and $\Xb^j = 0$ for $j > 0$. It follows that $H^0(\Xb) = k$ and $H^i(\Xb) = 0$ for $i< 0$. Thus $\Xb$ is a projective resolution of $k$. A minimal injective resolution of $k$ is the required injective resolution of $\Xb$.

  Let $\ell(H^*(\Xb)) =  n$ and assume the result is proved for all complexes $\Yb$ with $\ell(H^*(\Yb)) < n$. We may assume that $\Xb$ is minimal and $\Xb^j = 0$ for $j > 0$.
  It follows that $W = H^0(\Xb) \neq 0$. If $H^i(\Xb) = 0$ for $i \neq 0 $ then $\Xb$  is a projective resolution of $W$. A minimal injective resolution of $W$ yields the required injective resolution of $\Xb$.
  Next consider the case when $H^i(\Xb)\neq  0$ for some $i \neq 0 $.
  Let $\Pb$ be a minimal projective resolution of $W$. So we have a co-chain map $\psi \colon \Xb   \rt \Pb$ which induces an isomorphism on $H^0(-)$. It follows that $H^*(\cone(\psi)) $ has length $< n$. We have a triangle in $K^b(\proj A) $
  $$ \Xb \rt \Pb \rt \cone(\psi) \rt \Xb[1].$$  We have an equivalence $K^b(\proj A) \rt K^{b,fg}(\Inj A)$ obtained by taking injective resolutions. So we have a triangle
  $$ \ib(\Xb)    \rt   \ib(\Pb)   \xrightarrow{g}      \ib(\cone(\psi))   \rt  \ib(\Xb)[1]. $$
  By our induction hypotheses we can choose $\ib(\Pb), \ib(\cone(\psi))$ of the required form. Now notice that
  $$ \ib(\Xb)[1]  \cong \cone(g).$$
  The result follows.
\end{proof}
Next we show
\begin{proposition}
\label{tensor}
Let $(A,\m)$ be a Noetherian local ring. Let $\widehat{A}$ be the $\m$-adic completion of $A$.  Consider the map $\psi \colon K_f^b(\proj A) \rt K^b_f(\proj \widehat{A} )$ given
by $-\otimes \widehat{A}$.
Then
\begin{enumerate}[ \rm (1)]
  \item $\psi$ is a triangulated functor.
  \item $\psi$ is full and faithful.
  \item If $A$ is regular local then $\psi$ is also dense.
\end{enumerate}
\end{proposition}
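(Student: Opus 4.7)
The plan is to handle (1)--(3) in order, using flatness of $\widehat{A}$ for (1), a support argument for (2), and a thick-generation argument for (3). For (1), $\psi$ is additive and commutes with the shift on the nose; moreover for any cochain map $f$ one has the canonical identification $\cone(f) \otimes_A \widehat{A} = \cone(f \otimes 1_{\widehat{A}})$, coming from the explicit direct-sum formula for the cone. Thus $\psi$ sends mapping-cone triangles to mapping-cone triangles, hence distinguished triangles to distinguished triangles. That $\psi(\Xb) \in K^b_f(\proj \widehat{A})$ follows because $\widehat{A}$ is $A$-flat and finite length $A$-modules are preserved under $\m$-adic completion.

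For (2), set $M = \Hom_{K(A)}(\Ub, \Vb) = H^0(\cHom_A(\Ub, \Vb))$; this is finitely generated since $\cHom_A(\Ub, \Vb)$ is a bounded complex of finitely generated $A$-modules. For any prime $P \neq \m$, by \ref{chom-p} we have $\cHom_A(\Ub, \Vb)_P \cong \cHom_{A_P}(\Ub_P, \Vb_P)$, and because $H^*(\Ub)$ has finite length the complex $\Ub_P$ is a bounded complex of finitely generated projective $A_P$-modules with vanishing cohomology, hence is null-homotopic. So $M_P = 0$ for every $P \neq \m$, whence $M$ has finite length and the canonical map $M \to M \otimes_A \widehat{A}$ is an isomorphism. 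Flat base change for bounded complexes of finitely generated projectives gives $\cHom_A(\Ub, \Vb) \otimes_A \widehat{A} \cong \cHom_{\widehat{A}}(\widehat{\Ub}, \widehat{\Vb})$; taking $H^0$ (which commutes with $-\otimes_A \widehat{A}$ by flatness) yields $M \otimes_A \widehat{A} \cong \Hom_{K(\widehat{A})}(\widehat{\Ub}, \widehat{\Vb})$. A direct check shows the resulting composition agrees with the natural map $f \mapsto f \otimes 1$, giving full faithfulness.

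For (3), assume $A$ regular. Under the equivalence $K^b(\proj A) \simeq D^b(\md A)$ (valid for regular $A$ since every finitely generated module has finite projective dimension), $K^b_f(\proj A)$ corresponds to $D^b_f(\md A)$. An induction on $\sum_i \ell(H^i(\Xb))$, combining canonical truncation triangles with the fact that every finite length $A$-module has a composition series with factors $\cong k$, shows $D^b_f(\md A) = \thick(k)$ inside $D^b(\md A)$. Letting $\Kb \in K^b_f(\proj A)$ be a finite free resolution of $k$, one obtains $K^b_f(\proj A) = \thick(\Kb)$ in $K^b(\proj A)$; since $\widehat{A}$ is also regular, $K^b_f(\proj \widehat{A}) = \thick(\widehat{\Kb})$ likewise. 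By (1) and (2), $\psi$ is a triangulated fully faithful functor sending $\Kb$ to $\widehat{\Kb}$; both source and target are idempotent complete (as thick subcategories of $D^b$, which is idempotent complete by \cite{BS} or \cite{LC}), so the essential image of $\psi$ is a thick triangulated subcategory of $K^b_f(\proj \widehat{A})$ containing the generator $\widehat{\Kb}$, and therefore equals the whole target.

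The main obstacle is (3): establishing $K^b_f(\proj A) = \thick(\Kb)$ uses regularity essentially (to get finite projective resolutions and $K^b(\proj A) \simeq D^b(\md A)$), and the closure of the essential image of $\psi$ under summands requires invoking the Balmer--Schlichting / Le~Chenadec result that $D^b$ is idempotent complete, applied to both $A$ and $\widehat{A}$.
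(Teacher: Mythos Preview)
Your arguments for (1) and (2) are correct and essentially coincide with the paper's: flatness of $\widehat{A}$ for (1), and for (2) the observation that $\Hom_{K^b_f(\proj A)}(\Ub,\Vb)$ has finite length (which the paper had already recorded in Lemma~\ref{tea}), so that tensoring with $\widehat{A}$ does nothing, combined with flat base change for $\cHom$.

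For (3) your route is genuinely different. The paper argues by a direct induction on $\ell(H^*(\Xb))$: given $\Xb \in K^b_f(\proj \widehat{A})$, it peels off $W = H^0(\Xb)$ via a chain map $\Xb \to \Pb$ to a minimal $\widehat{A}$-projective resolution of $W$, observes that $W$ (having finite length) is already an $A$-module so $\Pb \cong \Qb \otimes \widehat{A}$ for the $A$-resolution $\Qb$, applies the induction hypothesis to the shorter complex $\cone(\Xb \to \Pb)$, and then uses fullness from (2) to descend the connecting map. This is explicit and constructive, and avoids any appeal to idempotent completeness. Your approach instead identifies $K^b_f(\proj A) = \thick(\Kb)$ and $K^b_f(\proj \widehat{A}) = \thick(\widehat{\Kb})$ via d\'evissage (using regularity on both sides), and then invokes the general fact that the essential image of a fully faithful triangulated functor out of an idempotent complete category is thick. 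This is cleaner and more conceptual, but it does lean on the Balmer--Schlichting/Le--Chen result (already used in Lemma~\ref{tea}) to know the source is idempotent complete. Both arguments ultimately rest on the same inductive d\'evissage; the difference is whether one packages it as ``$\thick(k)$ is everything'' or unwinds it object by object.
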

\begin{proof}
  (1) This follows from the fact that $\widehat{A}$ is a flat $A$-algebra

  (2) For any $\Xb, \Yb$ the $A$-module $\Hom_{K_f^b(\proj A)} (\Xb, \Yb)$ has finite length. So we have
  \begin{align*}
    \Hom_{K_f^b(\proj A)} (\Xb, \Yb) &= \Hom_{K_f^b(\proj A)} (\Xb, \Yb)\otimes \widehat{A} \\
     &\cong \Hom_{K_f^b(\proj \widehat{A})} (\Xb \otimes \widehat{A}, \Yb \otimes \widehat{A}).
  \end{align*}
  The result follows.

  (3) We first note that if $W$ is any finite length $\widehat{A}$-module then it is also a finite length $A$-module.

  Let $\Xb \in K_f^b(\proj \widehat{A})$. We want to show there is $\Yb \in K_f^b(\proj A)$ with $\Xb   \cong    \Yb  \otimes \widehat{A}$.
   We induct on $\ell(H^*(\Xb))$.
  We first consider the case when $\ell(H^*(\Xb)) = 1$. We may assume that $\Xb$ is minimal and $\Xb^j = 0$ for $j > 0$. It follows that $H^0(\Xb) = k$ and $H^i(\Xb) = 0$ for $i< 0$. Thus $\Xb$ is a projective resolution of $k$. Let $\Yb$ be minimal projective resolution of $k$ as a $A$-module. Then $\Xb   \cong    \Yb  \otimes \widehat{A}$.

  Let $\ell(H^*(\Xb)) =  n$ and assume the result is proved for all complexes $\Ub$ with $\ell(H^*(\Ub)) < n$. We may assume that $\Xb$ is minimal and $\Xb^j = 0$ for $j > 0$.
  It follows that $W = H^0(\Xb) \neq 0$.

   If $H^i(\Xb) = 0$ for $i \neq 0 $ then $\Xb$  is a projective resolution of $W$. Let $\Yb$ be minimal projective resolution of $W$ as a $A$-module. Then $\Xb   \cong    \Yb  \otimes \widehat{A}$.

  Next consider the case when $H^i(\Xb)\neq  0$ for some $i \neq 0 $.
  Let $\Pb$ be a minimal projective resolution of $W$ as an $\widehat{A}$-module. As discussed before  $\Pb   \cong    \Qb  \otimes \widehat{A}$ where $ \Qb$ is minimal projective resolution of $W$ as an $A$-module.

   Note we have a co-chain map $\psi \colon \Xb   \rt \Pb$ which induces an isomorphism on $H^0(-)$. It follows that $H^*(\cone(\psi)) $ has length $< n$. We have a triangle in $K^b(\proj A) $
  $$ \Xb \rt \Pb \xrightarrow{f} \cone(\psi) \rt   \Xb[1].$$
  By our induction hypothesis $\cone(\psi) \cong    \Ub  \otimes \widehat{A}$ for some complex $\Ub \in K_f^b(\proj A)$.
  By (2) we get $f = g\otimes \widehat{A}$ for some $g \in \Hom_{K^b_f(\proj A)}(\Qb, \Ub)$. It follows that
  $$ \Xb[1] \cong \cone(g)\otimes \widehat{A}. $$
\end{proof}
We now give
\begin{proof}
  [Proof of Theorem \ref{serre}]
  Let $\Ub \in K^b_f(\proj A)$. Then by \ref{fin-inj} there is a injective resolution $\Ib$ of $\Xb$ such that $\Ib$ is bounded and for all $n \in \Z$ we have $\Ib^n = E^{\alpha_n}$ for some $\alpha_n < \infty$. Note a projective resolution of $\Ib$ is given by $\Ub$.  Then $\Qb = \Ec(\Ib) \in K^b_f(\proj \widehat{A})$. By \ref{tensor}
  we get that $\Qb \cong  \Vb\otimes \widehat{A}$ for some $\Vb \in K^b_f(\proj A)$. Note $\Ec(\Vb) \cong \Ib$. Set $\Xb = \Dc(\Vb)$. It is easy to check that
  $$ F(\Xb) \cong \Ub.$$
  So $F$ is dense.
\end{proof}

\section{Proof of Theorem \ref{Gor}}
In this section we prove Theorem \ref{Gor}. We need some preliminaries regarding thick subcategories of a triangulated categories and its behaviour under triangulated functors.

\s Let $\Tc$ be a triangulated category with shift functor $\sum$. Let $\Ic_1, \Ic_2$ be two subcategories of $\Tc$. We denote by $\Ic_1 * \Ic_2$ the full subcategory  of $\Tc$ consisting of objects $M$  such that there is a  triangle  $M_1 \rt M \rt M_2 \rt \sum M_1$ with $M_i \in \Ic_i$.
Let $\Ic$ be a full subcategory of $\Tc$. By $\langle \Ic  \rangle$ we denote the smallest subcategory of $\Tc$ containing $\Ic$  which is closed under finite direct sums, direct summands, shifts and isomorphisms. Set  $\Ic_1 \diamond \Ic_2 = \langle  \Ic_1 * \Ic_2 \rangle$.

Set $\langle \Ic \rangle_0 = 0$. Then inductively define $\langle   \Ic   \rangle_i = \langle \Ic \rangle_{i-1} \diamond  \langle \Ic  \rangle.$
It is clear that $\langle \Ic \rangle_{i-1} $ is a subcategory of  $\langle   \Ic   \rangle_i$ for all $i \geq 1$.
Set
$$ \langle \Ic\rangle_{\infty} = \bigcup_{i \geq 0} \langle   \Ic   \rangle_i.$$

\s \label{thick} Let $\Ic$ be a subcategory of $\Tc$. By $\thick(\Ic)$ we mean the intersection of all triangulated subcategories of $\Tc$ containing $\Ic$. It can be shown that
$ \thick(\Ic) =  \langle \Ic\rangle_{\infty}$.

\ \label{thick-functor} Let $\Tc, \Uc$ be a triangulated categories and let $F \colon \Tc \rt \Uc$ be a triangulated functor. Let $\Ic$ be a subcategory of $\Tc$. By $F(\Ic)$ we mean the full subcategory of $\Uc$ consisting of objects $F(M)$ where $M$ is an object of $\Ic$. It is clear from \ref{thick} that $F$ restricts to a triangulated functor
$F_r\colon \thick(\Ic) \rt \thick(F(\Ic))$. If $F$ is an equivalence with quasi-inverse $G$, then note that as $GF(M) \cong M $ for all $M$ we get that $G$ induces a triangulated functor
$G_r \colon \thick(F(\Ic)) \rt \thick(\Ic)$. It follows that $F_r$ is an equivalence with quasi-inverse $G_r$.

\s \label{setup-gor} For the rest of this section we assume $(A,\m)$ is a complete Gorenstein local ring of dimension $d$.  We will use notation introduced in \ref{duality-p} and \ref{duality-e}.
Let $\Sc$ be the category of finite length $A$-modules which also have finite projective dimension. As $A$  is Gorenstein each element $M$ in $\Sc$ also has finite injective dimension. If $M \in \Sc$ then $M^\vee = \Hom_A(M, E) $ has finite injective dimension. As $A$ is Gorenstein it follows that $M^\vee \in \Sc$. Thus we have a duality $(-)^\vee \colon \Sc \rt \Sc^{op}$. Also  note that $M^\vee \cong \Ext^{d}_A(M, A)$.

For each $M \in \Sc$ fix a minimal projective resolution $\Pb^M$. Set \\  $\Ib^{M} = \cHom_A(\Pb^{M^{\vee}}, E)$ which is a minimal injective resolution of $M$. Set
$$ \Fc_f = \thick(\{\Pb^M \mid M \in \Sc\}) \quad \text{in} \ K^b(\proj A) \text{and} $$
$$ \Ic_f = \thick(\{ \Ib^M \mid M \in \Sc \}) \quad \text{in} \ K^{b, fg}(\Inj A).$$
It is easily verified that $\Fc_f \subseteq K_f^b(\proj A)$ and $\Ic_f \subseteq K^b_f(E)$.

\begin{lemma} \label{basic-gor}
(with hypotheses as in \ref{setup-gor})
Consider the three equivalences \\ $\Dc \colon K^b(\proj A) \rt K^b(\proj A)^{op} $, $\Ec \colon K^b(\proj A) \rt K^b(E)^{op}$ and \\
$\pb \colon K^{-,fg}(\Inj A) \rt K^{-, fg}(\proj A)$. Then
\begin{enumerate}[ \rm (1)]
  \item $\Dc$ induces an equivalence $\Dc_r \colon \Fc_f \rt \Fc_f^{op}$.
  \item $\Ec$ induces an equivalence $\Ec_r \colon \Fc_f \rt \Ic_f^{op}$.
  \item $\pb$ induces an equivalence $\pb_r \colon \Ic_f \rt \Fc_f$.
\end{enumerate}
\end{lemma}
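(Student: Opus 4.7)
The plan is to invoke the formalism of \ref{thick-functor}: since each of $\Dc$, $\Ec$, $\pb$ is already known to be an equivalence between the ambient triangulated categories, it suffices to verify that the functor (and its quasi-inverse) carries the chosen generating set of the source thick subcategory into the target thick subcategory. All three verifications reduce to two inputs: local duality over the Gorenstein ring $A$, and Matlis duality as recorded in \ref{duality-e} (together with $\Dc^2 \cong 1$).

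For part (1), the key computation is that $\Ext^i_A(M, A) = 0$ for $i \neq d$ and $\Ext^d_A(M, A) \cong M^\vee$ for every $M \in \Sc$. This is local duality for the Gorenstein ring $A$ applied to a finite length module (for which $H^0_\m(M) = M$ and $H^i_\m(M) = 0$ for $i > 0$). Hence $\Dc(\Pb^M) = \cHom_A(\Pb^M, A)$ is a bounded complex of finitely generated projectives whose only nonzero cohomology is $M^\vee$ sitting in degree $d$, so in $K^b(\proj A)$ we have $\Dc(\Pb^M) \cong \Pb^{M^\vee}[-d]$. Because $(-)^\vee$ is a self-duality on $\Sc$, the image set $\{\Pb^{M^\vee}[-d] : M \in \Sc\}$ coincides, up to shifts, with the original generating set of $\Fc_f$; hence $\Dc$ sends this generating set into $\Fc_f$. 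Using $\Dc^2 \cong 1$, the same holds in reverse. By \ref{thick-functor}, $\Dc$ restricts to an equivalence $\Dc_r \colon \Fc_f \rt \Fc_f^{op}$.

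For part (2), the very definition $\Ib^M = \cHom_A(\Pb^{M^\vee}, E)$ gives immediately $\Ec(\Pb^M) = \cHom_A(\Pb^M, E) = \Ib^{M^\vee}$, a generator of $\Ic_f$. By \ref{duality-e} the quasi-inverse of $\Ec$ is $\Ec$ itself, and $\Ec(\Ib^M) = \cHom_A(\cHom_A(\Pb^{M^\vee}, E), E) \cong \Pb^{M^\vee}$, a generator of $\Fc_f$. Applying \ref{thick-functor} produces the equivalence $\Ec_r \colon \Fc_f \rt \Ic_f^{op}$. For part (3), $\Ib^M$ has only $M$ as cohomology, concentrated in degree $0$, so its projective resolution is $\pb(\Ib^M) \cong \Pb^M$, which lies in $\Fc_f$ because $M$ has finite projective dimension. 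The quasi-inverse $\ib$ sends $\Pb^M$ to $\Ib^M$ by the same argument, and \ref{thick-functor} then yields the equivalence $\pb_r \colon \Ic_f \rt \Fc_f$.

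The only genuinely nontrivial input in this plan is the local duality computation in part (1); everything else is a formal consequence of Matlis duality, the self-duality of $\Sc$ under $(-)^\vee$, and the abstract closure properties of thick subcategories developed in \ref{thick} and \ref{thick-functor}.
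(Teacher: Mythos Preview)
Your proposal is correct and follows essentially the same approach as the paper: compute where each equivalence sends the generators $\Pb^M$ (respectively $\Ib^M$) and then invoke \ref{thick-functor}. The paper's proof is terser---it simply asserts $\Dc(\Pb^M) \cong \Pb^{M^\vee}[-d]$, $\Ec(\Pb^M) = \Ib^{M^\vee}$, and $\pb(\Ib^M) \cong \Pb^M$ without spelling out the local duality input---but your additional justification of the vanishing $\Ext^i_A(M,A)=0$ for $i\neq d$ and the explicit treatment of the quasi-inverses only make the argument more complete.
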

\begin{proof}
Let $M \in \Sc$. \\
(1) Note that $\Dc(\Pb^M) \cong \Pb^{M^{\vee}}[-d]$. The result follows from \ref{thick-functor}.

(2) Note that $\Ec(\Pb^M) = \Ib^{M^\vee}$. The result follows from \ref{thick-functor}.

(3) It is tautological that $\pb(\Ib^M) \cong \Pb^M$. The result follows from \ref{thick-functor}.
\end{proof}

\s \label{serre-gor} Consider $G \colon \Fc_f \rt \Fc_f$ which is the composite of triangle equivalences
$$ \Fc_f \xrightarrow{\Dc_r} \Fc_f^{op} \xrightarrow{\Ec_r^{op}} \Ic_f \xrightarrow{\pb_r} \Fc_f. $$

\begin{remark}
By Neeman's classification of thick subcategories of $K^b(\proj A)$, \\  see \cite[Lemma 1.2]{NK},  it follows that $K^b_f(\proj A)$ does not have proper thick subcategories. So $\Fc_f = K^b_f(\proj A)$. We had to go through the whole strategy of the above proof because \ref{serre-gor} implies the following result of which I do not know a direct proof:
\begin{proposition}
Let $\Xb \in K^b_f(\proj A)$ and $\Yb \in K^b_f(E)$. Then there is an injective resolution $\ib(\Xb) \in K^b_f(E)$ and a projective resolution $\pb(\Yb) \in K^b_f(\proj A)$.
\end{proposition}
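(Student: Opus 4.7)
My plan is to establish two identifications, $\Fc_f = K^b_f(\proj A)$ and $\Ic_f = K^b_f(E)$, and then transport the conclusion across the equivalence $\pb_r \colon \Ic_f \to \Fc_f$ of Lemma~\ref{basic-gor}(3). The first identification is already delivered by the remark preceding the proposition: Neeman's classification \cite[Lemma~1.2]{NK} implies that $K^b_f(\proj A)$ has no proper nonzero thick subcategories, so it coincides with $\Fc_f$ (which contains the nonzero object $\Pb^k$).

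For the second identification I would use the Matlis duality functor $\Ec \colon K^b(\proj A) \to K^b(E)^{op}$ from \ref{duality-e}. The first step is to check that $\Ec$ restricts to an equivalence $K^b_f(\proj A) \cong K^b_f(E)^{op}$: for $\Xb \in K^b_f(\proj A)$ each module $\Ec(\Xb)^i = \Hom_A(\Xb^{-i}, E)$ is a finite direct sum of copies of $E$, and injectivity of $E$ gives $H^i(\Ec(\Xb)) \cong \Hom_A(H^{-i}(\Xb), E)$, which has finite length; conversely for $\Yb \in K^b_f(E)$, each $\Hom_A(\Yb^{-i}, E)$ is finitely generated free over $A$ via the Matlis identification $\Hom_A(E,E) = A$ (here the completeness of $A$ is essential). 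Once this restricted equivalence is in place, it carries the thick subcategory $\Fc_f$ to $\thick(\{\Ec(\Pb^M) : M \in \Sc\}) = \thick(\{\Ib^{M^\vee} : M \in \Sc\}) = \Ic_f$, using $\Ec(\Pb^M) = \Ib^{M^\vee}$ and the fact that $(-)^\vee$ permutes $\Sc$. Combined with $\Fc_f = K^b_f(\proj A)$, this forces $\Ic_f = K^b_f(E)$.

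With both identifications, the proposition is immediate. For $\Xb \in K^b_f(\proj A) = \Fc_f$, essential surjectivity of $\pb_r$ yields some $\Yb \in \Ic_f = K^b_f(E)$ with $\pb(\Yb) \cong \Xb$; since $\pb(\Yb)$ is by definition a projective resolution of $\Yb$ viewed in $D^{-,fg}(A)$, $\Yb$ itself serves as the required injective resolution of $\Xb$ and lies in $K^b_f(E)$. Dually, for $\Yb \in K^b_f(E) = \Ic_f$, the functor $\pb_r$ directly delivers $\pb(\Yb) \in \Fc_f = K^b_f(\proj A)$. I expect the main obstacle to be the second identification $\Ic_f = K^b_f(E)$: the Matlis-duality check needs care on both sides and relies essentially on completeness, but this is precisely what lets the Neeman-type rigidity valid on $\Fc_f$ propagate across the duality to $\Ic_f$.
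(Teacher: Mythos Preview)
Your proposal is correct and follows the route the paper has in mind: the paper does not write out a proof but simply records that the Proposition is a consequence of \ref{serre-gor} together with Neeman's identification $\Fc_f = K^b_f(\proj A)$, and your argument makes this deduction explicit. In particular, your use of the restricted Matlis equivalence $\Ec \colon K^b_f(\proj A) \to K^b_f(E)^{op}$ to transport $\Fc_f = K^b_f(\proj A)$ to $\Ic_f = K^b_f(E)$, followed by the equivalence $\pb_r$ of Lemma~\ref{basic-gor}(3), is exactly the mechanism the paper is invoking.
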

\end{remark}
We now give
\begin{proof}
  [Proof of Theorem\ref{Gor}]
  Just as in proof of Theorem \ref{happel-gen} we have for \\ $\Xb, \Yb \in K^b_f(\proj A)$ an isomorphism
  $$ \eta_{\Xb, \Yb} \colon \Hom_{\Fc_f}(\Xb, \Yb) \rt \Hom_{\Fc}(\Yb, G(\Xb))^\vee.$$
  It is easily verified that $\eta_{\Xb, \Yb}$ is natural in $\Xb\, \Yb$. So $G$ is a right Serre functor. By \ref{serre-gor} $G$ is an equivalence. In particular $G$ is dense.
  So $G$ is a Serre-functor. So by \ref{RV-main},  $K^b_f(\proj A)$ has AR-triangles.
\end{proof}

\section{A filtration of certain DG-algebras}
We will need to recall a few basic results.
Let $A$ be a Noetherian ring. We will recall Tate's construction of DG-resolution of $A/I$ where $I$ is an ideal in $A$, see \cite{T}.

\s An associative  algebra $\Xb$  over $A$ is called an \textit{non-positive DG-algebra }over $A$
if the following hypotheses are satisfied:
\begin{enumerate}
\item
$\Xb$ is non-positively graded $\Xb =  \bigoplus_{n \leq 0} \Xb^n$ with each $\Xb^i$ a finitely generated $A$-module and $\Xb^i \Xb^j \subseteq \Xb^{i+j}$ for all $i, j \leq 0$.
\item
$\Xb$ has a unit element  $1 \in \Xb^0$ such that $\Xb^0 = A1$.
\item
$\Xb$ is strictly skew-commutative;
(for homogeneous element $x \in \Xb^i$ set $|x| = i$)
For homogeneous elements $x,y$ we have
\begin{enumerate}[\rm (i)]
\item
$ x.y = (-1)^{|x||y|}yx. $
\item
$x^2 = 0$ if $|x|$ is odd.
\end{enumerate}
\item
There exists a skew derivation $d \colon \Xb \rt \Xb$ such that
\begin{enumerate}[\rm (i)]
\item
$d(\Xb^n) \subseteq \Xb ^{n+1} $ for all $n \leq 0$.
\item
$d^2 = 0$.
\item
For $x,y$ homogeneous,
$$d(xy) = d(x)y + (-1)^{|x|}x d(y).$$
\end{enumerate}
\end{enumerate}
The map $d$ is called skew derivation on $\Xb$.

\s \label{Tate}
Next we recall Tate's process of killing cycles; \cite[section 2]{T}.
Let $\Xb$ be a non-positive DG-algebra. Let $\rho < 0$ be a negative integer. Let $t \in Z^{\rho+1}(\Xb)$ be a cycle
of degree $\rho + 1$.

Case 1: $\rho$ is odd. Let $\Yb = A \oplus AT$ be the exterior power algebra with degree of $A$ is zero and $\deg T = \rho$ and $T^2 = 0$

Case 2: $\rho$ is even. Let $\Yb = \bigoplus_{j \leq 0} AT_j$ with $AT_j \cong  A$ in degree $- j\rho$  (with $T_0 = 1$) and multiplication defined by
$$T_iT_j = \frac{(|i|+|j|)!}{|i|!|j|!}T_{i+j}.   $$
In both cases $\Xb \otimes \Yb$ is again a non-positive DG-algebra. Furthermore  there is a unique skew-derivation
$d$ on $\Yb$ extending that on $\Xb = \Xb\otimes 1$ with $d(T) = t$ in case (1) and
$d(T_j) = tT_{j+1}$ for all $j$ in case (2). Furthermore we get $H^q(\Xb\otimes \Yb) = H^q(A)$ for $q > \rho + 1$ and
$H^{\rho+1}(\Xb \otimes \Yb) = H^{\rho+1}(\Xb)/At$.

\begin{remark}
Following Tate we denote in both cases  $\Xb \otimes \Yb = \Xb<T>$ with $d(T) = t$.
\end{remark}

\s \label{setup-dg}Let $\Xb$ be a non-positive DG-algebra over $A$. We assume $\Xb^i$ is a finitely generated free $A$-module for all $i \leq 0$ and $\ell(H^*(\Xb)^n) < \infty$ for all $n \in \Z$. By a good filtration $\Fc = \{\Fb(i)\}_{i \geq 0}$ of $\Xb$ we mean
\begin{enumerate}
\item
$\Fb(i)$ is a sub-complex of $\Xb$ with  $\Fb(i)^n$  a direct summand of $\Xb^n$ for all $n \leq 0$.
\item
$\Fb(i) \subseteq \Fb(i+1)$  for all $i \geq 0$ and $\bigcup_{i \geq 0} \Fb(i) = \Xb$.
\item
$\Fb(i)^n$ is a direct summand of $\Fb(i+1)^n$ for all $n \leq 0$.

\item
There exists $c$ depending on $\Xb $ and $\Fc$ such that
$$ \Fb(i)^n  \Fb(j)^m    \subseteq  \Fb(i+j +c)^{n+m}.$$
\item
$\ell(H^*(\Fb(i))) < \infty$ for all
$i\geq 1$.
\end{enumerate}
We call $c$ the \emph{parameter} of $\Fc$.
The main result of this section
is
\begin{theorem}\label{filt-ext}
(with hypotheses as in \ref{setup-dg}.)    Let $\rho < 0 $ be such that $H^i(\Xb) = 0$ for $0 > i \geq \rho + 2$.  Suppose $H^{\rho+1}(\Xb) \neq 0$ and let  $t \in Z^{\rho +1}(\Xb)$ be a cycle
of degree $\rho + 1$ such that its residue class in  $H^{\rho +1}(\Xb)$ is non-zero. Let $\Zb = \Xb<T>$ with $d(T) = t$.
Then $\Zb$ also has a good filtration as discussed in \ref{setup-dg}.
\end{theorem}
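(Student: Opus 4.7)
My plan is to construct a good filtration $\Gc = \{\Gb(i)\}$ on $\Zb = \Xb\langle T\rangle$ by weaving the given filtration $\Fc$ of $\Xb$ together with a bound on ``$T$-height''. Choose $i_0$ with $t \in \Fb(i_0)^{\rho+1}$, let $c_0$ be the parameter of $\Fc$, and fix an integer $C \geq i_0 + c_0$. In Case $2$, using divided-power generators $T^{(k)} := T_{-k}$ (so $\deg T^{(k)} = k\rho \leq 0$ and $T^{(0)} = 1$), define
\[
\Gb(i) = \bigoplus_{k=0}^{\lfloor i/C \rfloor} \Fb(i - kC) \cdot T^{(k)}.
\]
In Case $1$, just set $\Gb(i) = \Fb(i) \oplus \Fb(i - C) \cdot T$, adopting the convention $\Fb(j) = 0$ for $j < 0$. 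In both cases $\Gb(i)^n$ is a finite direct sum of pieces $\Fb(i - kC)^{n - k\rho} T^{(k)}$, and the ``underlying graded module'' parts of \ref{setup-dg} pass from $\Fc$ to $\Gc$ summand-wise.

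The one non-formal point among the basic axioms is that $\Gb(i)$ is stable under $d$. For $x \in \Fb(i-kC)$ one has $d(x T^{(k)}) = d(x) T^{(k)} \pm (x \cdot t) T^{(k-1)}$; the first term stays in the $k$-th summand since $\Fb(i-kC)$ is a subcomplex of $\Xb$, and the parameter axiom for $\Fc$ places $x \cdot t$ in $\Fb(i - kC + i_0 + c_0) \subseteq \Fb(i - (k-1)C)$ by the choice of $C$, so the cross term lies in the $(k-1)$-th summand. The inclusion $\Gb(i) \subseteq \Gb(i+1)$ and the direct-summand relation $\Gb(i)^n \subseteq \Gb(i+1)^n$ follow from the analogous data for $\Fc$; exhaustion $\bigcup_i \Gb(i) = \Zb$ uses that $\lfloor i/C \rfloor \to \infty$. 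For the parameter of $\Gc$, computing $(x T^{(k)})(y T^{(l)})$ as a scalar multiple of $xy \cdot T^{(k+l)}$ and combining the parameter estimate for $\Fc$ with the inequality $\lfloor i/C \rfloor + \lfloor j/C \rfloor \leq \lfloor (i+j+C)/C \rfloor$ shows that $c' := c_0 + C$ is a parameter for $\Gc$.

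The substantive step is finite-length cohomology of $\Gb(i)$. I would filter it by $T$-height, $F_p \Gb(i) = \bigoplus_{k \leq p} \Fb(i - kC) T^{(k)}$; this is a finite ascending filtration by subcomplexes because the cross term in the differential lowers $T$-degree by one, so $d$ preserves $F_p$. The successive quotients are $F_p/F_{p-1} \cong \Fb(i - pC) \cdot T^{(p)}$ with differential just the inherited $d$ on $\Fb(i - pC)$, whose cohomology has finite length by hypothesis (5) of \ref{setup-dg}. Iterating the long exact sequence in cohomology attached to $0 \rt F_{p-1} \rt F_p \rt F_p/F_{p-1} \rt 0$ through the finitely many steps $p = 0, 1, \dots, \lfloor i/C \rfloor$ gives $\ell(H^*(\Gb(i))) < \infty$.

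The main obstacle is really choosing $C$ so that two competing requirements coexist: $C$ has to be large enough that the product $x \cdot t$ is trapped in the next-lower $T$-summand (forcing $C \geq i_0 + c_0$), yet $\lfloor i/C \rfloor$ must grow without bound in $i$ so that $\Gc$ exhausts $\Zb$ and accommodates arbitrary products, which any finite $C$ provides. Once this balance is fixed, Case $1$ is a degenerate version of Case $2$ in which the $T$-tower terminates at the first step because $T^2 = 0$, and the remaining verifications are routine summand-wise checks.
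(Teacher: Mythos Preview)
Your proposal is correct and follows essentially the same strategy as the paper: weight the given filtration $\Fc$ against the $T$-height, verify the subcomplex and parameter axioms using $t \in \Fb(i_0)$ together with the parameter $c_0$ of $\Fc$, and deduce finite-length cohomology by filtering each $\Gb(i)$ by $T$-degree and iterating long exact sequences. The only difference is cosmetic: the paper indexes the $T_l$-coefficient as $\Fb((i+l)(r+c))$ in Case~2 (and $\Fb(i+r+c)\oplus\Fb(i)T$ in Case~1), whereas you use the subtractive scheme $\Fb(i-kC)$ with a single constant $C \geq i_0 + c_0$, which has the minor advantage of treating both parities uniformly.
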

\begin{proof}
 Let $\Fc$ be a good filtration of $\Xb$ with parameter $c$. Suppose $t \in \Fb(r)^{\rho + 1}$.
We have to consider two cases.

Case (1): $\rho$ is odd.
Note ${\Zb}^n = {\Xb}^n \oplus \Xb^{n - \rho} T$.
 Set $\Gc = \{\Gb(i)\}_{i \geq 0}$
where
\[
\Gb(i)^n = \Fb(i+r + c)^n \oplus \Fb(i)^{n-\rho}T
\]
We first show that $\Gb(i)$ is a sub-complex of $\Zb$. Let $g^n = (f(i+r+c)^n , f(i)^{n-\rho}T) \in \Gb(i)^n$.
Then note that
\[
d(g^n) = d(f(i+r+c)^n) + (-1)^{n-\rho}f(i)^{n-\rho}t  + d(f(i)^{n-\rho}T.
\]
We note that
\[
f(i)^{n-\rho}t  \in \Fb(i)^{n-\rho} \Fb(r)^{\rho + 1} \subseteq \Fb(i+r + c)^{n+1}.
\]
Thus $d(\Gb(i)^n ) \subseteq \Gb(i)^{n + 1}$. It follows that $\Gb(i)$ is a sub-complex of $\Zb$.
Clearly hypotheses $(1), (2), (3)$ is satisfied.
We note that
\begin{align*}
\Gb(i)^n \Gb(j)^m &= (\Fb(i+r + c)^n \oplus \Fb(i)^{n-\rho}T) \cdot (\Fb(j+r + c)^m \oplus \Fb(j)^{m-\rho}T) \\
                  &\subseteq  \Fb(i+j + 2r + 3c)^{n+m} \oplus \Fb(i + j + r + 2c)^{n + m-\rho}T \\
                  &= \Gb(i + j  + r + 2c)_{n+m}.
\end{align*}
So $\Gc$ has parameter $r + 2c$.
We have a short exact sequence of complexes
\[
0 \rt \Fb(i+r+c) \xrightarrow{\alpha} G(i) \xrightarrow{\beta} \Fb(i)[-\rho] \rt 0,
\]
where $\alpha$ is the usual inclusion and
\[
\beta_n(*, x_{n-\rho}T)  = (-1)^n x_{n-\rho}. \tag{*}
\]
Note as $\rho$ is odd we get that $\beta$ is a chain map.The face that $(*)$ is a short exact sequence of complexes is clear. As $\ell(H^*(\Fb(i+r+c))) $ and $\ell(H^*(\Fb(i))) $ are finite it follows that
$\ell(H^*(\Gb(i)))$ is also finite. Thus hypotheses $(4)$ of \ref{setup-dg} is satisfied.
So $\Gc$ is a good filtration of $\Zb$.

Case (2): $\rho$ is even.
We note that
\[
\Zb^n = \Xb^n \oplus  \Xb^{n - \rho}T_{-1} \oplus  \Xb^{n - 2\rho}T_{-2} \oplus \cdots
\]
Set $\Gc = \{ \Gb(i) \}_{i \geq 0}$ where
\[
\Gb(i)^n = \bigoplus_{l = 0}^{-i} \Fb( (i+l)(r + c) )^{n + \rho l} T_l
\]
We first show that $\Gb(i)$ is a sub-complex of $\Zb$.
Let $l$ with $-i \leq l \leq 0$ be fixed. Let $u = aT_l \in \Gb(i)^n $. Then
\[
d(u) = d(a)T_l + (-1)^{|a|} a t T_{l+1}.
\]
We note that $ d(a)T_l \in \Gb(i)^{n+1}$. Furthermore
\begin{align*}
atT_{l+1} &\in \Fb((j+l)(r+c))^{n+\rho l} \Fb(r)^{\rho + 1}T_{l+1},\\
&\subseteq \Fb((j+l)(r+c) + r + c)^{n+\rho l + \rho  + 1}T_{l+1}\\
&=  \Fb((j+l + 1)(r+c) )^{n + 1 +\rho (l + 1)}T_{l+1}, \\
 &\subseteq  \Gb(i)^{n+1}.
\end{align*}
Thus $\Gb(i)$ is a sub-complex of $\Zb$.
Clearly hypotheses $(1), (2), (3)$ is satisfied.
Let $u = aT_{l_1} \in \Fb((j+l_1)(r+c))^{n +\rho l_1}T_{l_1}$ and $v = bT_{l_1} \in \Fb((i+l_2)(r+c))^{m +\rho l_2}T_{l_2}$.
Then
\begin{align*}
uv &\in  \Fb((j+l_1)(r+c))^{n + \rho l_1}  \Fb((i+l_2)(r+c))^{m + \rho l_2}T_{l_1 + l_2}, \\
&\subseteq \Fb((j+l_1 + i + l_2)(r+c) + c )^{n + \rho l_1 + m +\rho l_2}T_{l_1 + l_2},\\
&\subseteq \Gb(i + j +1)^{n+m}.
\end{align*}
So $\Gc$ has parameter $1$.

Next we show that $\ell(H^*(\Gb(i))) < \infty $ for all $i \geq 0$.
Fix $i \geq 0$. Set for $s = 0, \ldots, i$,
\[
\Kb(i,s)^n =  \bigoplus_{l = 0}^{-s}\Fb( (i+l)(r + c) )^{n + \rho l} T_l
\]
It is straight forward  that $\Kb(i,s)$ is a sub-complex of $\Gb(i)$ and  $\Kb(i,i) = \Gb(i)$.
We prove by induction on $s$ that $H^*(\Kb(i,s))$ has finite length for all $s$.
For $s = 0$ we have
$\Kb(i, 0) = \Fb( (i)(r + c) )$ and so by our hypotheses $\ell(H^*(\Kb(i, 0)))$ has finite length.
Let $s \geq 1$. Note we have an obvious short exact sequence of complexes
\[
0 \rt \Kb(i, s-1) \rt \Kb(i, s) \rt \Fb((i-s)(r+c))[-\rho s] \rt 0
\]
Note by our hypotheses  $\ell(H^*(\Fb((i-s)(r+c)))$ has finite length. Furthermore by induction hypotheses  $H^*(\Kb(i,s-1))$ has finite length. Thus by induction  $H^*(\Kb(i,s))$ has finite length for $s = 0, \ldots, i$.
In particular for $i = s$ we get that $H^*(\Gb(i))$ has finite length. Thus $\Gc$ is a good filtration of $\Zb$.
\end{proof}

\begin{remark}\label{step-dg}
Let $I$ be an ideal in $A$ such that $A/I$ has finite length. Let $\Xb$ be the Koszul complex on a set of generators of $I$. Then $H^*(\Xb)$ has finite length. Then if $\Yb$ is any step in the Tate resolution of $A/I$ begining at $\Xb$ then $\Yb$ has a good filtration.
\end{remark}
\section{Proof of Theorem \ref{happel-gen}( (ii) $\implies$ (i))}
In this section we prove:
\begin{theorem}
\label{converse-happel}
Let $(A,\m)$ be a Noetherian local ring. If $D^b_f(A)$ has AR-triangles then $A$ is regular local.
\end{theorem}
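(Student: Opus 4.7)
The plan is to invoke Reiten--Van den Bergh and then test the resulting Serre functor on the residue field $k=A/\m$. By Lemma~\ref{tea}, $\Dc:=D^b_f(\md(A))$ is a Hom-finite, $A$-linear, Krull--Schmidt triangulated category, so Theorem~\ref{RV-main} converts the hypothesis into a Serre functor $F\colon \Dc\to \Dc$ together with a natural isomorphism
\[
\Hom_{\Dc}(C,D)\;\xrightarrow{\sim}\;\Hom_{\Dc}(D,F(C))^\vee,
\]
where $(-)^\vee=\Hom_A(-,E)$. Since every Hom-set in $\Dc$ has finite length, Matlis duality recasts this as $\Hom_{\Dc}(C,D)^\vee \cong \Hom_{\Dc}(D,F(C))$.

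I would then specialize to $C=k$ and $D=k[i]$, viewed as stalk complexes, to obtain
\[
\Ext^i_A(k,k)^\vee \;\cong\; \Hom_{\Dc}(k[i],F(k)) \;\cong\; \Ext^{-i}_A(k,F(k)) \qquad (i\in\Z).
\]
Suppose, for contradiction, that $A$ is not regular. Then Auslander--Buchsbaum--Serre gives $\projdim_A k=\infty$, so the minimal free resolution of $k$ has every free module nonzero; hence $\Ext^i_A(k,k)\neq 0$ for every $i\geq 0$, and consequently $\Ext^{-i}_A(k,F(k))\neq 0$ for every $i\geq 0$.

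The contradiction now comes from the boundedness of $F(k)$. Represent $F(k)\in\Dc$ by a bounded complex $\Yb$ with $\Yb^j=0$ for $j<a$, and take a projective resolution $\Pb$ of $k$ with $\Pb^i=0$ for $i>0$. By the formula in~\ref{chom},
\[
\cHom_A(\Pb,\Yb)^n=\prod_i\Hom_A(\Pb^i,\Yb^{i+n}),
\]
which vanishes whenever $n<a$ because no index $i\leq 0$ can simultaneously satisfy $i+n\geq a$. Hence $\Ext^n_A(k,F(k))=H^n(\cHom_A(\Pb,\Yb))=0$ for $n<a$, contradicting the previous paragraph once $i>-a$. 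Therefore $A$ must be regular.

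The only delicate point in carrying out the plan is identifying $\Hom_{\Dc}$ with the ambient derived Hom in $D^b(\md(A))$, so that Matlis duality and the Ext-computation are both legitimate. This is furnished by the fact that $\Dc$ is a full, thick, Hom-finite subcategory of $D^b(\md(A))$ (Lemma~\ref{tea}), and I do not foresee any further obstacle.
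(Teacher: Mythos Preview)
Your argument is correct and takes a genuinely different, more economical route than the paper's. The paper does not pass through a Serre functor for this direction at all; instead it works directly with the definition of an AR-triangle. It takes the minimal Tate DG-algebra resolution $\Xb$ of $k$ (indecomposable in $K^{-,b}_f(\proj A)\cong D^b_f(A)$) and shows that when $A$ is not regular, no right AR-triangle can end at $\Xb$. The mechanism is a ``good filtration'' of $\Xb$ by bounded subcomplexes $\Fb(i)$ with finite-length cohomology (constructed in Section~7 by filtering the adjoined divided-power and exterior variables), together with an extension-of-homotopies lemma (Proposition~\ref{extend-homotopy}): for any candidate connecting map $g\colon \Xb\to \Ub[1]$, the inclusion $\Fb(i)\hookrightarrow \Xb$ is not a retraction, so $g|_{\Fb(i)}$ is null-homotopic; these null-homotopies are then propagated upward through the filtration (and, in the non-complete-intersection case, through the successive Tate stages $\Yb_i$) to show $g$ itself is null-homotopic, a contradiction.

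Your approach is shorter because it cashes in the AR-triangle hypothesis for a Serre functor via Theorem~\ref{RV-main} and then exploits the single fact that $F(k)$ is cohomologically bounded below, which immediately clashes with the unboundedness of $\Ext^*_A(k,k)$. What the paper's argument buys is independence from the Reiten--Van den Bergh equivalence and an explicit, constructive obstruction at the level of chain maps; what yours buys is a one-line reduction that would work verbatim in any Hom-finite $A$-linear triangulated category containing an object with nonvanishing self-Ext in infinitely many degrees.
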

Throughout we identify $D^b_f(A)$ with $K^{-,b}_f(\proj A)$ the homotopy category of bounded above complexes of finitely generated free $A$-modules with finite length cohomology.
Let $\Xb$ be a minimal projective resolution of $k$. Then clearly $\Xb$ is indecomposable in $K^{-,b}_f(\proj A)$.
We will use a result independently proved by  Gulliksen \cite{G} and Schoeller \cite{S}, i.e., one  can use the Tate process to yield a minimal resolution of $k$.
Theorem \ref{converse-happel} follows from the following result:
\begin{lemma}
\label{res-field}
Let $\Xb$ be a minimal resolution of $k$. If $A$ is not regular then there does not exist a right  AR-triangle in
$K^{-,b}_f(\proj A)$ ending at $\Xb$.
\end{lemma}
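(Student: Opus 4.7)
The plan is to derive a contradiction from the existence of a right AR-triangle
\[ s \colon \Yb \xrightarrow{f} E \xrightarrow{g} \Xb \xrightarrow{h} \Yb[1] \]
with $h \neq 0$, by using the Tate DG-algebra structure of $\Xb$ to produce a tower of bounded sub-complexes $\Fb(i) \subseteq \Xb$ on which $h$ restricts to zero, and then invoking a Milnor $\varprojlim^1$ argument to conclude $h = 0$.

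By the Gulliksen--Schoeller theorem, $\Xb$ can be realized as Tate's iterated DG construction: starting from the Koszul complex on a minimal generating set of $\m$, one successively adjoins exterior or divided-power variables to kill cycles, yielding bounded sub-complexes $\Xb_0 \subseteq \Xb_1 \subseteq \cdots \subseteq \Xb$ whose union is $\Xb$. Each inclusion $\Xb_m \hookrightarrow \Xb_{m+1}$ is split in every degree, and by Theorem \ref{filt-ext} (cf.\ Remark \ref{step-dg}) each $\Xb_m$ carries a good filtration and in particular has finite-length cohomology. Setting $\Fb(i) := \Xb_i$ produces a tower of bounded sub-complexes of $\Xb$, each in $K^b_f(\proj A)$, with degreewise split inclusions and union $\Xb$.

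Since $A$ is not regular, the Auslander--Buchsbaum--Serre theorem gives $\projdim_A k = \infty$, so $\Xb$ is unbounded below. Hence no bounded $\Fb(i)$ can contain $\Xb$ as a direct summand, so the inclusion $\iota_i \colon \Fb(i) \to \Xb$ is not a retraction for any $i$. By the formulation (RAR3g) of the right AR-condition, $\iota_i$ factors through $g$: there exists $\widetilde{\iota}_i \colon \Fb(i) \to E$ with $g \circ \widetilde{\iota}_i = \iota_i$, and hence $h \circ \iota_i = h \circ g \circ \widetilde{\iota}_i = 0$ for every $i$, since $h \circ g = 0$ in the triangle $s$.

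It remains to deduce $h = 0$. I would view all objects inside the unbounded derived category $D(\md A)$. Because the structure maps $\Fb(i) \hookrightarrow \Fb(i+1)$ are split monomorphisms in each degree, $\Xb$ is the homotopy colimit of the tower $\{\Fb(i)\}$ in $D(\md A)$, and the associated Milnor distinguished triangle yields the exact sequence
\begin{equation*}
0 \to \varprojlim{}^1 \Hom_{D(\md A)}(\Fb(i), \Yb) \to \Hom_{D(\md A)}(\Xb, \Yb[1]) \xrightarrow{\rho} \varprojlim \Hom_{D(\md A)}(\Fb(i), \Yb[1]) \to 0.
\end{equation*}
Through the fully faithful embedding $D^b_f(A) \hookrightarrow D(\md A)$, these $\Hom$-groups agree with their counterparts in $D^b_f(A)$, which by Lemma \ref{tea} have finite length over $A$. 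The descending chain of images of $\Hom_{D^b_f(A)}(\Fb(j), \Yb) \to \Hom_{D^b_f(A)}(\Fb(i), \Yb)$ (for $j \geq i$) therefore stabilizes, so the Mittag--Leffler condition holds and $\varprojlim^1 = 0$. Hence $\rho$ is injective; since $\rho(h) = (h \circ \iota_i)_i = 0$, we conclude $h = 0$, contradicting the AR-hypothesis $h \neq 0$. The principal obstacle is formal justification of the Milnor triangle here---that $\Xb \cong \operatorname{hocolim} \Fb(i)$ in $D(\md A)$ and that the $\Hom$-groups match those in $D^b_f(A)$---which uses crucially the degreewise split structure of the tower coming from the Tate construction.
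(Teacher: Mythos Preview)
Your argument has a genuine gap: the Tate stages $\Xb_m$ are not bounded complexes. As soon as one adjoins a divided-power variable---which happens already at $\Xb_2$, where cycles in degree $-1$ are killed by variables of even degree $-2$---the resulting complex is unbounded below. So your claim that the $\Xb_m$ are bounded and lie in $K^b_f(\proj A)$ is false. Moreover, carrying a good filtration (Theorem~\ref{filt-ext}, Remark~\ref{step-dg}) does \emph{not} mean that $\Xb_m$ itself has finite total-length cohomology; that conclusion holds only for each filtration piece $\Fb(i)$, which is the whole point of the construction. The complete-intersection case already breaks your scheme concretely: there the Tate tower terminates with $\Xb_2 = \Xb$, so the only bounded stages are $A$ and the Koszul complex $\Xb_1$, whose union is $\Xb_1 \neq \Xb$; the homotopy-colimit identification with $\Xb$ fails and the Milnor sequence gives no information about $h$. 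In the non-CI case the tower is infinite, but the stages are unbounded and need not lie in $K^{-,b}_f(\proj A)$, so (RAR3g) cannot be applied to the inclusions $\Xb_m \hookrightarrow \Xb$.

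This is exactly why the paper develops good filtrations. The proof does not filter by Tate stages but by the pieces $\Fb(i)$ of the good filtration of a fixed stage: each $\Fb(i)$ is genuinely bounded with finite-length cohomology, hence an object of $K^b_f(\proj A)$, and is a proper degreewise summand of $\Xb$, so Proposition~\ref{retract} shows the inclusion is not a retraction. The AR condition is then invoked \emph{once}, on a single $\Fb(i_0)$ chosen so that $\Fb(i_0)^n = \Xb^n$ in all degrees where the target $\Ub[1]$ has nonzero cohomology. From there the null-homotopy of $g$ is extended explicitly, step by step, via Proposition~\ref{extend-homotopy}---first along the good filtration, and in the non-CI case further along the Tate tower $\Yb_m \subset \Yb_{m+1} \subset \cdots$---with no need for the intermediate complexes to be objects of the category and no $\varprojlim^1$ machinery. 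Your Milnor approach could plausibly be salvaged if the tower were replaced by the good-filtration pieces (which do exhaust $\Xb$ and do lie in $K^b_f(\proj A)$), but as written the tower you chose is not a tower in $K^{-,b}_f(\proj A)$.
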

We need a few preliminaries to prove Theorem \ref{res-field}. First we show
\begin{proposition}\label{retract}
Let $(A,\m)$ be a Noetherian local ring. Let $\Ub  \in K^{-,b}_f(\proj A)$ be a minimal complex, i.e., $\partial(\Ub) \subseteq \m \Ub$. Let $\Vb \subseteq \Ub$ be a sub-complex such that $\Vb^n$ is a direct summand of $\Ub^n$ for all $n \in \Z$. If $\Vb \neq \Ub$ then the inclusion $i \colon \Vb \rt \Ub$ is not a retraction
in $ K^{-,b}_f(\proj A)$.
\end{proposition}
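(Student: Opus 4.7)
The plan is to assume that $i$ is a retraction in $K^{-,b}_f(\proj A)$ and squeeze the equation describing this retraction modulo $\m$ to force $\Vb = \Ub$ already at the level of complexes. Concretely, suppose there is a chain map $s \colon \Ub \rt \Vb$ together with a degree $-1$ graded $A$-linear map $h \colon \Ub \rt \Ub[-1]$ satisfying
\[
i \circ s - 1_{\Ub} \;=\; \partial_{\Ub}\circ h + h \circ \partial_{\Ub}.
\]

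The central move is to reduce this identity modulo $\m$. Since $\Ub$ is minimal, $\partial_{\Ub}(\Ub) \subseteq \m\Ub$, so the induced differential on the complex $\Ub \otimes_A k$ is identically zero. Consequently both $\partial_{\Ub}\circ h$ and $h\circ\partial_{\Ub}$ vanish after applying $-\otimes_A k$, and the displayed equation collapses to
\[
(i \otimes k)\circ (s \otimes k) \;=\; 1_{\Ub \otimes k}
\]
in every degree. In particular $i^n \otimes k$ is a split surjection of $k$-vector spaces for every $n$.

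To finish, I would fix an integer $n$ and use the direct-summand hypothesis to write $\Ub^n = \Vb^n \oplus W^n$. Then $i^n$ is the inclusion of the first summand, and after tensoring with $k$ its image is exactly $\Vb^n \otimes k$ inside $(\Vb^n \otimes k) \oplus (W^n \otimes k)$. Surjectivity of $i^n \otimes k$ then gives $W^n \otimes_A k = 0$, and since $W^n$ is a finitely generated $A$-module, Nakayama's lemma forces $W^n = 0$. Thus $\Vb^n = \Ub^n$ for every $n$, which is precisely $\Vb = \Ub$, contradicting the hypothesis.

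I do not foresee any serious obstacle: the proposition is essentially Nakayama's lemma combined with the defining feature of minimality. The only small piece of book-keeping is to note that the argument uses only the minimality of $\Ub$ (not of $\Vb$), since both the identity $1_{\Ub}$ and the homotopy $h$ are endomorphisms of $\Ub$; minimality of $\Vb$ does follow automatically from $\partial_{\Vb}$ being the restriction of $\partial_{\Ub}$ and the summand hypothesis, but it plays no role in the argument.
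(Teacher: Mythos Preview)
Your proof is correct and follows essentially the same approach as the paper: both use minimality of $\Ub$ to make the homotopy terms $\partial h + h\partial$ vanish modulo $\m$, forcing $i\circ s$ (the paper writes $i\circ r$) to be the identity on $\Ub\otimes_A k$ and hence an isomorphism degreewise, which yields $\Vb^n = \Ub^n$. The only cosmetic difference is that the paper concludes via a rank comparison ($r$ is a monomorphism, so $\rank \Ub^n \le \rank \Vb^n$), whereas you phrase the same step as Nakayama applied to the complement $W^n$.
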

\begin{proof}
Suppose if possible $i$ is a retraction. Then there is $r \colon \Ub \rt \Vb$ such that $i\circ r = 1_{\Ub}$ in
$K^{-,b}_f(\proj A)$. As $\Ub$ is a minimal complex we get that  $i\circ r $ is an isomorphism on $\Ub$ (considered in $C^{-,b}_f(\proj A)$). In particular $r$ is a monomorphism in $C^{-,b}_f(\proj A)$.  So $\rank \Ub^n \leq \rank \Vb^n$ for all $n \in \Z$. As $\Vb^n$ is a direct summand of $\Ub^n$ for all $n \in \Z$ we get that $\Ub^n = \Vb^n$ for all
$n \in \Z$. Thus $\Vb = \Ub$ which is a contradiction.
\end{proof}
Next we prove the following result:
\begin{proposition}\label{extend-homotopy}
Let $(A,\m)$ be a Noetherian local ring. Let $\Ub  \in K^{-,b}(\proj A)$. Let $\Vb \subseteq \Ub$ be a sub-complex such that $\Vb^n$ is a direct summand of $\Ub^n$ for all $n \in \Z$. Let $\Wb \in K^{-,b}(\proj A)$ be such that $H^n(\Wb) = 0$ for $n \leq  m$. Suppose
$\Vb^n = \Ub^n$ for all  $n \geq  m$. Let $g \colon \Ub \rt \Wb$ be a chain map such that $g$ restricted to $\Vb$ is null-homotopic. Then $g$ is null-homotopic.
\end{proposition}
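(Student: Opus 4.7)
The plan is to reduce the statement to showing that $\Hom_{K^{-,b}(\proj A)}(\Qb, \Wb) = 0$, where $\Qb = \Ub/\Vb$ is the quotient complex, via the classical lifting-by-projectivity argument combined with the cohomological vanishing of $\Wb$ in low degrees.

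First, I would observe that since $\Vb^n$ is a direct summand of $\Ub^n$ for every $n$, the quotient $\Qb = \Ub/\Vb$ is again a bounded-above complex of finitely generated projective $A$-modules, and the hypothesis $\Vb^n = \Ub^n$ for $n \geq m$ forces $\Qb^n = 0$ for $n \geq m$. The degreewise split short exact sequence $0 \rt \Vb \rt \Ub \xrightarrow{\pi} \Qb \rt 0$ yields a distinguished triangle in $K^{-,b}(\proj A)$, so applying $\Hom_{K}(-,\Wb)$ produces the exact sequence
\[
\Hom_{K}(\Qb,\Wb) \xrightarrow{\pi^*} \Hom_{K}(\Ub,\Wb) \rt \Hom_{K}(\Vb,\Wb).
\]
By hypothesis, the image of $g$ in $\Hom_{K}(\Vb,\Wb)$ vanishes, so $g = \bar g \circ \pi$ in $K^{-,b}(\proj A)$ for some $\bar g\colon \Qb \rt \Wb$.

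It then suffices to show every chain map $f\colon \Qb \rt \Wb$ is null-homotopic. I would construct the null-homotopy $s^n\colon \Qb^n \rt \Wb^{n-1}$ by downward induction on $n$. Set $s^n = 0$ for $n \geq m$; this is legitimate as $\Qb^n = 0$ there. Assuming $s^k$ is defined for $k > n$ and satisfies the homotopy relation $f^k = \partial_\Wb^{k-1} s^k + s^{k+1}\partial_\Qb^k$ in those degrees, a short direct computation shows that the map $\alpha^n = f^n - s^{n+1}\partial_\Qb^n \colon \Qb^n \rt \Wb^n$ satisfies $\partial_\Wb^n \alpha^n = 0$. Hence $\alpha^n$ factors through $\ker \partial_\Wb^n = \image \partial_\Wb^{n-1}$, the equality holding because $H^n(\Wb) = 0$ for $n \leq m$ (and here $n \leq m-1$). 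Projectivity of $\Qb^n$ then lifts $\alpha^n$ to $s^n\colon \Qb^n \rt \Wb^{n-1}$ with $\partial_\Wb^{n-1} s^n = \alpha^n$, extending the homotopy to degree $n$. Iterating produces the full null-homotopy, so $\bar g = 0$ in $K$ and $g$ is null-homotopic.

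The main delicacy will be bookkeeping the indices: the vanishing $\Qb^n = 0$ for $n \geq m$ initializes the downward induction, while the cohomological vanishing $H^n(\Wb) = 0$ for $n \leq m$ is exactly what is needed to carry it out for all $n < m$. A concrete alternative which bypasses $\Qb$ is to extend the given homotopy $h$ on $\Vb$ directly to $\Ub$ using a degreewise splitting $\Ub^n = \Vb^n \oplus K^n$, defining the component of $H^n$ on $K^n$ by the same inductive procedure; the computations are essentially identical.
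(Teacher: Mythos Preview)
Your argument is correct, and the core inductive step --- lifting $\alpha^n$ through $\partial_\Wb^{n-1}$ using projectivity of the relevant term together with $H^n(\Wb)=0$ --- is exactly the mechanism the paper uses. The difference is organizational: the paper takes precisely the ``concrete alternative'' you sketch at the end, extending the given homotopy $s$ on $\Vb$ directly to $\Ub$ by writing $\Ub^r = \Vb^r \oplus F$ and defining $\widetilde{s}$ on a basis of $F$ via the same cycle-then-boundary computation. Your primary route instead passes to the quotient $\Qb = \Ub/\Vb$, uses the degreewise-split triangle to reduce to $\Hom_K(\Qb,\Wb)=0$, and then builds a fresh null-homotopy on $\Qb$. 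Your version isolates the obstruction more cleanly and avoids having to track the old homotopy on $\Vb$ while constructing the new piece; the paper's version is slightly more hands-on but avoids invoking the triangle. One small point of care in your approach: the hypotheses only place $\Ub$ and $\Wb$ in $K^{-,b}(\proj A)$, so $\Vb$ and $\Qb$ need not have bounded cohomology and the triangle should properly be taken in $K^-(\proj A)$; this is harmless since null-homotopy is insensitive to which ambient homotopy category one works in.
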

\begin{proof}
Suppose $s$ is a null-homotopy of the map $\Vb \rt \Wb$. We extend it to $ \widetilde{s} \colon \Ub \rt \Wb$ as follows. On $\Vb$ define  $ \widetilde{s}$ to equal $s$. As $\Vb^n = \Ub^n$ for all  $n \geq  m$ we have defined
$ \widetilde{s}$ on $\Ub^n$ for $n \geq m$.

Suppose $ \widetilde{s}$ is defined on $\Ub^n$ for all $n \geq r +1 (\geq m)$ then we define  $ \widetilde{s}$
on $\Ub^r$ as follows: We note that $\Ub^r = \Vb^r \oplus F$ for a finitely generated free $A$-module $F$. Let
$e_1, \ldots e_l$ be a basis of $F$.
We note that
\[
\xi = g(e_j) - \partial(\widetilde{s}(\partial(e_j))  \quad  \text{is a cycle.}
\]
As $H^i(\Wb) = 0 $ for $i < m$ and $r < m$ we get that $\xi $ is a boundary. Choose $\xi^\prime $ such that
$\partial(\xi^\prime) = \xi$. Define $\widetilde{s}(e_j) = \xi^\prime$. Thus we have defined  $\widetilde{s}$  on $\Ub^r$. Inductively we have defined $\widetilde{s}$  on $\Ub$. Thus $g$ is null-homotopic.
\end{proof}
We now give:
\begin{proof}[Proof of Theorem \ref{res-field}]
Suppose if possible there exists a right AR-triangle ending at $\Xb$.
Say we have an AR-triangle
$$ \alpha \colon \Ub \rt \Kb \rt \Xb \xrightarrow{g} \Ub[1]$$
in $K^{-,b}_f(\proj A)$. Assume $H^i(\Ub[1]) = 0 $ for $i  \geq - m$.
We consider two cases:

Case (1): $A$ is a complete intersection. Let ${\Yb}_1 $ be the Koszul complex on a minimal set of generators of $\m$.
Let ${\Yb}_2 = {\Yb}_1<T_1,\ldots T_r>$ be the DG-complex obtained by killing cycles in degree minus one of ${\Yb}_1$. Then Tate shows that ${\Yb}_2$ is a minimal resolution of $k$, see \cite[Theorem 4]{T}. Let $\Fc = \{ \Fb(i) \}_{ i \geq 0}$ be a good filtration of  ${\Yb}_2$. We may assume that $\Fb(i)^j  = {\Yb}_2^j$ for $i \geq - m$ for all $i \geq i_0$.
We have an inclusion $\phi_{i_0} \colon \Fb(i_0) \rt \Xb = {\Yb}_2$. Then  $\phi_{i_0}$ is not a retraction, see \ref{retract}. So $g\circ \phi_{i_0} = 0$ in $K^{-,b}_f(\proj A)$ (as $\alpha$ is an AR-triangle in  $K^{-,b}_f(\proj A)$).

By \ref{extend-homotopy} for $i \geq i_0$ the null homotopy $g\circ \phi_{i}$ can be extended to a null homotopy $g\circ \phi_{i + 1}$.
Thus inductively we have defined homotopy on $\Fb(i)$ for all $i \geq i_0$. But $\Xb = \bigcup_{i \geq i_0}\Fb(i)$. It follows that $g$ is
null-homotopic which is a contradiction.

Case (2): $A$ is \emph{not} a complete intersection.

Then $\Xb = \bigcup_{i \geq 1}{\Yb}_i $ where ${\Yb}_1$ is the Koszul complex on a minimal set of generators of $\m$ and for $i \geq 2$ the DG-complex ${\Yb}_{i}$ is obtained by killing cycles of ${\Yb}_{i-1}$ in degree $-i+1$.
We note that $\Xb^n = {\Yb}_i^n$ for $n \geq -i$. By \cite[Theorem]{G2} we get that ${\Yb}_i \neq \Xb$ for all $i$.

 Let $\Fc = \{ \Fb(i) \}_{ i \geq 0}$ be a good filtration of  ${\Yb}_m$.  We may assume that $\Fb(i)^j  = {\Yb}_m^j$ for $i \geq - m$ for all $i \geq i_0$.
We have an inclusion $\phi_{i_0} \colon \Fb(i_0) \rt \Xb = {\Yb}_m$. Then clearly $\phi_{i_0}$ is not a retraction  see \ref{retract}. So $g\circ \phi_{i_0} = 0$ in $K^{-,b}_f(\proj A)$ (as $\alpha$ is an AR-triangle in  $K^{-,b}_f(\proj A)$). Then as constructed above we can extend to a homotopy from ${\Yb}_m \rt \Ub[1]$.

Now suppose the map $g \colon \Xb \rt \Ub[1]$ when restricted to ${\Yb}_i$ is null-homotopic for some $i \geq m$. Then  by \ref{extend-homotopy} we can extend the homotopy to ${\Yb}_{i+1}$. As $\Xb = \bigcup_{i \geq m}{\Yb}_i $
we have that we have extended the homotopy to $\Xb$. In particular $g$ is null-homotopic which is a contradiction.
\end{proof}

\section{Proof  of Theorem \ref{cm-gor-intro}}
In this section we  give a proof of Theorem \ref{cm-gor-intro}. We restate it for the convenience of the reader.
\begin{theorem}\label{cm-gor}
Let $(A,\m)$ be a complete Noetherian local ring. Assume \\
$K^b_f(\proj A)$ has AR-triangles.
\begin{enumerate}[\rm (1)]
  \item if $A$ is \CM  \ then $A$ is Gorenstein.
  \item if $ \dim A = 1$ then $A$ is Gorenstein.
\end{enumerate}
 then $A$ is Gorenstein.
\end{theorem}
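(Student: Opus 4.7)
The plan for (1) is to translate AR-triangles into a Serre functor via Theorem~\ref{RV-main} and then pin that Serre functor down on a Koszul complex using Grothendieck local duality; for (2) I will reduce to (1) by forcing $A$ to be \CM. Assume $A$ is \CM. Since $A$ is complete, $A$ admits a canonical module $\omega_A$, a maximal \CM \ module; consequently $\projdim_A(\omega_A)\in\{0,\infty\}$, with $\projdim_A(\omega_A)=0$ equivalent to $A$ being Gorenstein. Hom-finiteness and the Krull--Schmidt property of $\Kc:=K^b_f(\proj A)$ (established essentially as in Lemma~\ref{tea}), combined with Theorem~\ref{RV-main}, convert the existence of AR-triangles into a Serre functor $F$ on $\Kc$. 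Fix a regular system of parameters $\mathbf{x}=x_1,\ldots,x_d$ (regular since $A$ is \CM) and let $\Xb=\Pb^{A/\mathbf{x}}$ be its Koszul complex; then $\Xb\in\Kc$ and I set $\Vb:=F(\Xb)\in\Kc$.

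The crux is identifying $\Vb$ with $\omega_A/\mathbf{x}\omega_A$ placed in cohomological degree $-d$. Serre duality gives $\Hom_{\Kc}(\Yb,\Vb)\cong\Hom_{\Kc}(\Xb,\Yb)^\vee$ for every $\Yb\in\Kc$, while Grothendieck local duality (available because $\omega_A[d]$ is a dualizing complex for the complete \CM \ ring $A$) identifies the same Matlis dual with $\Hom_D(\Yb,\Xb\otimes^{L}_A\omega_A[d])$. By Neeman's classification, $\Kc$ is thickly generated by $\Xb$ together with its shifts (cf.\ the remark after Lemma~\ref{basic-gor}), so a Yoneda-style argument -- verified on cohomology via a Koszul-based spectral-sequence computation of $\Hom_D(\Xb[n],\Vb)$ -- will yield $\Vb\simeq\Xb\otimes^{L}\omega_A[d]$ in $D^b_f(A)$. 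Since $\mathbf{x}$ is $\omega_A$-regular, $\Xb\otimes^{L}\omega_A\simeq\omega_A/\mathbf{x}\omega_A$, hence $\Vb\simeq\omega_A/\mathbf{x}\omega_A[d]$. But $\Vb$ is a perfect complex, forcing $\omega_A/\mathbf{x}\omega_A$ to have finite projective dimension over $A$; combined with the rigidity $\projdim_A(\omega_A/\mathbf{x}\omega_A)=\projdim_A(\omega_A)+d$, this produces $\projdim_A(\omega_A)<\infty$, so $\omega_A$ is free and $A$ is Gorenstein.

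For (2), the aim is first to show $\depth(A)\geq 1$, which forces $A$ to be \CM \ of dimension one, after which (1) concludes Gorenstein. Suppose for contradiction $\depth(A)=0$. Then $\Sc=\{0\}$ by Auslander--Buchsbaum, yet $\Kc$ may still be nonzero (for instance, over $A=k[[x,y]]/(x^2,xy)$ the complex $[A\xrightarrow{y}A]$ lies in $\Kc$). The plan is to pick an indecomposable $\Xb\in\Kc$ and rerun the local-duality comparison using the full dualizing complex $\omega_A^\bullet$ (not concentrated in a single degree, as $A$ is not \CM) in place of $\omega_A[d]$: the non-vanishing $H^0(\omega_A^\bullet)$ contributes extra cohomology to $\Xb\otimes^{L}\omega_A^\bullet$ in a degree incompatible with $F(\Xb)\in\Kc$ being a perfect complex, producing the required contradiction. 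The main obstacle is the Yoneda/cone-vanishing step of (1) -- upgrading a natural Hom-isomorphism on $\Kc$ to a quasi-isomorphism $\Vb\simeq\Xb\otimes^{L}\omega_A[d]$ in $D^b_f(A)$ -- and, for (2), adapting this identification to the non-concentrated dualizing complex.
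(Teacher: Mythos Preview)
Your strategy is quite different from the paper's, which never touches the canonical module or local duality. The paper's main tool is Proposition~\ref{width-invar}: the Serre functor $F$ preserves $\width$. For (1), with $\Xb$ the Koszul complex on a system of parameters $I=(\mathbf{x})$, the equality $\width F(\Xb)=d$ together with the acyclicity lemma force $F(\Xb)$ to be a shifted resolution of some finite-length module $M$; the count $\mu(\Hom(\Xb,\Xb[d]))=1$ shows $M$ is cyclic, say $M=A/J$; comparing $\Hom(\Xb,\Xb)\cong\Hom(F(\Xb),F(\Xb))$ gives $J=I$; and the Serre identity $\Hom(\Xb,\Xb)^\vee\cong\Hom(\Xb,F(\Xb))$ then reads $(A/I)^\vee\cong A/I$, so $A/I$ is Gorenstein and hence so is $A$. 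For (2) the paper takes $\Xb=(A\xrightarrow{x}A)$ with $x$ a parameter satisfying $(x)\cap\Gamma(A)=0$ (Proposition~\ref{disjoint}), pins $F(\Xb)$ down to $\Xb[s]$ for some $s\in\{-1,0,1\}$ by width and rank computations, and rules out each $s$ by explicit socle and length calculations inside $\Hom(\Xb,\Xb)$.

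For (1), your local-duality identification $\Hom_D(\Xb,\Yb)^\vee\cong\Hom_D(\Yb,\Xb\otimes^L\omega_A[d])$ is correct, but the step you yourself flag as the main obstacle is a real gap, and your proposed fix (``verified on cohomology via a Koszul-based spectral-sequence computation of $\Hom_D(\Xb[n],\Vb)$'') is not the right mechanism: matching Hom-groups numerically produces no morphism. One must instead evaluate the composite natural isomorphism at $\Yb=\Vb=F(\Xb)\in\Kc$ to obtain an honest map $\phi\colon\Vb\to\Xb\otimes^L\omega_A[d]$ in $D^b_f(A)$, and then show that $\Hom_D(\Kb[n],\cone(\phi))=0$ for all $n$ forces $\cone(\phi)=0$; this holds because for any nonzero $\Zb\in D^b_f(A)$ with top cohomology in degree $i_0$ one has $H^{i_0}(\Kb\otimes^L\Zb)=H^{i_0}(\Zb)/\mathbf{x}H^{i_0}(\Zb)\neq 0$, whence $\Kb^*\otimes^L\Zb\neq 0$. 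With this filled in your argument for (1) succeeds, though with more machinery than the paper's.

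For (2) the gap is more serious. Your claimed contradiction---that the non-\CM\ dualizing complex contributes ``extra cohomology \dots\ incompatible with $F(\Xb)\in\Kc$ being a perfect complex''---is unsubstantiated: perfection alone imposes no bound on cohomological amplitude. Worse, for $\Xb=(A\xrightarrow{x}A)$ one checks that $x$ acts surjectively on $H^1_\m(A)$ and hence injectively on $H^{-1}(\omega_A^\bullet)\cong H^1_\m(A)^\vee$, so $\Xb\otimes^L\omega_A^\bullet$ has cohomology only in two consecutive degrees---no extra degree appears. Extracting a contradiction from here requires precisely the sort of width/rank constraints and socle computations the paper carries out; as written, your plan for (2) does not reach one.
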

We will need the following result:
\begin{proposition}\label{width-invar}
Let $(A,\m), (B,\n)$ be Henselian  local rings.  Let \\
$\Xb \in   K^b_f(\proj A)$ be a minimal complex with  $\width \Xb = l$. Then
\begin{enumerate}[\rm (1)]
\item
If $\Xb = 0 \rt F_l \rt \cdots \rt F_0 \rt 0$ then $\mu(\Hom(\Xb, \Xb[l])) = \rank(F_l)\rank(F_0)$.
\item
$\Hom(\Xb, \Xb[j]) = \begin{cases} \neq 0 & \text{if $l = j$} \\ = 0 & \text{for $j > l$}     \end{cases}$
\item
Let $F \colon K^b_f(\proj A) \rt K^b_f(\proj B)$ be an equivalence commuting with $[1]$. Then for any $\Xb \in   K^b_f(\proj A)$ we have $\width \Xb = \width F(\Xb)$.
\end{enumerate}
\end{proposition}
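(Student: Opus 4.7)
My plan is to work at the chain-complex level, exploiting minimality in the first two parts, and then to recognize in (2) that $\width \Xb$ is a triangle-categorical invariant which is preserved by any equivalence $F$ as in (3).

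For (1), after shifting assume $\Xb$ sits in cohomological degrees $0,1,\ldots,l$ with $\Xb^0 = F_l$ and $\Xb^l = F_0$, both nonzero by the width hypothesis. A cochain map $f\colon \Xb \to \Xb[l]$ has components $f^i\colon \Xb^i \to \Xb^{i+l}$, and the conditions $\Xb^i \ne 0$, $\Xb^{i+l} \ne 0$ force $i=0$; moreover $\partial^{0}_{\Xb[l]}\colon \Xb^l \to \Xb^{l+1}=0$ and $\partial^{-1}_{\Xb}\colon 0 \to \Xb^0$ are both zero, so the cochain-map condition is vacuous. Thus $\Hom_{C^b(\proj A)}(\Xb, \Xb[l]) = \Hom_A(F_l, F_0)$. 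A null-homotopy expresses $f^0 = \partial^{-1}_{\Xb[l]}\circ h^0 + h^1\circ \partial^0_{\Xb}$; both differentials occurring here are, up to sign, differentials of $\Xb$, which by minimality have image in $\m \Xb$, so the submodule of null-homotopic maps lies in $\m \Hom_A(F_l, F_0)$. Nakayama then gives $\mu(\Hom(\Xb, \Xb[l])) = \rank(F_l)\rank(F_0)$.

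Part (2) is immediate: for $j>l$ the index conditions $0\le i\le l$ and $0\le i+j \le l$ are incompatible, so already $\Hom_{C^b(\proj A)}(\Xb, \Xb[j]) = 0$; for $j=l$, part (1) yields $\Hom(\Xb, \Xb[l]) \ne 0$ since both $F_l$ and $F_0$ are nonzero. Part (3) then follows formally: the width is defined via the minimal model and is invariant under isomorphism in $K^b$, so we may replace $\Xb$ by its minimal model and $F(\Xb)$ by a minimal model $\Ub'$ in $K^b_f(\proj B)$. By (2) applied on either side, $\width\Xb$ is the largest $j$ with $\Hom(\Xb, \Xb[j]) \ne 0$, and similarly for $\Ub'$. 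Since $F$ is an equivalence commuting with $[1]$, we have $\Hom(\Xb, \Xb[j]) \cong \Hom(F\Xb, F\Xb[j]) \cong \Hom(\Ub', \Ub'[j])$ for every $j$, so the two maxima agree and $\width\Xb = \width F(\Xb)$.

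The main obstacle will be the bookkeeping in part (1): one must keep the signs and index shifts of $\Xb[l]$ straight and verify that \emph{both} terms of the null-homotopy formula factor through $\m$. Once this verification is in place, (2) is a direct consequence of the index count and (3) is a formal consequence of the characterization of $\width$ via $\Hom$-vanishing.
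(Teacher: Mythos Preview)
Your proof is correct and follows essentially the same approach as the paper: in (1) and (2) your argument is exactly the paper's (the paper simply asserts that null-homotopic maps land in $\m\Hom_A(F_l,F_0)$ and omits the bookkeeping you supply), and in (3) the paper derives $\width F(\Xb)\ge\width\Xb$ from (2) and then invokes $F^{-1}$ for the reverse inequality, which is a trivial repackaging of your characterization of $\width$ as the largest $j$ with $\Hom(\Xb,\Xb[j])\neq 0$.
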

\begin{proof}
(1) We note that if $f \in \Hom(\Xb, \Xb[l])$ is null-homotopic then \\
$f \in \m \Hom_A(F_l, F_0)$. The result follows.

(2) The first assertion follows from (1). The second is obvious.

(3) We have $\Hom(\Xb, \Xb[l]) \cong \Hom(F(\Xb), F(\Xb)[l]) \neq 0$. So $\width F(\Xb) \leq \width \Xb$. By considering $F^{-1}$ we obtain the reverse inequality. The result follows.
\end{proof}

We now give
\begin{proof}[Proof of Theorem \ref{cm-gor}(1)]
If $\dim A = 0$ then result holds by  \cite[3.4]{happel-3}. So assume $d = \dim A > 0$.  Let $x_1,\ldots, x_d$ be  a system of parameters of $A$. Set $I = (x_1,..,x_d)$.  Let $\Xb$ be a minimal resolution of $A/I$.  As $K^b_f(\proj A)$ has AR-triangles we have  a Serre functor $F \colon     K^b_f(\proj A)      \rt K^b_f(\proj A)$ with
$\Hom(\Xb, \Xb) \cong \Hom(\Xb, F(\Xb))^\vee$. We note that $F$ is an equivalence. So by \ref{width-invar},  $d = \width \Xb = \width F(\Xb)$. By acyclicity lemma we get that $ F(\Xb)$ is a minimal resolution of some finite length module $M$ (upto shift). Note
$$  A/I = \Hom(\Xb, \Xb[d]) \cong \Hom(F(\Xb), F(\Xb)[d]).$$
So by  \ref{width-invar}(1) we get that $M$ is cyclic, say $M = A/J$. We note that
$$  A/I = \Hom(\Xb, \Xb) \cong \Hom(F(\Xb), F(\Xb)) = A/J.$$
So $I = J$. So we have
$$ (A/I)^\vee = \Hom(\Xb, \Xb)^\vee \cong \Hom(\Xb, F(\Xb)) = (A/I)^{\binom{d}{m}}$$
for some $m$. Counting lengths we get that $m = 0$ or $d$. Furthermore as
$$(A/I)^\vee \cong A/I $$ we get that  $A/I$ is  a Gorenstein ring. As $x_1,\ldots, x_d$ is  an $A$-regular sequence we get that $A$ is Gorenstein.
\end{proof}

Let $\Gamma(A)$ be the ideal of $\m$-torsion elements of $A $.
To prove Theorem \ref{cm-gor}(2) we need the following result:
\begin{proposition}\label{disjoint}
Let $(A,\m)$ be a one-dimensional  complete Noetherian local ring. Then there exists a parameter $x$  of $A$ with $(x) \cap  \Gamma(A) = 0$.
\end{proposition}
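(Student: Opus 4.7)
The plan is a direct prime-avoidance argument; completeness will not actually be used. Write $\Gamma = \Gamma(A)$. Since $A$ is Noetherian, $\Gamma$ is a finitely generated ideal each of whose generators is annihilated by some power of $\m$, so there exists an integer $N \geq 1$ with $\m^N \Gamma = 0$.

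Let $P_1, \ldots, P_s$ be the minimal primes of $A$. Because $\dim A = 1$, each $P_i \subsetneq \m$, so $\m \not\subseteq P_i$, and hence $\m^N \not\subseteq P_i$ for every $i$. By prime avoidance I can choose $x \in \m^N \setminus \bigcup_{i=1}^s P_i$. Such an $x$ lies in $\m$ and avoids every minimal prime of $A$, so it is a parameter of $A$. Moreover $x\Gamma \subseteq \m^N \Gamma = 0$.

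It remains to upgrade $x\Gamma = 0$ to $xA \cap \Gamma = 0$. Since $A/\Gamma$ has no $\m$-torsion, $\m \notin \Ass(A/\Gamma)$, and since $\dim A = 1$ every non-maximal prime of $A$ is minimal; thus $\Ass(A/\Gamma) \subseteq \{P_1, \ldots, P_s\}$, so $x$ is a non-zero-divisor on $A/\Gamma$. Given $a \in A$ with $xa \in \Gamma$, reducing modulo $\Gamma$ gives $x\bar{a} = 0$ in $A/\Gamma$, which forces $\bar{a} = 0$; that is, $a \in \Gamma$, and therefore $xa \in x\Gamma = 0$. This yields $xA \cap \Gamma = 0$, as required.

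There is no substantive obstacle here; the only step worth flagging is the reduction $xA \cap \Gamma(A) = x\Gamma(A)$, which rests on the fact that any parameter of a one-dimensional local ring is a non-zero-divisor on $A/\Gamma(A)$—a property that follows formally from the containment $\Ass(A/\Gamma(A)) \subseteq \{P_1, \ldots, P_s\}$ above.
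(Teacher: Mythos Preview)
Your proof is correct, and like the paper's it never uses completeness. The approaches, however, are genuinely different. The paper picks an arbitrary parameter $y$ and applies the Artin--Rees lemma to the pair $((y),\Gamma(A))$: for $n\gg 0$ one has $(y)^n\cap\Gamma(A)=(y)^{n-t}\bigl((y)^t\cap\Gamma(A)\bigr)$, and the right side vanishes once $n-t$ exceeds the $\m$-torsion exponent of $(y)^t\cap\Gamma(A)$; then $x=y^n$ works. Your argument instead chooses $x\in\m^N\setminus\bigcup P_i$ by prime avoidance, obtaining $x\Gamma=0$ directly, and then upgrades to $(x)\cap\Gamma=0$ via the observation that $x$ is a non-zero-divisor on $A/\Gamma$ because $\Ass(A/\Gamma)$ consists only of minimal primes. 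The paper's route is shorter and gets the intersection statement in one stroke from Artin--Rees; yours is more elementary (no Artin--Rees) and isolates the structural reason the intersection vanishes, at the cost of the extra step analysing $\Ass(A/\Gamma)$.
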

\begin{proof}
Let $y$ be a parameter  of $A$. By  Artin-Rees Lemma there exists $t \geq 1$ such that for $n \gg 0$ we have
$$ (y)^n \cap  \Gamma(A)  = (y)^{n-t}( (y)^t \cap  \Gamma(A).$$
We note that $(y)^{n-t}( (y)^t \cap  \Gamma(A) = 0$ for $n \gg 0$. Taking $x = y^n$ for $n$ sufficiently large yields our result.
\end{proof}
We now give
\begin{proof}[Proof of Theorem \ref{cm-gor}(2)]
By (1) it suffices to show $A$ is \CM. Suppose if possible $A$ is not \CM. So $\depth A = 0$. In particular
$\Gamma(A) \neq 0$.
By \ref{disjoint} we can choose a parameter $x$ of $A$ with $(x) \cap  \Gamma(A) = 0$.
Let
\[
\Xb \colon   0 \rt A \xrightarrow{x} A \rt 0,
\]
concentrated in degrees $0.-1$.
Note $H^0(\Xb) = A/(x)$ and $H^{-1}(\Xb) = (0 \colon x)$. Note $\ell(H^0(\Xb)) > \ell(H^{-1}(\Xb))$; see \cite[4.7.6]{BH}.
 As $K^b_f(\proj A)$ has AR-triangles we have  a Serre functor $F \colon     K^b_f(\proj A)      \rt K^b_f(\proj A)$ with
$$\Hom(\Xb, \Xb) \cong \Hom(\Xb, F(\Xb))^\vee.$$
 We note that $F$ is an equivalence. So by \ref{width-invar},  $1 = \width \Xb = \width F(\Xb)$.
 So
 \[
F(\Xb) \colon   0 \rt A^m \rt A^n \rt 0.
\]
As $\ell(H^*(F(\Xb)) < \infty$ it easily follows that $m  = n$.

As $F$ is an equivalence we get that
\[
\Hom(\Xb, \Xb[1])  \cong \Hom(F(\Xb), F(\Xb)[1]).
\]
It is easily verified that $\Hom(\Xb, \Xb[1])  = A/(x)$. By   \ref{width-invar}(1) we get that
$$\mu(\Hom(F(\Xb), F(\Xb)[1])) = m^2.$$ So $m = 1$. Let
\[
F(\Xb) \colon   0 \rt A \xrightarrow{y} A \rt 0.
\]
Note  $\Hom(F(\Xb), F(\Xb)[1])  = A/(y)$. So $(x) = (y)$. In particular  $y = \lambda x$ where $\lambda$ is a unit. Thus $F(\Xb) = \Xb[s]$ for some $s$.

As $ \Hom(\Xb, F(\Xb))^\vee  \cong  \Hom(\Xb, \Xb)  \neq 0$, we get $s = 1, 0, -1$. We will show that $s = -1$ is not possible. Furthermore we show that in both cases $s = 0, 1$ we get $\depth A > 0$. This will prove our result.

Case 1: $s = -1$. It is easily  verified that $\Hom(\Xb, \Xb[-1])  =  (0 \colon x)$. The natural map
$A \rt \Hom(\Xb, \Xb)$ has kernel $(x)$. Thus we have a natural map $\eta \colon  A/(x) \rt \Hom(\Xb, \Xb)$.
Also note that if $f \in \Hom(\Xb, \Xb)$ Then $H^0(f)\colon A/(x) \rt A/(x)$ is $A$-linear. Furthermore
$H^0(-)\circ \eta = 1_{A/(x)}$. So we have $\Hom(\Xb, \Xb) = A/(x)\oplus L$ for some finitely generated $A$-module $L$. So we have
\[
A/(x) \oplus L  = \Hom(\Xb, \Xb) \cong \Hom(\Xb, F(\Xb))^\vee = \Hom(\Xb, \Xb[-1]))^\vee  = (0 \colon x))^\vee.
\]
So we have $\ell(A/(x)) \leq \ell(0 \colon x)$ which is a contradiction.

Case 2: $s = 1$.
We have
\[
A/(x) \oplus L  = \Hom(\Xb, \Xb) \cong \Hom(\Xb, F(\Xb))^\vee = \Hom(\Xb, \Xb[1]))^\vee  = (A/x))^\vee.
\]
Computing lengths we have $L = 0$. So the natural map $\eta \colon  A/(x) \rt \Hom(\Xb, \Xb)$ is an isomorphism.
Let $t \in (0 \colon \m)$ be non-zero.
Consider the following chain map $\psi \colon \Xb \rt \Xb$ where $\psi_0 = 0$ and $\psi_{-1}$ is multiplication by $t$. As $\eta$ is an isomorphism we get that $\psi$ is multiplication by some $\alpha \in A$. So
$\eta(\alpha)- \psi$ is null-homotopic say defined by $s \colon \Xb^0 \rt \Xb^{-1}$. Then $sx = \alpha$ and $sx = \alpha - t$. So $t = 0$ a contradiction.

Case 3: $s = 0$.
In this case we have
\[
S = \Hom(\Xb, \Xb) \cong \Hom(\Xb, F(\Xb))^\vee = \Hom(\Xb, \Xb))^\vee
\]
Note as $\Xb$ is indecomposable we get that $S$ is a local  Artin $A$-algebra. As $S \cong S^\vee$ we get that $S$ is self-injective.
So $S$ has one-dimensional socle.
Let $\phi \in S$. Say $\phi_0, \phi_{-1}$ are multiplication by $u, v$ respectively. Then clearly $u$ is a unit if and only if $v$ is a unit. Furthermore if both $u, v$ are units then $\phi$ is an isomorphism.
Thus $\phi \in \rad(S)$ if and only if both $u,v \in \m$.

Let $t \in (0 \colon \m)$ be non-zero. Let $\alpha \colon \Xb \rt \Xb$ be defined by  $\alpha_0 = 0$ and $\alpha_{-1}$ is multiplication by $t$.
Also define $\beta \colon \Xb \rt \Xb$ by  $\beta_{-1} = 0$ and $\beta_{0}$ is multiplication by $t$.
Then $\alpha, \beta \in $ socle of $S$. Furthermore  it is easily verified that $\alpha \neq 0$ and $\beta \neq 0$. So $\alpha = \theta \circ \beta$ for some unit $\theta \in S$.  So
$\alpha - \theta \circ \beta $ is null-homotopic say defined by $s \colon \Xb^0 \rt \Xb^{-1}$. Note we get $sx = t$. So $t \in (x)$ which contradicts our choice of $x$.

Thus $\depth A  > 0$. So $A$ is \CM. By (1) we also get that $A$ is Gorenstein.
\end{proof}
\section{Proof of Theorem \ref{ar}}
In this section we give a proof of Theorem \ref{ar}. Throughout $(A,\m)$ is a Henselian Noetherian local ring. Set $\Kc = K^b(\proj A)$.

\s \label{setup-y} Let $\Xb \in \Kc$ be an indecomposable minimal complex. Set
\begin{align*}
  \Sc(\Xb)  =   \{ s \colon \Xb[-1] &\xrightarrow{u_s} {\Ub}_s \rt {\Kb}_s \rt  \Xb \mid  \\
   & s \ \text{ is a triangle in } \ \Kc \ \text{with ${\Ub}_s$ indecomposable and $u_s \neq 0$} \}.
\end{align*}

Our first result is
\begin{lemma}\label{non-empty}
(with hypotheses as in \ref{setup-y})  $\Sc(\Xb)$ is non-empty.
\end{lemma}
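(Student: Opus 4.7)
The plan is to exhibit an explicit element of $\Sc(\Xb)$. The underlying observation is that any morphism $u \colon \Xb[-1] \rt \Ub$ in the triangulated category $\Kc$ completes, by the axioms of a triangulated category, to a distinguished triangle
\[
\Xb[-1] \xrightarrow{u} \Ub \rt \cone(u) \rt \Xb,
\]
whose third term is automatically $\Xb$. This is precisely the shape required in the definition of $\Sc(\Xb)$, so producing one member of $\Sc(\Xb)$ amounts to producing an indecomposable $\Ub \in \Kc$ together with a nonzero morphism $u \colon \Xb[-1] \rt \Ub$.

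The simplest choice already works: take $\Ub := \Xb[-1]$ and $u := 1_{\Xb[-1]}$. Since $\Xb$ is assumed indecomposable and the shift $[-1]$ is an autoequivalence of $\Kc$, $\Xb[-1]$ is indecomposable; and the identity morphism is visibly nonzero. Completing $u$ to a distinguished triangle yields the split triangle
\[
\Xb[-1] \xrightarrow{\,1_{\Xb[-1]}\,} \Xb[-1] \rt 0 \rt \Xb,
\]
which satisfies every clause in the definition of $\Sc(\Xb)$. Therefore $\Sc(\Xb) \neq \emptyset$.

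There is no substantive obstacle at this step: the argument is essentially a direct application of the triangulated-category axioms, plus the fact that the shift preserves indecomposability. The real work will appear in the subsequent lemmas of this section, where one needs to choose specific elements of $\Sc(\Xb)$ (for instance, non-split ones, or ones that are minimal in an appropriate width/rank ordering) in order to extract information about $H^*(\Xb)$ and prove Theorem \ref{ar}; however, for the bare non-emptiness statement no such refinement is required.
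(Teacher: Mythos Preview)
Your argument is correct. The definition of $\Sc(\Xb)$ asks only for a triangle $\Xb[-1]\xrightarrow{u}\Ub\rt\Kb\rt\Xb$ with $\Ub$ indecomposable and $u\neq 0$; since $\Xb$ is indecomposable (hence nonzero) and the shift is an autoequivalence, the identity on $\Xb[-1]$ already qualifies.

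This is genuinely different from the paper's proof. The paper instead takes a short exact sequence $0\rt\Vb\rt\Pb\rt\Xb\rt 0$ in $C^b(\proj A)$ with $\Pb$ contractible, obtains the triangle $\Xb[-1]\xrightarrow{u}\Vb\rt\Pb\rt\Xb$, argues $u\neq 0$ (else $H^*(\Xb)=0$), and then passes to an indecomposable summand of $\Vb$. Your route is more economical for the bare non-emptiness statement. What the paper's approach buys is that it rehearses exactly the construction needed later in Theorem~\ref{almost}: there one must produce elements of $\Sc(\Xb)$ whose first map survives localization at a non-maximal prime, and for that one really uses the short exact sequence with contractible middle term (so that $u_P\neq 0$ follows from ${\Vb}_P\neq 0$). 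The identity-map triangle you produce is of no help for that step, since after localization it remains the trivial split triangle and carries no information about $H^*(\Xb)$. So your proof is fine for Lemma~\ref{non-empty}, but the paper's longer argument is not wasted effort---it is a warm-up for the key localization argument to come.
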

\begin{proof}
  Let $t \colon 0 \rt \Vb \rt \Pb \rt \Xb \rt 0$ be a short exact sequence in $C^b(\md(A))$ with $\Pb$ projective. It is well known that $\Pb$ is a finite direct sum of shifts of complex of the form $0 \rt A\xrightarrow{1}  A \rt 0$. Thus $t$ is a short exact sequence in $C^b(\proj A)$. So we have a triangle
  in $\Kc $
  $$ l\colon \Xb[-1] \xrightarrow{u} \Vb \rt \Pb \rt \Xb.$$
  We assert that $u \neq 0$. Otherwise $l$ is split. So $\Vb$ is a direct summand of $\Pb$. But $\Pb = 0$ in $\Kc$. So $\Vb = 0$ in $\Kc$. This forces $H^*(\Vb) = 0$. Taking long exact sequence in cohomology induced by $t$ we get $H^*(\Xb)= 0$. So by \ref{m-Lemma} we get $\Xb = 0$ in $\Kc$, a contradiction.

  Say $\Vb = \Vb(1)\oplus \Vb(2) \cdots \oplus \Vb(r)$ with $\Vb(i)$ indecomposable in $\Kc$ for all $i$. Let $u = (u_1,\ldots, u_r)$ along this decomposition. As $u\neq 0$ some $u_i \neq 0$. We note that $u_i$ induces a triangle
  $$ s\colon \Xb[-1] \xrightarrow{u_i} \Vb(i) \rt \Kb \rt \Xb.$$
  It follows that $\Sc(\Xb)$ is non-empty.
\end{proof}

\s \emph{A partial order on $\Sc(\Xb)$}. Let $s, t \in \Sc(\Xb)$. We say $s > t$ if there is a morphism of triangles $s\rt t$

\[
  \xymatrix
{
 \Xb[-1]
\ar@{->}[r]
\ar@{->}[d]^{j}
 & {\Ub}_s
\ar@{->}[r]
\ar@{->}[d]
& {\Kb}_s
\ar@{->}[r]
\ar@{->}[d]
&\Xb
\ar@{->}[d]
\\
 \Xb[-1]
\ar@{->}[r]
 & {\Ub}_t
\ar@{->}[r]
& {\Kb}_t
    \ar@{->}[r]
    &\Xb
\
 }
\]
where $j$ is the identity map.

We first prove:
\begin{lemma}\label{partial}
(with hypotheses as in \ref{setup-y}) If $s > t $ and $t > s$ then $s \cong t$.
\end{lemma}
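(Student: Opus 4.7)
The plan is to reduce to Lemma~\ref{end-tri} by rotating the triangles. Given morphisms of triangles $\alpha \colon s \rt t$ and $\beta \colon t \rt s$ in $\Sc(\Xb)$, both of which are the identity on the $\Xb[-1]$ and $\Xb$ components, I form the composite $\theta = \beta\circ \alpha \colon s \rt s$. This is an endomorphism of the triangle $s$ whose components at $\Xb[-1]$ and $\Xb$ are both the identity.

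Next I rotate $s$ to obtain the triangle
\[
s^\prime \colon \Ub_s \rt \Kb_s \rt \Xb \xrightarrow{h} \Ub_s[1],
\]
where $h$ corresponds, up to sign, to $u_s[1]$. Since $u_s\neq 0$ by the definition of $\Sc(\Xb)$, we have $h\neq 0$; moreover $\Ub_s$ is indecomposable, hence so is $\Ub_s[1]$. The rotated endomorphism $\theta^\prime \colon s^\prime \rt s^\prime$ still has $\theta^\prime_\Xb = 1_\Xb$. This puts us exactly in the situation of Lemma~\ref{end-tri} (with the roles of $M$, $N$ played by $\Xb$, $\Ub_s$), so $\theta^\prime$ — and therefore $\theta$ — is an isomorphism of triangles. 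In particular the components $\theta_{\Ub_s}$ and $\theta_{\Kb_s}$ are isomorphisms, so $\beta_{\Ub_s}$ admits a left inverse and $\alpha_{\Ub_s}$ admits a right inverse (and similarly at $\Kb_s$).

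Running the same argument with the roles of $s$ and $t$ reversed, I get that $\alpha\circ\beta \colon t \rt t$ is an isomorphism of triangles. Combining the two facts, both $\alpha_{\Ub_s}$ and $\alpha_{\Kb_s}$ (equivalently $\beta_{\Ub_s}$ and $\beta_{\Kb_s}$) are isomorphisms in $\Kc$. Since they are also identity on the outer terms, $\alpha$ (and $\beta$) is an isomorphism of triangles, establishing $s\cong t$.

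The only subtlety I anticipate is bookkeeping for the rotation: one has to check that the connecting map of the rotated triangle is nonzero precisely because $u_s\neq 0$, and that the composite morphism of triangles $\beta\circ\alpha$ really does induce a morphism of rotated triangles to which Lemma~\ref{end-tri} applies verbatim. Both are formal consequences of the axioms of a triangulated category, so no genuine obstacle arises.
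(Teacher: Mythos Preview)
Your proof is correct and essentially the same as the paper's. The paper simply repeats the Jacobson-radical argument of Lemma~\ref{end-tri} inline (applied to the map on $\Ub_s$ using $h\circ u_s = u_s$), whereas you rotate and invoke Lemma~\ref{end-tri} directly; the underlying idea is identical.
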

\begin{proof}
  We compose the two morphism $s \rt t$ and $t\rt s$ to obtain a morphism $s \rt s$
  \[
  \xymatrix
{
 \Xb[-1]
\ar@{->}[r]^{u_s}
\ar@{->}[d]^{j}
 & {\Ub}_s
\ar@{->}[r]
\ar@{->}[d]^{h}
& {\Kb}_s
\ar@{->}[r]
\ar@{->}[d]
&\Xb
\ar@{->}[d]
\\
 \Xb[-1]
\ar@{->}[r]^{u_s}
 & {\Ub}_s
\ar@{->}[r]
& {\Kb}_s
    \ar@{->}[r]
    &\Xb
\
 }
\]
where $j$ is the identity map.
If $h$ is not an isomorphism then as ${\Ub}_s$ is indecomposable we get that $h $ is in the Jacobson radical of $\Rc = \Hom_\Kc({\Ub}_s, {\Ub}_s)$.
We obtain $h\circ u_s = u_s$. So we get $(1-h)\circ u_s = 0$. As $h$ is in the Jacobson radical of $\Rc$ we get that $1-h$ is invertible. So $u_s = 0$, a contradiction. So $h$ is an isomorphism. It follows that the above diagram is an isomorphism. Similarly the composite of $t\rt s$ and $s \rt t$ is an isomorphism $t \rt t$. It follows that $s\cong t$.
\end{proof}
Thus we have a well-defined partial order on $\Sc(\Xb)$. To be completely precise one has to identify elements of $\Sc(\Xb)$ upto isomorphism.

Next we show that $\Sc(\Xb)$ is directed,i.e,
\begin{lemma}\label{direct}
(with hypotheses as in \ref{setup-y}) If $s, t \in \Sc(\Xb)$ then there exists $l \in \Sc(\Xb)$ with $s > l$ and $t > l$.
\end{lemma}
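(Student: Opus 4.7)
The plan is to realize $l$ as an indecomposable summand of a triangulated pushout of $u_s, u_t$ under $\Xb[-1]$, and then verify the order relations $s>l$ and $t>l$. First I would form the morphism $\phi \colon \Xb[-1] \to \Ub_s \oplus \Ub_t$ with components $u_s$ and $-u_t$, and complete it to a triangle
\[
\tau \colon \Xb[-1] \xrightarrow{\phi} \Ub_s \oplus \Ub_t \xrightarrow{(\alpha,\beta)} \Ub \xrightarrow{\gamma} \Xb.
\]
Since two consecutive morphisms in a triangle compose to zero, $\alpha u_s - \beta u_t = 0$; define $u_l := \alpha u_s = \beta u_t$.

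The crux is to show $u_l \neq 0$. Apply $\Hom_\Kc(\Xb[-1],-)$ to $\tau$ to obtain the exact segment
\[
\Hom_\Kc(\Xb[-1], \Xb[-1]) \xrightarrow{\phi_*} \Hom_\Kc(\Xb[-1], \Ub_s \oplus \Ub_t) \xrightarrow{(\alpha,\beta)_*} \Hom_\Kc(\Xb[-1], \Ub).
\]
As $u_l = (\alpha,\beta)_*(u_s, 0)$, vanishing of $u_l$ would place $(u_s, 0)$ in the image of $\phi_*$, so $(u_s, 0) = \phi_*(\rho) = (u_s\rho, -u_t\rho)$ for some $\rho \in \End_\Kc(\Xb[-1]) = \End_\Kc(\Xb)$; equivalently $u_s(1-\rho) = 0$ and $u_t\rho = 0$. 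Because $A$ is Henselian and $\Xb$ is indecomposable, $\End_\Kc(\Xb)$ is local, so $\rho$ is either a unit (forcing $u_t = 0$) or lies in the Jacobson radical (so $1-\rho$ is a unit, forcing $u_s = 0$). Both possibilities contradict $u_s, u_t \neq 0$, and hence $u_l \neq 0$.

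With $u_l \neq 0$, I would decompose $\Ub = \Ub(1) \oplus \cdots \oplus \Ub(q)$ into indecomposables in the Krull-Schmidt category $\Kc$ and pick an index $i$ with $\pi_i u_l \neq 0$, where $\pi_i \colon \Ub \to \Ub(i)$ is the projection. Setting $\Ub_l := \Ub(i)$, $u_l' := \pi_i u_l$, and completing $u_l'$ to a triangle
\[
l \colon \Xb[-1] \xrightarrow{u_l'} \Ub_l \to \Kb_l \to \Xb,
\]
gives $l \in \Sc(\Xb)$. The composite $\pi_i \alpha \colon \Ub_s \to \Ub_l$ satisfies $(\pi_i \alpha)\, u_s = \pi_i u_l = u_l'$, so axiom (TR3) extends $(1_{\Xb[-1]}, \pi_i \alpha)$ to a morphism of triangles $s \to l$ that is the identity on both endpoints, giving $s > l$. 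The same argument with $\pi_i \beta$ yields $t > l$.

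The main obstacle is the nonvanishing claim for $u_l$: once that is in hand, the remaining steps are formal consequences of Krull-Schmidt and the axioms of triangulated categories. The nonvanishing relies crucially on the Henselian hypothesis on $A$, via the locality of $\End_\Kc(\Xb)$.
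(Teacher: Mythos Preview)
Your argument is correct. The paper takes the dual construction: rather than forming the ``pushout'' triangle
\[
\Xb[-1]\xrightarrow{(u_s,-u_t)}\Ub_s\oplus\Ub_t\to\Ub\to\Xb
\]
from the second terms $\Ub_s,\Ub_t$, it builds the triangle
\[
\alpha\colon\ \Xb[-1]\xrightarrow{v}\Vb\to\Kb_s\oplus\Kb_t\xrightarrow{(p_s,p_t)}\Xb
\]
from the third terms $\Kb_s,\Kb_t$, and proves that $(p_s,p_t)$ is not a retraction (so $v\neq 0$) by the same local-endomorphism-ring trick you use: if $p_sq_s+p_tq_t=1_{\Xb}$ then one of the summands is a unit in $\End_\Kc(\Xb)$, forcing $p_s$ or $p_t$ to be a retraction. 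From that point both proofs run in parallel: decompose the auxiliary object into indecomposables, project onto a summand where the relevant map is nonzero, and fill in the morphism of triangles. Your approach makes the maps $s\to l$ and $t\to l$ appear directly via $\pi_i\alpha$ and $\pi_i\beta$, while the paper factors through the intermediate triangle $\alpha$ using the obvious inclusions $\Kb_s\hookrightarrow\Kb_s\oplus\Kb_t$; on the other hand the paper's nonvanishing step is slightly more elementary, since ``not a retraction'' is checked directly without invoking the long exact sequence.
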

\begin{proof}
  We write
  \begin{align*}
    s &\colon \Xb[-1] \xrightarrow{u_s} {\Ub}_s \rt {\Kb}_s \xrightarrow{p_s} \Xb \\
    t &\colon \Xb[-1] \xrightarrow{u_t} {\Ub}_t \rt {\Kb}_t \xrightarrow{p_t}  \Xb
  \end{align*}
  \emph{Claim}: The map $(p_s, p_t) \colon {\Kb}_s \oplus {\Kb}_t \rt \Xb$ is not a retraction. \\
  Otherwise there exists $q_s \colon \Xb \rt {\Kb}_s$ and   $q_t \colon \Xb \rt {\Kb}_t$      with
  $$ p_s \circ q_s + p_t \circ q_t = 1_{\Xb}.$$
  As $\Xb$ is indecomposable this implies that $p_s\circ q_s$ or $p_t\circ q_t$ is an automorphism on $\Xb$. This implies that $p_s$ or $p_t$ is a retraction. So $u_s = 0$ or $u_t = 0$ which is a contradiction.

  Consider the triangle
  $$ \alpha \colon \Xb[-1] \xrightarrow{v} \Vb \rt {\Kb}_s \oplus {\Kb}_t  \xrightarrow{(p_s,p_t)} \Xb.$$
  As $(p_s, p_t)$ is not a retraction we get that $v \neq 0$.
  Say $\Vb = \Vb(1)\oplus \Vb(2) \cdots \oplus \Vb(r)$ with $\Vb(i)$ indecomposable in $\Kc$ for all $i$. Let $v = (v_1,\ldots, v_r)$ along this decomposition. As $v\neq 0$ some $v_i \neq 0$. We note that $v_i$ induces a triangle
  $$ l\colon \Xb[-1] \xrightarrow{v_i} \Vb(i) \rt {\Kb}_l \rt \Xb.$$
with $l \in \Sc(\Xb)$. We have a map $\alpha  \rt  l$,
\[
  \xymatrix
{
 \Xb[-1]
\ar@{->}[r]^{v}
\ar@{->}[d]^{j}
 & {\Vb}
\ar@{->}[r]
\ar@{->}[d]^{\pi_i}
& {\Kb}_s \oplus {\Kb}_t
\ar@{->}[r]^{(p_s, p_t)}
\ar@{->}[d]
&\Xb
\ar@{->}[d]
\\
 \Xb[-1]
\ar@{->}[r]^{v_i}
 & {\Vb}(i)
\ar@{->}[r]
& {\Kb}_l
    \ar@{->}[r]
    &\Xb
\
 }
\]
Here $j$ is the identity map and $\pi$ is the obvious projection.
We also have a map $s\rt \alpha$,
\[
  \xymatrix
{
 \Xb[-1]
\ar@{->}[r]
\ar@{->}[d]^{j}
 & {\Ub}_s
\ar@{->}[r]
\ar@{->}[d]
& {\Kb}_s
\ar@{->}[r]^{p_s}
\ar@{->}[d]^{i}
&\Xb
\ar@{->}[d]
\\
 \Xb[-1]
\ar@{->}[r]
 & {\Vb}
\ar@{->}[r]
& {\Kb}_s \oplus {\Kb}_t
    \ar@{->}[r]
    &\Xb
\
 }
\]
where $j$ is the identity map and $i$ is the obvious inclusion.
Composing $s\rt \alpha$ with $\alpha \rt l$ we obtain a map $s\rt l$. So $s  > l$. Similarly $t > l$. The result follows.
\end{proof}

Thus by the \ref{direct} and \ref{partial} if there is a minimum element in $\Sc(\Xb)$ it is unique upto isomorphism of triangles.

The utility of the construction of $\Sc(\Xb)$ is that it gives a convenient way to handle left AR-triangles.
We show
\begin{theorem}
  \label{l-ar-char}
  Let $s \colon \Xb[-1] \xrightarrow{w} \Ub \rt \Kb \rt \Xb$ be a triangle in $\Kc$. Then the following conditions are equivalent:
  \begin{enumerate}[\rm(1)]
    \item $s$ is a left AR-triangle starting at $\Ub$.
    \item $s$ is the minimal element of $\Sc(\Xb)$.
  \end{enumerate}
\end{theorem}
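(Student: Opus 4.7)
The plan is to prove the two implications separately. The direction $(2) \Rightarrow (1)$ is a direct extraction of (LAR3) from minimality, while $(1) \Rightarrow (2)$ is best routed through the directedness of $\Sc(\Xb)$ (Lemma \ref{direct}) rather than through a direct comparison with an arbitrary $t \in \Sc(\Xb)$.

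For $(2) \Rightarrow (1)$, assume $s$ is the minimum of $\Sc(\Xb)$. The conditions (LAR1) and (LAR2) are immediate: $\Ub$ is indecomposable and $w \neq 0$ because $s \in \Sc(\Xb)$, while $\Xb$ is indecomposable by the standing setup \ref{setup-y}. For (LAR3), fix an indecomposable $\Db$ and a non-isomorphism $\phi \colon \Ub \rt \Db$, and suppose toward contradiction that $\phi \circ w \neq 0$. Extending $\phi \circ w$ to a triangle produces $l \colon \Xb[-1] \xrightarrow{\phi \circ w} \Db \rt \cone(\phi \circ w) \rt \Xb$ lying in $\Sc(\Xb)$. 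Minimality of $s$ supplies a morphism of triangles $l \rt s$ fixing $\Xb[-1]$, hence a map $\gamma \colon \Db \rt \Ub$ satisfying $\gamma \circ \phi \circ w = w$. Thus $(1_\Ub - \gamma \circ \phi) \circ w = 0$; locality of $\operatorname{End}(\Ub)$ forces $\gamma \circ \phi$ to be invertible, for otherwise $1_\Ub - \gamma \circ \phi$ would be a unit and $w = 0$. So $\phi$ is a split monomorphism, and the standard idempotent argument in the local ring $\operatorname{End}(\Db)$ upgrades this to $\phi$ being an isomorphism, contradicting our choice.

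For $(1) \Rightarrow (2)$, note first that $s \in \Sc(\Xb)$ trivially. To show minimality, fix any $t \in \Sc(\Xb)$; by Lemma \ref{direct} there exists $l \in \Sc(\Xb)$ with $s > l$ and $t > l$. Unpacking $s > l$ produces a morphism of triangles $(1_{\Xb[-1]}, \alpha, \beta, 1_\Xb) \colon s \rt l$, whose left square reads $u_l = \alpha \circ w$. Since $u_l \neq 0$ we have $\alpha \circ w \neq 0$, and as $\Ub_l$ is indecomposable the contrapositive of (LAR3) applied to $\phi = \alpha$ forces $\alpha$ to be an isomorphism. The two-out-of-three property for morphisms of triangles then makes $\beta$ an isomorphism, upgrading $s > l$ to an isomorphism $s \cong l$ of triangles. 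Composing the inverse $l \rt s$ with the morphism $t \rt l$ given by $t > l$ yields a morphism $t \rt s$ fixing $\Xb[-1]$, i.e., $t > s$. Since $t$ was arbitrary, $s$ is the minimum.

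The main obstacle is the direction $(1) \Rightarrow (2)$: one cannot hope to produce a morphism $t \rt s$ directly from (LAR3), since (LAR3) only controls maps out of $\Ub$, not comparison maps into $\Ub$ coming from an arbitrary triangle ending at $\Xb$. Directedness of $\Sc(\Xb)$ circumvents this, because once we have $s > l$ with $l \in \Sc(\Xb)$, locality of $\operatorname{End}(\Ub_l)$ combined with (LAR3) rigidifies the comparison into an isomorphism, which then transfers $t > l$ into $t > s$.
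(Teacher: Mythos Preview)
Your proof is correct, but both directions are organized a bit differently from the paper's, and in one case the paper's route is shorter.

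For $(2)\Rightarrow(1)$ the paper, after forming the triangle $l$ on $\phi\circ w$, observes that the comparison $(1,\phi,\ast,1)\colon s\rt l$ already gives $s>l$; since $s$ is the minimum one also has $l>s$, and Lemma~\ref{partial} then yields $s\cong l$, forcing $\phi$ to be an isomorphism. Your argument uses only the morphism $l\rt s$ coming from the minimum property and then runs the local-ring computation by hand; this is exactly the content of Lemma~\ref{partial}, so the two arguments are essentially the same, yours just unpacked.

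For $(1)\Rightarrow(2)$ the paper does \emph{not} go through directedness. It proves instead the a priori weaker ``minimal'' property: if $s>l$ with $l\in\Sc(\Xb)$, then the comparison map $\alpha\colon\Ub\rt\Ub_l$ satisfies $u_l=\alpha\circ w\neq 0$, so by (LAR3) $\alpha$ is an isomorphism and hence $s\cong l$. That is the whole argument. Directedness (Lemma~\ref{direct}) then upgrades ``minimal'' to ``minimum'' after the fact. Your detour through Lemma~\ref{direct} is perfectly valid and in fact makes the passage from minimal to minimum explicit, but your diagnosis that ``one cannot hope to produce a morphism $t\rt s$ directly from (LAR3)'' overlooks this simpler strategy: you don't need to produce $t\rt s$ for arbitrary $t$ if you are content to first show that nothing lies strictly below $s$.
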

\begin{proof}
  We first prove (2) $\implies $ (1).
  By construction of $\Sc(\Xb)$ we get that $\Xb, \Ub$ are indecomposable and $w\neq 0$.
   Let $\Db \in \Kc$ be  indecomposable and let $t \colon \Ub \rt \Db$ be a non-isomorphism. Then we have a morphism of triangles
  \[
  \xymatrix
{
 \Xb[-1]
\ar@{->}[r]^{w}
\ar@{->}[d]^{j}
 & {\Ub}
\ar@{->}[r]
\ar@{->}[d]^{t}
& {\Kb}
\ar@{->}[r]
\ar@{->}[d]
&\Xb
\ar@{->}[d]
\\
 \Xb[-1]
\ar@{->}[r]^{t\circ w}
 & {\Db}
\ar@{->}[r]
& \Pb
    \ar@{->}[r]
    &\Xb
\
 }
\]
where $j$ is the identity map and $\Pb = \cone(t\circ w)$. Let $l$ denote the lower triangle above. If $t\circ w \neq 0$ then $l \in \Sc(\Xb)$. Furthermore $s > l$. But $s$ is the minimal element in $\Sc(\Xb)$. So $l > s$. By \ref{partial} we get $s \cong l$. So $t$ is an isomorphism, a contradiction.

We now prove (1) $\implies$ (2).
Suppose if possible $s$ is not minimal element in $\Sc(\Xb)$. Then $s > l$ and $s \ncong l$ for some $l \in \Sc(\Xb)$. So we have
\[
  \xymatrix
{
 \Xb[-1]
\ar@{->}[r]^{w}
\ar@{->}[d]^{j}
 & {\Ub}
\ar@{->}[r]
\ar@{->}[d]^{t}
& {\Kb}
\ar@{->}[r]
\ar@{->}[d]
&\Xb
\ar@{->}[d]
\\
 \Xb[-1]
\ar@{->}[r]^{u_l}
 & {\Ub}_l
\ar@{->}[r]
& {\Kb}_l
    \ar@{->}[r]
    &\Xb
\
 }
\]
where $j$ is the identity map. So we get $u_l = t \circ w$. But $u_l \neq 0$ and $t$ is a non-isomorphism. This contradicts the fact that $s$ is a left AR-triangle.
\end{proof}
Next we show
\begin{theorem}
  \label{almost}
  Let  $s \colon \Xb[-1] \xrightarrow{w} \Ub \rt \Kb \rt \Xb$ be a left AR-triangle in $\Kc$. Then $H^*(\Xb)$ has finite length.
\end{theorem}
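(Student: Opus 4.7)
The plan is to apply Corollary \ref{hl-lemma}. Since $r^n u = u \circ (r^n \cdot 1_\Xb)$ for every $u \in \operatorname{End}_\Kc(\Xb)$, it suffices to show that for each $r \in \m$ there is some $n \geq 1$ with $r^n \cdot 1_\Xb = 0$ in $\Kc$.

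The first step I would carry out is to extract the relation $r w = 0$ in $\Kc$ for every $r \in \m$. Because $\Ub$ is indecomposable and $A$ is Henselian, $\operatorname{End}_\Kc(\Ub)$ is a local ring. Taking $\Ub$ to be a minimal complex (possible up to isomorphism in $\Kc$), a standard Nakayama argument shows that $r \cdot 1_\Ub$ is a non-isomorphism for every $r \in \m$: otherwise $1_\Ub$ would be null-homotopic modulo $\m$, forcing $\Ub/\m\Ub = 0$ and hence $\Ub = 0$. Applying LAR3 to the non-isomorphism $t := r \cdot 1_\Ub \colon \Ub \to \Ub$ then yields $r w = t \circ w = 0$ in $\Kc$.

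Next I rotate the triangle to $\Ub \xrightarrow{f} \Kb \xrightarrow{g} \Xb \xrightarrow{\tilde{w}} \Ub[1]$ (with $\tilde{w}$ a shift of $\pm w$) and apply $\Hom_\Kc(\Xb, -)$. The resulting exact sequence
\[
\Hom_\Kc(\Xb, \Kb) \xrightarrow{g_*} \operatorname{End}_\Kc(\Xb) \xrightarrow{\tilde{w}_*} \Hom_\Kc(\Xb, \Ub[1]),
\]
combined with $\m \tilde{w} = 0$, forces $r \cdot 1_\Xb \in \ker(\tilde{w}_*) = \image(g_*)$, so $r \cdot 1_\Xb = g \circ q$ for some $q \colon \Xb \to \Kb$.

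The central difficulty is to upgrade this single factorization to nilpotency $r^n \cdot 1_\Xb = 0$. The naive iteration fails because $r q$ need not factor through $f$ unless $r^2 \cdot 1_\Xb$ already vanishes. To overcome this I would combine the directedness of $\Sc(\Xb)$ from Lemma \ref{direct} with the characterization of $s$ as its minimum element given by Theorem \ref{l-ar-char}, together with the cone comparison supplied by Lemma \ref{cone-center} applied to multiplication by $r$ on the map $w$. Concretely, a putative infinite chain of non-vanishing $r^n \cdot 1_\Xb$ would, via the corresponding pullback triangles and cone comparisons, produce an element of $\Sc(\Xb)$ strictly below $s$ in the partial order, contradicting the minimality of $s$. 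Once this contradiction is made precise, Corollary \ref{hl-lemma} delivers the conclusion.
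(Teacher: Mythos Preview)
Your first two steps are fine: $\m w = 0$ follows from LAR3 (multiplication by $r \in \m$ is never an isomorphism on a nonzero indecomposable, by Nakayama on cohomology), and hence each $r \cdot 1_{\Xb}$ factors through $g$. But the final paragraph is not a proof; it is a hope. You never say what the ``corresponding pullback triangles'' are, how Lemma~\ref{cone-center} is to be used, or which element of $\Sc(\Xb)$ you intend to build. The obvious candidates --- the triangles on $r^n w$ --- are all split because you have already shown $r^n w = 0$, so they do not lie in $\Sc(\Xb)$; and nothing can sit ``strictly below $s$'', since by Theorem~\ref{l-ar-char} $s$ is the minimum. I do not see how to salvage the argument along these lines: the information $\m w = 0$ is simply too weak to force $r^n \cdot 1_{\Xb} = 0$.

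The paper argues in the opposite direction and never invokes Corollary~\ref{hl-lemma}. Assuming a non-maximal prime $P$ lies in the support of $H^*(\Xb)$, one takes an exact sequence $0 \to \Vb \to \Pb \to \Xb \to 0$ with $\Pb$ contractible, giving a triangle $\Xb[-1] \xrightarrow{v} \Vb \to \Pb \to \Xb$. Localizing at $P$ shows some indecomposable component $(v_i)_P \neq 0$, so for any $\alpha \in \m \setminus P$ the maps $\alpha^n v_i$ are all nonzero and yield triangles $q_n \in \Sc(\Xb)$. Minimality of $s$ then gives morphisms $q_n \to s$, whence $w = \alpha^n(f_n \circ v_i) \in \m^n \Hom_{\Kc}(\Xb[-1],\Ub)$ for every $n$, and Krull's intersection theorem forces $w = 0$. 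The missing idea in your approach is that one must \emph{manufacture} nonzero elements of $\Sc(\Xb)$ --- via the auxiliary syzygy $\Vb$ and localization --- and then use minimality to push $w$ arbitrarily deep into the $\m$-adic filtration, rather than trying to kill powers of $r$ on $\End_{\Kc}(\Xb)$.
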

\begin{proof}
  By \ref{l-ar-char} we get that $s$ is the minimal element of $\Sc(\Xb)$. Suppose if possible $H^*(\Xb)$ does not have finite length. Let $P$ be a non-maximal prime such that
  $H^*(\Xb)_P \neq 0$. So $H^*({\Xb}_P) \neq 0$.
  Let $t \colon 0 \rt \Vb \rt \Pb \rt \Xb \rt 0$ be a short exact sequence in $C^b(\md(A))$ with $\Pb$ projective. It is well known that $\Pb$ is a finite direct sum of shifts of complex of the form $0 \rt A\xrightarrow{1}  A \rt 0$. Thus $t$ is a short exact sequence in $C^b(\proj A)$. So we have a triangle
  in $\Kc $
  $$ l\colon \Xb[-1] \xrightarrow{v} \Vb \rt \Pb \rt \Xb.$$
  By considering $t_P$ it follows that ${\Vb}_P \neq 0$ in $\Kc(\proj A_P)$. Note we have a triangle
  $$l_P \colon {\Xb}_P[-1] \xrightarrow{v_P} {\Vb}_P \rt {\Pb}_P \rt {\Xb}_P.$$
  But $\Pb$ is a contractible complex and so is ${\Pb}_P$. If $v_P$ is zero then $l_P$ splits and this forces ${\Vb}_P = 0$ a contradiction.

  Say $\Vb = \Vb(1)\oplus \Vb(2)\oplus \cdots \oplus \Vb(r)$ with $\Vb(i)$ indecomposable in $\Kc$ for all $i$. Let $v = (v_1,\ldots, v_r)$ along this decomposition. As $v_P\neq 0$ some $(v_i)_P \neq 0$. We note that $v_i$ induces a triangle in $\Sc(\Xb)$.
  $$ q \colon \Xb[-1] \xrightarrow{v_i} \Vb(i) \rt \Qb \rt \Xb.$$
  Let $\alpha \in \m \setminus P$. As $(v_i)_P \neq 0$ we get that $\alpha^n v_i \neq 0$. So for all $n \geq 1$ we have a triangle $q_n \in \Sc(\Xb)$ where
  $$ q_n \colon \Xb[-1] \xrightarrow{\alpha^n v_i} \Vb(i) \rt {\Qb}_n \rt \Xb.$$
 As $s$ is the minimal element in $\Sc(\Xb)$ we get that $q_n > \alpha$ for all $n$. So we have a morphism of triangles
 \[
  \xymatrix
{
 \Xb[-1]
\ar@{->}[r]^{\alpha^n v_i}
\ar@{->}[d]^{j}
 & {\Vb}(i)
\ar@{->}[r]
\ar@{->}[d]^{f_n}
& {\Qb}_n
\ar@{->}[r]
\ar@{->}[d]
&\Xb
\ar@{->}[d]
\\
 \Xb[-1]
\ar@{->}[r]^{w}
 & {\Ub}
\ar@{->}[r]
& {\Kb}
    \ar@{->}[r]
    &\Xb
\
 }
\]
where $j$ is the identity map. Thus $w = f_n \circ \alpha^n v_i = \alpha^n(f_n \circ v_i)$. So $w \in \m^n \Hom_{\Kc}(\Xb[-1], \Ub)$ for all $n \geq 1$. By Krull's intersection theorem we get $w = 0$ which is a contradiction.
\end{proof}

We now give a proof of Theorem \ref{ar}. We restate it in a convenient form.
\begin{theorem}
  \label{ar-main}
   In $\Kc$ we have
   \begin{enumerate}[ \rm (1)]
     \item If  $s \colon \Xb[-1] \xrightarrow{w} \Ub \rt \Kb \rt \Xb$ is a left AR-triangle then $H^*(\Xb)$, $H^*(\Ub)$ and $H^*(\Kb)$ has finite length.
     \item If $t \colon \Ub \rt \Kb \rt \Xb  \xrightarrow{h} \Ub[1] $ is a right AR-triangle then $H^*(\Xb)$, $H^*(\Ub)$ and $H^*(\Kb)$ has finite length.
   \end{enumerate}
   \end{theorem}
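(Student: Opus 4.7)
\emph{Strategy.} Theorem \ref{almost} already produces the main ingredient: for any left AR-triangle $s\colon \Xb[-1]\xrightarrow{w}\Ub\rt\Kb\rt\Xb$ in $\Kc$, the \emph{end} object $\Xb$ has finite length cohomology. To bootstrap this into finite-length cohomology for every object in any (left or right) AR-triangle, the plan is to shuttle between left and right AR-triangles using Lemma \ref{ar-dual} (duality via $\Dc=\cHom_A(-,A)$) and Proposition \ref{right-to-left} (rotation of a right AR-triangle into a left one), reapplying \ref{almost} after each transition. The cohomology of the middle object is handled last, via the long exact sequence of the triangle.

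\emph{Proof of (1).} Starting from $s$, Theorem \ref{almost} gives $\ell(H^*(\Xb))<\infty$, hence also $\ell(H^*(\Xb^*))<\infty$ by \ref{dual}(3). Lemma \ref{ar-dual}(2) then converts $s$ into a right AR-triangle $\Xb^*\rt\Kb'\rt\Ub^*\xrightarrow{w^*}\Xb^*[1]$ ending at $\Ub^*$. Finite length of $H^*(\Xb^*)$ will imply (see the last paragraph) that $\Hom_\Kc(\Xb^*,\Xb^*)$ is Artinian, so Proposition \ref{right-to-left} rotates this right AR-triangle into a left AR-triangle $\Ub^*[-1]\xrightarrow{-w^*[-1]}\Xb^*\rt\Kb'\rt\Ub^*$ starting at $\Xb^*$. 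A second application of Theorem \ref{almost} yields $\ell(H^*(\Ub^*))<\infty$, and hence $\ell(H^*(\Ub))<\infty$ by \ref{dual}(3). The long exact cohomology sequence associated to $s$ then forces $\ell(H^*(\Kb))<\infty$.

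\emph{Proof of (2).} Given a right AR-triangle $t\colon \Ub\rt\Kb\rt\Xb\xrightarrow{h}\Ub[1]$, Lemma \ref{ar-dual}(1) produces a left AR-triangle $\Ub^*[-1]\xrightarrow{h^*}\Xb^*\rt\Kb''\rt\Ub^*$ starting at $\Xb^*$. Theorem \ref{almost} applied to this triangle yields $\ell(H^*(\Ub^*))<\infty$, and hence $\ell(H^*(\Ub))<\infty$. Now $\Hom_\Kc(\Ub,\Ub)$ is Artinian, so Proposition \ref{right-to-left} converts $t$ itself into a left AR-triangle $\Xb[-1]\rt\Ub\rt\Kb\rt\Xb$. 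A final application of Theorem \ref{almost} gives $\ell(H^*(\Xb))<\infty$, and the long exact sequence of $t$ gives $\ell(H^*(\Kb))<\infty$.

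\emph{The one delicate point.} All of the above hinges on the claim that $\ell(H^*(\Yb))<\infty$ (for $\Yb\in\Kc$) implies $\Hom_\Kc(\Yb,\Yb)$ is Artinian. This is standard material already present in Section 2: by \ref{chom-p} we have $\cHom_A(\Yb,\Yb)_P\cong \cHom_{A_P}(\Yb_P,\Yb_P)$, and for every non-maximal prime $P$ finite-length cohomology of $\Yb$ makes $\Yb_P$ a bounded exact complex of finitely generated free $A_P$-modules, hence contractible; so $\Hom_\Kc(\Yb,\Yb)_P=0$ for $P\neq\m$, and since $\Hom_\Kc(\Yb,\Yb)=H^0(\cHom_A(\Yb,\Yb))$ is a finitely generated $A$-module (as $\Yb$ is bounded), it has finite length and is therefore Artinian. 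With this observation the proof reduces to a chain of direct applications of \ref{almost}, \ref{ar-dual} and \ref{right-to-left}, as indicated above.
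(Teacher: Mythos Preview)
Your proof is correct and follows essentially the same route as the paper's: apply Theorem~\ref{almost} to get finite length cohomology at one end, use duality (Lemma~\ref{ar-dual}) and rotation (Proposition~\ref{right-to-left}) to produce another left AR-triangle, apply \ref{almost} again, and finish with the long exact sequence. The only cosmetic differences are that in part~(1) the paper rotates first (via \ref{left-to-right}, whose proof in \ref{our-int} is exactly your dualize-then-rotate argument) and then dualizes, whereas you dualize first and then rotate; and in part~(2) the paper simply invokes the already-proved part~(1) on the dual triangle rather than reapplying \ref{almost} and \ref{right-to-left} from scratch.
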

   \begin{proof}
(1) By \ref{almost}  we get that $H^*(\Xb)$ has finite length. So $\Hom_\Kc(\Xb, \Xb)$ is Artin local. From \ref{left-to-right} we have a right AR-triangle
$$ p \colon \Ub \rt \Kb \rt \Xb  \xrightarrow{-w[1]} \Ub[1]. $$
By \ref{ar-dual} we have a left AR-triangle
$$  \Ub^*[-1] \xrightarrow{-w^*[-1]} \Xb^* \rt \Qb \rt \Ub^*.                   $$
By \ref{almost} again we get that $H^*(\Ub^*)$ has finite length. By \ref{dual} we get that $H^*(\Ub)$ has finite length. By taking the long exact sequence of cohomology of $s$ we get that $H^*(\Kb)$ has finite length.

(2) By \ref{ar-dual} we have a left AR-triangle
$$ \Ub^*[-1] \xrightarrow{h^*} \Xb^* \rt \Pb \rt \Ub^*.$$
By (1) we get that $H^*(\Ub^*)$ and $H^*(\Xb^*)$ has finite length. By \ref{dual} we get that $H^*(\Ub), H^*(\Xb)$ has finite length. By taking the long exact sequence of cohomology of $t$ we get that $H^*(\Kb)$ has finite length.
   \end{proof}

\section{An analogue to Miyata's theorem}
In this section we prove our analogue to Miyata's theorem.
We will need the following well-known result
\begin{lemma}\label{m-Lemma}
Let $\Ga$ be a left  Noetherian ring. Let $\Xb, \Fb, \Gb \in K^{-,b}(\proj \Ga)$. We have
\begin{enumerate}[\rm (1)]
  \item If $H^i(\Xb) = 0$ for all $i \in \Z$ then $\Xb = 0$.
  \item Let $u \colon \Fb \rt \Gb$. If $H^i(u)$ is an isomorphism for all $i \in \Z$ then $u$ is an isomorphism.
\end{enumerate}
\end{lemma}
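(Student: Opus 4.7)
The plan is to prove part (1) by constructing a contracting homotopy directly, working downwards from the top of the complex, and then to deduce part (2) by applying (1) to the mapping cone of $u$.

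For part (1), I take a representative of $\Xb$ with $\Xb^i = 0$ for $i > N$ and each $\Xb^i$ a finitely generated projective $\Ga$-module. The vanishing $H^i(\Xb) = 0$ for all $i$ means that $\Xb$ is an exact complex. I would construct maps $s^i \colon \Xb^i \rt \Xb^{i-1}$ satisfying $\partial^{i-1} \circ s^i + s^{i+1} \circ \partial^i = 1_{\Xb^i}$ by downward induction on $i$. At the top degree $i = N$, exactness gives surjectivity of $\partial^{N-1} \colon \Xb^{N-1} \rt \Xb^N$, and projectivity of $\Xb^N$ yields a splitting $s^N$ with $\partial^{N-1} s^N = 1$. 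Inductively, assuming $s^{i+1}, s^{i+2}, \ldots$ have been constructed, a short computation using the previously established homotopy relations shows that $\partial^i \circ (1_{\Xb^i} - s^{i+1}\partial^i) = 0$, so the image of $1_{\Xb^i} - s^{i+1}\partial^i$ lies in $\ker \partial^i = \image \partial^{i-1}$ by exactness. Projectivity of $\Xb^i$ then lifts this map to the desired $s^i \colon \Xb^i \rt \Xb^{i-1}$. This produces a null-homotopy $1_{\Xb} \simeq 0$, proving $\Xb = 0$ in $K^{-,b}(\proj \Ga)$.

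For part (2), I would form the mapping cone $\cone(u)$, which again belongs to $K^{-,b}(\proj \Ga)$ since $\cone(u)^n = \Fb^{n+1} \oplus \Gb^n$ is bounded above with finitely generated projective terms. The distinguished triangle $\Fb \xrightarrow{u} \Gb \rt \cone(u) \rt \Fb[1]$ and the long exact cohomology sequence, combined with the hypothesis that $H^i(u)$ is an isomorphism for every $i$, force $H^i(\cone(u)) = 0$ for all $i$. Applying part (1) to $\cone(u)$ gives $\cone(u) \cong 0$ in $K^{-,b}(\proj \Ga)$, and it is a standard fact in any triangulated category that a morphism whose cone is a zero object is itself an isomorphism; hence $u$ is an isomorphism.

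There is no serious obstacle in this argument. The only delicate point is the verification in the inductive step of part (1) that $1_{\Xb^i} - s^{i+1}\partial^i$ factors through $\image \partial^{i-1}$, which is where exactness and the previously constructed homotopy are used together; everything else reduces to formal properties of projective modules and the mapping cone.
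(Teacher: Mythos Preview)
Your proof is correct; the paper itself does not supply a proof of this lemma, stating only that it is well-known. The argument you give---splitting an acyclic bounded-above complex of projectives by downward induction, and then reducing (2) to (1) via the mapping cone---is the standard one, and all the verifications (in particular the computation $\partial^i s^{i+1}\partial^i = \partial^i$ using the homotopy relation one step up together with $\partial^{i+1}\partial^i = 0$) go through exactly as you indicate.
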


We now give
\begin{proof}[ Proof of Theorem \ref{Miyata-proj}]
Let $\theta = (\alpha, \beta) \colon \Wb \rt \Ub \oplus \Vb$ be an isomorphism.
Let $t \colon \Ub \rt \Ub \oplus \Vb \rt \Vb \xrightarrow{0} \Ub[1]$ be the obvious split triangle.

\[
  \xymatrix
{
 \Ub
\ar@{->}[r]^{u}
\ar@{->}[d]^{\alpha \circ u}
 & \Wb
\ar@{->}[r]
\ar@{->}[d]^{\theta}
& \Vb
\ar@{->}[r]
&\Ub[1]
\ar@{->}[d]^{\alpha \circ u[1]}
\\
 \Ub
\ar@{->}[r]^{i}
 & \Ub\oplus \Vb
\ar@{->}[r]^{\pi}
& \Vb
    \ar@{->}[r]^{0}
    &\Ub[1]
\
 }
\]
We note that $i \circ (\alpha\circ u) =\theta \circ u$. So there exists $h \colon \Vb \rt \Vb$ which induces a map of triangles $s \rt t$.
Taking cohomology we get
 a commutative diagram
\[
  \xymatrix
{
 H^i(\Ub)
\ar@{->}[r]
\ar@{->}[d]
 & H^i(\Wb)
\ar@{->}[r]
\ar@{->}[d]^{H^i(\theta)}
& H^i(\Vb)
\ar@{->}[r]
\ar@{->}[d]^{H^i(h)}
&H^{i+1}(\Ub)
\ar@{->}[d]^{\alpha \circ u[1]}
\\
 H^i(\Ub)
\ar@{->}[r]
 & H^i(\Ub\oplus \Vb)
\ar@{->}[r]^{H^i(\pi)}
& H^i(\Vb)
    \ar@{->}[r]^{0}
    &H^{i+1}(\Ub)
\
 }
\]
As $H^i(\theta)$ is an isomorphism and $H^i(\pi)$ is surjective a simple diagram chase shows that $H^i(h)$ is surjective. As $H^i(\Vb)$ is finitely generated $\Ga$-module and as $\Ga$ is left  Noetherian we get that $H^i(h)$ is an isomorphism. By \ref{m-Lemma} we get that $h$ is an isomorphism. It follows that $\alpha \circ u$ is also an isomorphism.
Thus the map of triangles $s \rt t$ is an isomorphism. A simple diagram chase shows that $v \colon \Vb \rt \Ub[1]$ is zero.
\end{proof}
\section{Proof of Theorem \ref{huneke-leu}}
In this section we give
\begin{proof}[Proof of Theorem \ref{huneke-leu}]
Let $u \in \Hom(\Xb, \Xb)$ be any. Let $r \in \m$ be any. Set $\Kb(n) = \cone(r^nu)$.
We have triangle
\[
\Xb    \xrightarrow{r^nu}     \Xb      \rt    \Kb(n)          \rt            \Xb[1]
\]
We note that $\Kb(n) \in \Tc$. Furthermore  $\width \Kb(n) \leq \width \Xb + 1$ and  \\ $\rank \Kb(n) \leq 2 \rank \Xb$. So by our assumption we get that $\Kb(n) \cong \Kb(m) $  in $K^b(\proj A)$ for some $n < m$. Set $v = r^nu$ and $t = r^{m-n}$. We will show $v = 0$. By \ref{hl-lemma} it follows that $H^*(\Xb)$ has finite length.
Note by \ref{cone-center} we have a  commutative diagram in $C^{b}(\md(A))$
\[
  \xymatrix
{
 0
 \ar@{->}[r]
  & \Xb
\ar@{->}[r]^{i}
\ar@{->}[d]^{t}
 & \Kb(n)
\ar@{->}[r]
\ar@{->}[d]^{\psi}
& \Xb[1]
\ar@{->}[r]
\ar@{->}[d]^{j}
&0
\\
 0
 \ar@{->}[r]
  & \Xb
\ar@{->}[r]
 & \Kb(m)
\ar@{->}[r]
& \Xb[1]
    \ar@{->}[r]
    &0
\
 }
\]
where $j$ is the identity map. We now note that $C^{b}(\md(A))$ is an abelian category. It follows that $\Kb(m)$ is the pushout of $i$ and the multiplication map on $\Xb$ by $t$.
So we have
$$ \Kb(m) \cong \frac{\Xb \oplus \Kb(n)}{\{(tx,-i(x)) \mid x \in \Xb \} }.$$
So we have an exact sequence in $C^{b}(\md(A))$,
$$ 0 \rt       \Xb          \xrightarrow{ (t,-i)}          \Xb \oplus \Kb(n)        \rt             \Kb(m)           \rt 0.$$
The above exact sequence consists of objects in $C^b(\proj A)$.  So we have a triangle in $K^b(\proj A)$
$$ \Xb          \xrightarrow{ (t,-i)}          \Xb \oplus \Kb(n)        \rt             \Kb(m)           \xrightarrow{g}  \Xb[1].$$
As $\Kb(n)    \cong       \Kb(m)$ in     $K^b(\proj A)$ it follows by our analogue of Miyata's theorem that $g = 0$ and so the above triangle is split.
Using the functor $\Hom(\Xb[1],-)$ on the above triangle we get that  the map
\[
\theta\colon \Hom(\Xb[1],\Xb[1])         \xrightarrow{(t^*[1],-i^*[1])}       \Hom(\Xb[1], \Xb[1] \oplus \Kb(n)[1])
\]
is a split injection and denote it's  left inverse by $\delta$.

We also have a triangle in $K^b(\proj A)$
$$ \Xb     \xrightarrow{v}                \Xb           \xrightarrow{i}             \Kb(n)        \rt     \Xb[1].             $$
Using the functor $\Hom(\Xb[1],-)$ on the above triangle we get an exact sequence
$$ \Hom(\Xb[1],\Xb[1]) \xrightarrow{v[1]^*} \Hom(\Xb[1],\Xb[1])   \xrightarrow{i^*[1]}    \Hom(\Xb[1],  \Kb(n)[1]).                                $$
It follows that
$$ i^*[1](v[1]) = (i^*[1]\circ v^*[1])(1_{\Xb[1]}) = 0.$$
So we get
$$ \theta(v[1]) = (tv[1],0).$$
Applying $\delta$ we get
\begin{align*}
  v[1] &=  t \delta(v[1], 0)\\
   &= t\delta( t \delta(v[1],0), 0)  \\
   &= t^2 \delta(  \delta(v[1],0), 0)
\end{align*}
Iterating we get  that $v[1]$ is infinitely divisible by $t$. So $v[1] = 0$. Thus $v = 0$.
\end{proof}

 \end{document}